\newtheorem{thm}{Theorem}[section]
\newtheorem{lem}[thm]{Lemma}
\theoremstyle{definition}
\newtheorem{dfn}{Definition}
\theoremstyle{remark}
\newtheorem*{rmk}{Remark}
\newdefinition{crl}{Corollary}
\newcommand{\de}{\mathrm{d}}
\newcommand{\bfb}{\mbox{\boldmath{$b$}}}
\newcommand{\bfx}{\mbox{\boldmath{$x$}}}
\newcommand{\bfn}{\mbox{\boldmath{$n$}}}
\newcommand{\bff}{\mbox{\boldmath{$f$}}}
\newcommand{\bfF}{\mbox{\boldmath{$F$}}}
\newcommand{\bfg}{\mbox{\boldmath{$g$}}}
\newcommand{\bfu}{\mbox{\boldmath{$u$}}}
\newcommand{\bfw}{\mbox{\boldmath{$w$}}}
\newcommand{\bfv}{\mbox{\boldmath{$v$}}}
\newcommand{\bfa}{\mbox{\boldmath{$a$}}}
\newcommand{\bfG}{\mbox{\boldmath{$G$}}}
\newcommand{\bfy}{\mbox{\boldmath{$y$}}}
\newcommand{\bfz}{\mbox{\boldmath{$z$}}}
\newcommand{\bfr}{\mbox{\boldmath{$r$}}}
\newcommand{\bfj}{\mbox{\boldmath{$j$}}}
\newcommand{\bfs}{\mbox{\boldmath{$s$}}}
\newcommand{\bfA}{\mbox{\boldmath{$A$}}}
\newcommand{\bfB}{\mbox{\boldmath{$B$}}}
\newcommand{\bfmu}{\mbox{\boldmath{$\mu$}}}
\newcommand{\bfphi}{\mbox{\boldmath{$\phi$}}}
\newcommand{\bfsigma}{\mbox{\boldmath{$\sigma$}}}
\newcommand{\bfvarphi}{\mbox{\boldmath{$\varphi$}}}
\newcommand{\bfxi}{\mbox{\boldmath{$\xi$}}}
\newcommand{\JZtext}[1]{#1}
\begin{document}

\begin{frontmatter}

%% Title, authors and addresses

%% use the tnoteref command within \title for footnotes;
%% use the tnotetext command for the associated footnote;
%% use the fnref command within \author or \address for footnotes;
%% use the fntext command for the associated footnote;
%% use the corref command within \author for corresponding author footnotes;
%% use the cortext command for the associated footnote;
%% use the ead command for the email address,
%% and the form \ead[url] for the home page:
%%
%% \title{Title\tnoteref{label1}}
%% \tnotetext[label1]{}
%% \author{Name\corref{cor1}\fnref{label2}}
%% \ead{email address}
%% \ead[url]{home page}
%% \fntext[label2]{}
%% \cortext[cor1]{}
%% \address{Address\fnref{label3}}
%% \fntext[label3]{}

\title{Some Properties of Strong Solutions
to Nonlinear Heat and Moisture Transport in Multi-layer Porous
Structures}

%% use optional labels to link authors explicitly to addresses:
%% \author[label1,label2]{<author name>}
%% \address[label1]{<address>}
%% \address[label2]{<address>}

\author[CIDEAS,KM]{\corref{cor1}Michal Bene\v{s}}
\cortext[cor1]{Corresponding address: Department of Mathematics,
Faculty of Civil Engineering, Czech Technical University in Prague,
Th\'{a}kurova 7, 166 29 Prague 6, Czech Republic}
\ead{xbenesm3@fsv.cvut.cz}

\author[FCEKSM]{Jan Zeman}

\address[CIDEAS]{Centre for Integrated Design of Advanced Structures,}
\address[KM]{Department of Mathematics,}
\address[FCEKSM]{Department of Mechanics,\\Faculty of Civil Engineering,\\
Czech Technical University in Prague,\\
Th\'{a}kurova 7, 166 29 Prague 6, Czech Republic}

\begin{abstract}
%% Text of abstract

The present paper deals with mathematical models of heat and
moisture transport in layered building envelopes. The study of such
processes generates a system of two doubly nonlinear evolution
partial differential equations with appropriate initial and boundary
conditions. The existence of the strong solution in two dimensions
on a (short) time interval is proven. The proof rests on regularity
results for elliptic transmission problem for isotropic composite
materials.

\end{abstract}

\begin{keyword}
%% keywords here, in the form: keyword \sep keyword

Initial-boundary value problems for second-order parabolic systems
\sep local existence, smoothness and regularity of solutions \sep
coupled heat and mass transport

%% MSC codes here, in the form: \MSC code \sep code
%% or \MSC[2008] code \sep code (2000 is the default)

\MSC 35K51 \sep 35A01 \sep 35B65

\end{keyword}

\end{frontmatter}

%% main text
%\section{}
%\label{}

%--------------------------------------------------------------------------

\section{Introduction}
Building envelopes, which act as barriers between the indoor and
outdoor environments, present a crucial component responsible for
the building's performance over the whole service life. In this
regard, an important requirement to achieve an energy-efficient
design is the assessment of the heat and moisture behavior of the
component when exposed to natural climatic conditions. This task can
hardly be accomplished by purely experimental means, mainly due to
the long-term character of environmental variations and the
transport processes involved. Therefore, a considerable research
effort has been devoted to the development of predictive models for
coupled heat and mass transfer in building materials, see
e.g.~\cite{CernyRovnanikova2002,Hens:2007} for historical overviews.

The major challenge in predicting the transport phenomena in
building components lies in their complex porous microstructure,
resulting in an intricate mechanism of moisture absorption from
surrounding environment. Here, the dominant physical processes
involve adsorption forces, attracting vapor phase molecules to solid
parts of the porous system, and capillary condensation in pores.
This needs to be complemented with non-linear dependence of thermal
conduction on temperature and water content. As a result,
engineering models of simultaneous heat and moisture transfer are
posed in the form of strongly coupled parabolic system with highly
non-linear coefficients. Discretization of these equations,
typically based on finite volume or finite element methods, then
provides the basis for numerous simulation tools used in engineering
practice, see e.g.~\cite{Kalagasidis:2007:IBPT} for a recent survey.
However, to our best knowledge, the qualitative properties of the
resulting systems remain largely unexplored.

The mathematical models of transport processes in porous composite
media consist of the balance equations, governing the conservation
of mass (moisture) and thermal energy, supplemented by the
appropriate boundary, transmission and initial conditions. This
system can be written in the form
\begin{equation}\label{eq01}
\frac{\partial B^j_{\ell}(\bfu_{\ell})}{\partial t} - \nabla \cdot
\bfA^j_{\ell}(\bfu_{\ell},\nabla\bfu_{\ell}) =
f^j_{\ell}(\bfu_{\ell}) \quad {\rm in}\, Q_{\ell T}, \quad j=1,2,
\end{equation}
with the nonlinear boundary conditions
\begin{equation}\label{eq02}
-\bfA^j_{\ell}(\bfu_{\ell},\nabla\bfu_{\ell})\cdot
\bfn_{\ell}(\bfx)=g^j_{\ell}(\bfx,t,u^j_{\ell}) \quad {\rm on}\,
S_{\ell T},   \quad j=1,2,
\end{equation}
the so-called transmission conditions
\begin{equation}\label{transmission_conditions_01}
\left\{
\begin{array}{rclll}
u^j_{\ell} &=& u^j_{m} & {\rm on }\; \Sigma_{m\ell}^T,&
\\
-\bfA^j_{\ell}(\bfu_{\ell},\nabla\bfu_{\ell})\cdot\bfn_{\ell}(\bfx)
&=&-\bfA^j_{m}(\bfu_{m},\nabla\bfu_{m})\cdot\bfn_{\ell}(\bfx) & {\rm
on}\;  \Sigma_{m\ell}^T,&
\end{array}\right.
\end{equation}
$j=1,2$, and the initial condition
\begin{equation}\label{eq03}
\bfu_{\ell}(\bfx,0)=\bfmu_{\ell}(\bfx) \quad {\rm in} \,
\Omega_{\ell} .
\end{equation}
Here, $\Omega$ represents a two-dimensional bounded domain with a
Lipschitz-continuous boundary $\Gamma=\partial\Omega$;
$\bfn=(n_1,n_2)$ denotes the outer unit normal to $\Gamma$. $\Omega$
consists of $M$ disjoint subdomains $\Omega_{\ell}$ with
boundary $\partial \Omega_{\ell}$, $\ell=1,\dots,M$, separated by
smooth internal interfaces
$\Gamma_{m\ell}=\partial\Omega_m\cap\partial\Omega_{\ell}\neq
\emptyset$. For a fixed positive $T$, we denote by $Q_{\ell T}$ the
space-time cylinder $Q_{\ell T}=\Omega_{\ell}\times(0,T)$, similarly
{ $S_{\ell T}=(\partial\Omega_{\ell}\cap\Gamma)\times (0,T) $} and
$\Sigma_{m\ell}^T=\Gamma_{m\ell}\times(0,T)$. Further, in
\eqref{eq01}--\eqref{eq03}, $\bfu_{\ell}=(u^1_{\ell},u^2_{\ell})$
represents the unknown fields of state variables and the vector
$\bfmu_{\ell}=({\mu}^1_{\ell},{\mu}^2_{\ell})$ describes the initial
condition. By $\bfB_{\ell}$, $\bfA^j_{\ell}$, $\bff_{\ell}$,
$\bfg_{\ell}$, we denote the vectors
$\bfB_{\ell}=(B^1_{\ell},B^2_{\ell})$,
$\bfA^j_{\ell}=(A^{j1}_{\ell},A^{j2}_{\ell})$,
$\bff_{\ell}=(f^1_{\ell},f^2_{\ell})$,
$\bfg_{\ell}=(g^1_{\ell},g^2_{\ell})$, which are functions of
primary unknowns $\bfu_{\ell}$. Hence, the problem is strongly
nonlinear.

The existence of weak solutions to the system~\eqref{eq01} in
homogeneous bounded domains ($\ell=1$) subject to mixed boundary
conditions with homogeneous Neumann boundary conditions has been
shown by Alt and Luckhaus in~\cite{AltLuckhaus1983}. They obtained
an existence result assuming the operator $\bfB$ in the parabolic
part to be only (weak) monotone and subgradient.
This result has been extended in various different directions. Filo
and Ka\v{c}ur~\cite{FiloKacur1995} proved the local existence of the
weak solution for the system with nonlinear Neumann boundary
conditions and under more general growth conditions on
nonlinearities in $\bfu$. These results are not applicable if $\bfB$
does not take the subgradient form, which is typical of coupled heat
and mass transport models.

In this context, the only related works we are aware of are due to
Vala~\cite{Vala2002}, Li and Sun~\cite{LiSun2010} and Li~\emph{et
al.}~\cite{LiSunWang2010}.
Nonetheless, even though~\cite{Vala2002} admits non-symmetry in the
parabolic part, it requires unrealistic symmetry in the elliptic
term. The latter works, studying a model arising from textile
industry, prove the global existence for one-dimensional problem
using the Leray-Schauder fixed point theorem. The proofs, however,
exploit the specific structure of the model and as such are not
applicable to our general setting.

In this paper we adapt ideas presented by Giaquinta and Modica
in~\cite{GiaquintaModica1987} and Weidemaier in
\cite{Weidemaier1991}, where the local solvability of quasilinear
diagonal parabolic systems is proved, to show the local existence of
strong solution to the general transmission
problem~\eqref{eq01}--\eqref{eq03} for isotropic media under less
restrictive assumptions on the operator $\bfB(\bfu)$ and the
parabolicity condition of the problem. The main result (local in
time existence) is proved by means of a fixed point argument based
on the Banach contraction principle.

The paper is organized as follows. In Section \ref{Preliminaries},
we introduce the appropriate function spaces and recall important
embeddings and interpolation-like inequalities needed below together
with some auxiliary results. In Section
\ref{sec_structure_conditions}, we specify our assumptions on data
and structure conditions and introduce the precise definition of
admissible domains describing the composite body under which the
main result of the paper is proved. In Section~\ref{linearproblem},
we prove the existence and uniqueness of the solution to an
auxiliary linearized problem using the regularity result for
elliptic systems in composite-like domains. To make the text more
readable, technical details of the proof are collected in
Appendices~\ref{proof_1},~\ref{proof_2}
and~\ref{regularity_stationary}. The main result is proved in
Section~\ref{NonlinearProblem} via the Banach contraction principle.
Finally, in Section~\ref{sec:applications} we present applications
of the theory to selected engineering models of heat and mass
transfer.

%--------------------------------------------------------------------------------

\section{Preliminaries}\label{Preliminaries}
\subsection{Definition of some function spaces and notation}

We denote by $\mathbf{W}_{\ell}^{l,p}\equiv
W^{l,p}(\Omega_{\ell})^2$, $l\ge 0$ ($l$ need not to be an integer)
and $1 \leq p \leq \infty$, the usual Sobolev space of functions
defined in $\Omega_{\ell}$ and by
$\mathbf{W}_{\ell,\Gamma}^{l-1/p,p}\equiv
W^{l-1/p,p}(\partial\Omega_{\ell})^2$ the space of traces of
functions from $\mathbf{W}^{l,p}_{\ell}$ on $\partial\Omega_{\ell}$.
We set $\mathbf{L}^p_{\ell}\equiv\mathbf{W}^{0,p}_{\ell}$.
Let $\mathcal{B}$ be an arbitrary Banach space, then $(\mathcal{B})^*$ represents its dual.
$\bfphi'(t)$ indicates the partial derivative with respect to time;
we also write
\begin{displaymath}
\bfphi'(t) := \frac{\partial \bfphi}{\partial t}  .
\end{displaymath}
In order to define the concept of strong solution, we will make use
of the following Banach spaces
\begin{multline*}
\mathcal{X}_{\ell,T} := \left\{ \bfphi; \,  { \bfphi'(t)} \in
L^{\infty}(0,T;\mathbf{L}^{2}_{\ell}), \, \bfphi'(t) \in
L^2(0,T;\mathbf{W}^{2,2}_{\ell}), \right.
\\
\left.  \bfphi''(t) \in L^2(0,T;\mathbf{L}^2_{\ell}),\;
\bfphi(0)={\bf0} \right\}
\end{multline*}
and
\begin{displaymath}
\mathcal{Y}_{\ell,T}:=\left\{ \bfvarphi; \,  \bfvarphi'(t) \in
L^2(0,T;\mathbf{L}^2_{\ell}),  \,
\bfvarphi(0)\in\mathbf{W}^{1,2}_{\ell}\right\},
\end{displaymath}
respectively, equipped with the norms
\begin{equation}
\|\bfphi\|_{\mathcal{X}_{\ell,T}}  := \|{ \bfphi'(t)}
\|_{L^{\infty}(0,T;\mathbf{L}^{2}_{\ell})} +
\|\bfphi'(t)\|_{L^2(0,T;\mathbf{W}^{2,2}_{\ell})} +
\|\bfphi''(t)\|_{L^2(0,T;\mathbf{L}^{2}_{\ell})}
\end{equation}
and
\begin{equation}
\|\bfvarphi\|_{\mathcal{Y}_{\ell,T}}  :=
\|\bfvarphi'(t)\|_{L^2(0,T;\mathbf{L}^2_{\ell})}
+\|\bfvarphi(0)\|_{\mathbf{W}^{1,2}_{\ell}} ,
\end{equation}
respectively.

Throughout the paper, $\ell$ and $m$ are assumed to always range
from $1$ to $M$ and $m \not= \ell$, whereas indices $i,j=1,2$.
Unless specified otherwise, we use Einstein's summation convention
for indices running from 1 to 2. We shall denote by $c, c_1,
c_2,\dots$ generic constants independent on $T$ having different
values in different places. Let us stress that throughout the paper
the function $C=C(T)$ depends solely on $T$ and $C(T)\rightarrow
0_+$ for $T\rightarrow 0_+$.

\subsection{Some embeddings and interpolation like-inequalities}
In the paper we shall use the following embeddings (recall that
$\Omega$ is a two-dimensional bounded domain)(see
\cite{AdamsFournier1992,KufFucJoh1977}):

\begin{equation}\label{embedding_theorems}
\left\{
\begin{array}{lll}
\mathbf{W}^{1,2}_{\ell}\hookrightarrow \mathbf{L}_{\ell}^p,\;  &
\|\bfphi\|_{\mathbf{L}^p_{\ell}} \leq c \,
\|\phi\|_{\mathbf{W}^{1,2}_{\ell}} & \forall \bfphi \in
\mathbf{W}^{1,2}_{\ell},\; 1\leq p <\infty,
\\
\mathbf{W}^{l,2}_{\ell}\hookrightarrow \mathbf{W}^{1,p}_{\ell}, &
\|\bfphi\|_{\mathbf{W}^{1,p}_{\ell}}\leq
c\,\|\phi\|_{\mathbf{W}^{l,2}_{\ell}} & \forall \bfphi \in
\mathbf{W}^{l,2}_{\ell},\; 1<l<2, p=2/(2-l),
\\
\mathbf{W}^{l,p}_{\ell}\hookrightarrow \mathbf{L}^{\infty}_{\ell}, &
\|\bfphi\|_{\mathbf{L}^{\infty}_{\ell}} \leq c \,
\|\phi\|_{\mathbf{W}^{l,p}_{\ell}} & \forall \bfphi \in
\mathbf{W}^{l,p}_{\ell},\; lp>2.
\end{array}\right.
\end{equation}
Let us present some properties of $\mathcal{X}_{\ell,T}$. Assume
$\bfphi \in \mathcal{X}_{\ell,T}$. Using the interpolation
inequality \cite[Theorem 5.8]{AdamsFournier1992}
\begin{equation}\label{interpolner}
\|\bfphi'(t)\|_{\mathbf{L}^{4}_{\ell}} \leq
c\|\bfphi'(t)\|^{1/4}_{\mathbf{W}^{2,2}_{\ell}}
\|\bfphi'(t)\|^{3/4}_{\mathbf{L}^{2}_{\ell}}
\end{equation}
we obtain
\begin{eqnarray}\label{odh1}
\| \bfphi'(t)\|_{L^{8}(0,T;\mathbf{L}^{4}_{\ell})}&\leq& c
\|\bfphi'(t) \|^{1/4}_{L^2(0,T;\mathbf{W}^{2,2}_{\ell})} \|
\bfphi'(t) \|^{3/4}_{{L}^{\infty}(0,T;\mathbf{L}^{2}_{\ell})}
\nonumber\\
&\leq& c \, \|\bfphi\|_{\mathcal{X}_{\ell,T}}.
\end{eqnarray}
For all $\bfphi \in \mathcal{X}_{\ell,T}$ we have
\begin{eqnarray}\label{odh2}
\|\bfphi\|_{{L}^{\infty}(0,T;\mathbf{L}^{\infty}_{\ell})} \leq c
\|\bfphi\|_{{L}^{\infty}(0,T;\mathbf{W}^{2,2}_{\ell})} &\leq&
cT^{1/2} \|\bfphi'(t)\|_{{L}^{2}(0,T;\mathbf{W}^{2,2}_{\ell})}
\nonumber
\\
&\leq& cT^{1/2} \|\bfphi\|_{\mathcal{X}_{\ell,T}}.
\end{eqnarray}
Further, combining \eqref{embedding_theorems} and the interpolation
inequality \cite[Theorem 5.2]{AdamsFournier1992} we obtain
\begin{equation}\label{interpolner2}
\|\bfphi'(t)\|_{\mathbf{L}^{\infty}_{\ell}} \leq
c\|\bfphi'(t)\|_{\mathbf{W}^{1,3}_{\ell}} \leq
c\|\bfphi'(t)\|_{\mathbf{W}^{4/3,2}_{\ell}} \leq
c\|\bfphi'(t)\|^{2/3}_{\mathbf{W}^{2,2}_{\ell}}
\|\bfphi'(t)\|^{1/3}_{\mathbf{L}^{2}_{\ell}}
\end{equation}
and consequently
\begin{equation}\label{odh3}
\| \bfphi'(t)\|_{L^{3}(0,T;\mathbf{L}^{\infty}_{\ell})} \leq c \|
\bfphi'(t)\|_{L^{3}(0,T;\mathbf{W}^{1,3}_{\ell})}  \leq c \,
\|\bfphi\|_{\mathcal{X}_{\ell,T}}.
\end{equation}

%------------------------------------------------------------------------------------

\section{Structure conditions and admissible
domains}\label{sec_structure_conditions}

In this Section, we summarize our assumptions on the problem data
and specify in detail the geometry of the considered domains.

\subsection{Structure conditions}\label{structure_conditions}
\begin{itemize}
\item[(A1)] { For every $\bfz \in \mathbb{R}^2$,  $B^1_{\ell}(s,z_2)$ and
$B^2_{\ell}(z_1,s)$ are increasing functions (with respect to $s$)},
$B^j_{\ell}:\mathbb{R}^2\rightarrow \mathbb{R}$,
{ such that $|\partial^{\alpha}B^j_{\ell}(\bfz)|$ are bounded on every
bounded set in $\mathbb{R}^2$ for $|\alpha|\leq 3$.}
 Further, we denote
the matrix
\begin{displaymath}
b^{ij}_{\ell}(\bfz) := \frac{\partial B^j_{\ell}(\bfz)}{\partial
z^i};
\end{displaymath}
%Note that (A1) yields $b^{ij}_{\ell}\in W^{2,\infty}(\mathbb{R}^2)$.
%
\item[(A2)] $\bfA^j_{\ell}:\mathbb{R}^2\times\mathbb{R}^{2\times 2}
\rightarrow\mathbb{R}^2$ are continuous and of the semilinear form
\begin{equation}\label{form_A}
\bfA^j_{\ell}(\bfr,\bfs)= \sum_{i=1}^2 a^{ji}_{\ell}(\bfr)\bfs_i,
\end{equation}
for all $\bfr\in \mathbb{R}^2$ and $\bfs_i=(s_i^1,s_i^2)$, where
$s_i^j\in\mathbb{R}^2$ for $i,j=1,2$. Note that in \eqref{form_A}
$\bfr$ stands for $\bfu$ and $\bfs_i$ stands for the vector $\nabla
u^i$. Functions $a^{ji}_{\ell}:\mathbb{R}^2\rightarrow\mathbb{R}$,
are positive, scalar (due to the assumed isotropy of the material)
and { $|\partial^{\alpha}a^{ji}_{\ell}(\bfr)|$ are bounded on every
bounded set in $\mathbb{R}^2$ for $|\alpha|\leq 3$}.
Further we assume
\begin{equation}\label{gen_par1}
b^{11}_{\ell}(\bfmu_{\ell})b^{22}_{\ell}(\bfmu_{\ell})
a^{12}_{\ell}(\bfmu_{\ell})a^{21}_{\ell}(\bfmu_{\ell})>
\left(\frac{b^{12}_{\ell}(\bfmu_{\ell})a^{21}_{\ell}
(\bfmu_{\ell})+b^{21}_{\ell}(\bfmu_{\ell})
a^{12}_{\ell}(\bfmu_{\ell})}{2}\right)^2
\end{equation}
{ in $\overline{\Omega}$ and the ellipticity condition}
\begin{equation}\label{gen_par2}
a^{11}_{\ell}(\bfmu_{\ell})a^{22}_{\ell}(\bfmu_{\ell})>
a^{12}_{\ell}(\bfmu_{\ell})a^{21}_{\ell}(\bfmu_{\ell})
\end{equation}
{ in $\overline{\Omega}$} with $\bfmu_{\ell}$ representing the
initial distribution of the unknown fields $\bfu_{\ell}$;
\item[(A3)]  $\bff_{\ell}:\mathbb{R}^2\rightarrow \mathbb{R}^2$,
 { $|\partial^{\alpha}f^{j}_{\ell}(\bfz)|$ are bounded on every
bounded set in $\mathbb{R}^2$ for $|\alpha|\leq 2$};
\item[(A4)] $\bfg:\Gamma\times (0,T)\times\mathbb{R}^2\rightarrow
\mathbb{R}^2$ is of the form of the Newton-type boundary conditions
\begin{displaymath}
g^j_{\ell}(\bfx,t,\bfu_{\ell})=\alpha^j_{\ell}(u^j_{\ell}-\sigma^j(\bfx,t)),
\end{displaymath}
where $\alpha^j_{\ell}$ are given positive constants and
$\bfsigma:\Gamma\times (0,T)\rightarrow \mathbb{R}^2$, $j=1,2$,
$$\bfsigma\in{W^{2,2}(0,T;(\mathbf{W}^{1/2,2}_{\Gamma})^*)}\cap
{W^{1,2}(0,T;\mathbf{W}^{1/2,2}_{\Gamma})}.$$
\end{itemize}

\subsection{Admissible domains}\label{Admissible domains}

In what follows, we assume that (cf. Figure~\ref{composite})
\begin{itemize}
\item[(i)] $\Omega$ is decomposed into  nonoverlapping subdomains
$\Omega_{\ell}$;
\item[(ii)] there exists a finite set
$\mathcal{S}\subset\partial\Omega$ of boundary points such that
$\partial\Omega\setminus \mathcal{S}$ is smooth (of class
$C^\infty$);
\item[(iii)] for every $P\in\mathcal{S}$ there exists a neighborhood
$\mathcal{U}_P$ and a diffeomorphism $D_P$ mapping $\Omega \cap
\mathcal{U}_P$ onto $\mathcal{K}_P\cap B_P$, where $\mathcal{K}_P$
is an angle of size $\omega_{P}<\pi$ with vertex at the origin
(shifted into $P$),
$$
\mathcal{K}_P:=\left\{[x_1,x_2]\in\mathbb{R}^2; \; 0<r<\infty, \; 0
< \varphi < \omega_{P} \right\},
$$
and $B_P$ is a unit circle centered at the origin ($r,\varphi$
denote the polar coordinates in the $(x'_1,x'_2)$-plane);
\item[(iv)] the interfaces $\Gamma_{m\ell}$ are smooth (of class
$C^\infty$), $m=1,\dots,M$, $m\neq\ell$;
\item[(v)] there are no cross points of $\overline{\Gamma}_{m\ell}$
in $\overline{\Omega}$.
\end{itemize}
Let $\mathcal{M}$ be the set of all boundary points
$A\in\Gamma\equiv\partial\Omega\cap\Gamma_{m\ell}$, $m=1,\dots,M$,
i.e. the points where any interface $\Gamma_{m\ell}$ crosses the
exterior boundary $\partial\Omega$.
\begin{figure}[h]
\centering
\includegraphics[width=5.0cm]{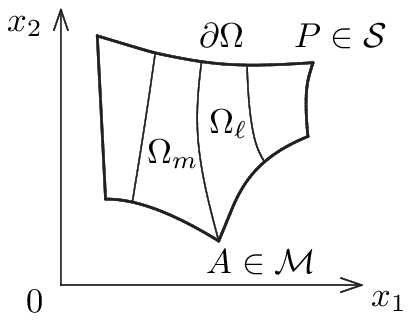}
%\caption{The admissible domain $\Omega$.}\label{composite}
\quad \includegraphics[width=6.0cm]{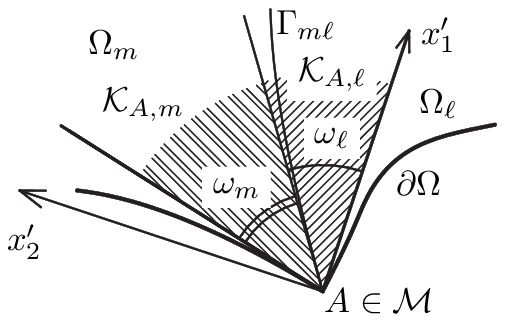}
\caption{Admissible domains.}\label{composite}
\end{figure}
Further, we assume that
\begin{itemize}
\item[(vi)]  for every $A\in\mathcal{M}$,
such that $A\in\partial\Omega\cap\Gamma_{m\ell}$, there exists a
neighborhood $\mathcal{U}_A$ and a diffeomorphism $D_{A,\ell}$ and
$D_{A,m}$, respectively, mapping $\Omega_{\ell} \cap \mathcal{U}_A$
onto $\mathcal{K}_{A,\ell}\cap B_A$ and $\Omega_m \cap
\mathcal{U}_A$ onto $\mathcal{K}_{A,m}\cap B_A$, respectively, where
$\mathcal{K}_{A,\ell}$ and $\mathcal{K}_{A,m}$, respectively, is an
angle of size $\omega_{\ell}$ and $\omega_m$, respectively, with
vertex at $A$
$$
\mathcal{K}_{A,\ell}:=\left\{[x_1,x_2]\in\mathbb{R}^2; \;
0<r<\infty, \; 0 < \varphi < \omega_{\ell} \right\}
$$
and
$$
\mathcal{K}_{A,m}:=\left\{[x_1,x_2]\in\mathbb{R}^2; \; 0<r<\infty,
\; \omega_{\ell}  < \varphi < \omega_{\ell}+\omega_{m} \right\},
$$
respectively, and $B_A$ is a unit circle centered at the origin
($r,\varphi$ denote the polar coordinates in the $(x'_1,x'_2)$-plane
with the origin at $A$);
\item[(vii)] for every $A\in\mathcal{M}$,
such that $A\in\partial\Omega\cap\Gamma_{m\ell}$ we have
$\omega_{\ell}=\omega_m$;
\item[(viii)] for every $A\in\mathcal{M}$,
$A\in\partial\Omega\cap\Gamma_{m\ell}$, we have
$\omega_{A}=\omega_{\ell}+\omega_m=2\omega_{\ell}\leq\pi$.
\end{itemize}

\JZtext{
\begin{rmk}
Note that conditions (i)--(viii) incorporate, as a special case, rectangular
domain composed of regular rectangles $\Omega_\ell$. This serves as a
basic model for building envelopes, e.g.~\cite{Kalagasidis:2007:IBPT}.
\end{rmk}
}
%-----------------------------------------------------------------------------

\section{Solutions to an Auxiliary Linearized System}\label{linearproblem}
%%%%%%%%%%%%%%%%%%%%%%%%%%%%%%%%%%%%%%%%%%%%%%%%%%%%%%%%%%%%%%%%%%%%%%%
Following the standard methodology of contraction-based proofs, we
consider first an auxiliary linear problem with homogeneous initial
condition in the form
\begin{align}
\beta^{ji}_{\ell}\frac{\partial u^i_{\ell}}{\partial t}-\nabla \cdot
( \kappa^{ji}_{\ell}\nabla u^i_{\ell} ) &= f^j_{\ell}(\bfx,t) &&
{\rm in } \; { Q_{\ell T}}, \label{eqlin01}
\\
\kappa^{ji}_{\ell}\frac{\partial u^i_{\ell}}{\partial
\bfn_{\ell}}+\nu_{\ell}^j u^j_{\ell}&=g^j_{\ell}(\bfx,t) && {\rm on}
\;
{ S_{\ell T}},\label{eqlin02}
\\
u^j_{\ell} &= u^j_{m} && {\rm on } \;
\Gamma_{m\ell}\times(0,T),\label{eqlin03}
\\
\kappa^{ji}_{\ell}\frac{\partial u^i_{\ell}}{\partial
\bfn_{\ell}}+\kappa^{ji}_{m}\frac{\partial u^i_{m}}{\partial
\bfn_{m}}&=0 && {\rm on} \;
\Gamma_{m\ell}\times(0,T),\label{eqlin04}
\\
\bfu_{\ell}(\bfx,0)&=\bf0 &&{\rm in } \;
\Omega_{\ell}.\label{eqlin05}
\end{align}

\paragraph{Assumptions} In
 \eqref{eqlin01}--\eqref{eqlin05}  $\nu^j_{\ell}$  are real positive constants,
$\beta^{ji}_{\ell}:=\beta^{ji}_{\ell}(\bfx)$,
$\kappa^{ji}_{\ell}:=\kappa^{ji}_{\ell}(\bfx)$ are real positive
Lipschitz continuous functions such that
\begin{equation}\label{lingen_par1}
\beta^{11}_{\ell}\beta^{22}_{\ell}\kappa^{12}_{\ell}\kappa^{21}_{\ell}>
\left(\frac{\beta^{12}_{\ell}\kappa^{21}_{\ell}+
\beta^{21}_{\ell}\kappa^{12}_{\ell}}{2}\right)^2 \quad   \textmd{ in
} { \overline{\Omega}_{\ell} }
\end{equation}
and { the ellipticity condition}
\begin{equation}\label{lingen_par2}
\kappa^{11}_{\ell}\kappa^{22}_{\ell}>\kappa^{12}_{\ell}\kappa^{21}_{\ell}
\quad   \textmd{ in } { \overline{\Omega}_{\ell}}.
\end{equation}

\begin{dfn}
Let $\bff_{\ell}\in \mathcal{Y}_{\ell,T}$ and $\bfg_{\ell}\in
{W^{2,2}(0,T;(\mathbf{W}^{1/2,2}_{\partial\Omega_{\ell}\cap\Gamma})^*)}\cap
{W^{1,2}(0,T;\mathbf{W}^{1/2,2}_{\partial\Omega_{\ell}\cap\Gamma})}$.
Then $\bfu_{\ell}\in \mathcal{X}_{\ell,T}$ is called a strong
solution to the system \eqref{eqlin01}--\eqref{eqlin05} iff
\begin{multline}\label{eq_lin01}
\sum_{\ell=1}^{M} \int_{\Omega_{\ell}}
\beta^{ji}_{\ell}\frac{\partial u^i_{\ell}}{\partial t} \, v^j \,
{\rm d}\bfx + \sum_{\ell=1}^{M} \int_{\Omega_{\ell}}
  \kappa^{ji}_{\ell}\nabla u^i_{\ell}\cdot \nabla v^j  \,
{\rm d}\bfx   +
\sum_{\ell=1}^{M}\int_{\partial\Omega_{\ell}\cap\Gamma} \nu^j_{\ell}
\; u^j_{\ell}\,v^j \, {\rm d}S
\\
= \sum_{\ell=1}^{M}\int_{\Omega_{\ell}} f^j_{\ell} \, v^j \, {\rm
d}\bfx + \sum_{\ell=1}^{M}\int_{\partial\Omega_{\ell}\cap\Gamma}
g^j_{\ell}\,v^j \, {\rm d}S
\end{multline}
holds for every $\bfv\in \mathbf{W}^{1,2}$ and almost every $t\in
(0,T)$.
\end{dfn}

\begin{rmk}\label{compatibility_conditions}
The regularity in time direction naturally imposes two higher order
compatibility conditions on the given functions $\bff_{\ell}$ and
$\bfg_{\ell}$ in \eqref{eqlin01}--\eqref{eqlin02}. Namely, the first
one requires $\bfg_{\ell}(\bfx,0)$ to be compatible with
\eqref{eqlin02} while the second one roughly says that
$\bfu_{\ell}'(t)|_{t=0}$ has to belong to appropriate Sobolev
spaces. This implies additional conditions on $\bff_{\ell}(\bfx,0)$
included in the definition of the space $\mathcal{Y}_{\ell,T}$.
\end{rmk}

\begin{thm}\label{thm_lin}
Let $\bff_{\ell}\in \mathcal{Y}_{\ell,T}$, $\bfg_{\ell}\in
{W^{2,2}(0,T;(\mathbf{W}^{1/2,2}_{\partial\Omega_{\ell}\cap\Gamma})^*)}\cap
{W^{1,2}(0,T;\mathbf{W}^{1/2,2}_{\partial\Omega_{\ell}\cap\Gamma})}$
and $\bfg_{\ell}(\bfx,0)=\bf0$ on $\partial\Omega$. Then there
exists the unique strong solution $\bfu_{\ell}\in
\mathcal{X}_{\ell,T}$ to the system \eqref{eqlin01}--\eqref{eqlin05}
and the following estimate holds
\begin{equation}\label{eq11a}
\|\bfu_{\ell}\|_{\mathcal{X}_{\ell,T}} \leq  c \left(
\|\bff_{\ell}\|_{\mathcal{Y}_{\ell,T}} +  \|
\bfg_{\ell}\|_{W^{2,2}(0,T;(\mathbf{W}^{1/2,2}_{\partial\Omega_{\ell}\cap\Gamma})^*)}
+
\|\bfg_{\ell}\|_{W^{1,2}(0,T;\mathbf{W}^{1/2,2}_{\partial\Omega_{\ell}\cap\Gamma})}\right).
\end{equation}
\end{thm}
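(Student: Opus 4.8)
The plan is to construct the solution by a Galerkin scheme posed in the global space $\mathbf{W}^{1,2}\equiv W^{1,2}(\Omega)^2$, observing that the weak formulation \eqref{eq_lin01} already encodes the transmission conditions \eqref{eqlin03}--\eqref{eqlin04}: continuity of $u^j$ across $\Gamma_{m\ell}$ is exactly membership in $\mathbf{W}^{1,2}$, while the flux-balance condition \eqref{eqlin04} is recovered from integration by parts. Fixing a basis $\{\bfw_k\}$ of $\mathbf{W}^{1,2}$ and seeking $\bfu_n(t)=\sum_{k=1}^n c_k(t)\bfw_k$ leads to a linear system of ordinary differential equations for the $c_k$, uniquely solvable because the mass matrix built from $\int_{\Omega_\ell}\beta^{ji}_\ell\,w^i\,w^j$ is regular by the positivity of the $\beta^{ji}_\ell$ and \eqref{lingen_par1}. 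Uniqueness of the strong solution is then immediate from linearity: the difference of two solutions solves the homogeneous problem with zero data, and the coercivity exploited below forces it to vanish.

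The heart of the matter is a chain of a priori estimates, uniform in $n$, from which \eqref{eq11a} is read off. Because $\beta^{ji}_\ell$ and $\kappa^{ji}_\ell$ do not depend on $t$, I would differentiate the system in time and study $\bfw:=\bfu_n'$, which satisfies the same structure with data $\bff_\ell'$ and $\bfg_\ell'$; its initial value $\bfw(0)$ is determined from \eqref{eqlin01} at $t=0$ via $\beta^{ji}_\ell w^i(0)=f^j_\ell(\cdot,0)$ (using $\bfu_\ell(\cdot,0)=\mathbf{0}$), and the hypotheses $\bfg_\ell(\cdot,0)=\mathbf{0}$ together with the compatibility encoded in $\mathcal{Y}_{\ell,T}$ (Remark \ref{compatibility_conditions}) are precisely what make this initial datum lie in $\mathbf{W}^{1,2}_\ell$ and keep the constants finite. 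Testing the $\bfw$-equation successively with $\bfw$ and with $\bfw'=\bfu_n''$ yields, respectively, $\|\bfu_n'\|_{L^\infty(0,T;\mathbf{L}^2_\ell)}$ with $\|\nabla\bfu_n'\|_{L^2(0,T;\mathbf{L}^2_\ell)}$, and then $\|\bfu_n''\|_{L^2(0,T;\mathbf{L}^2_\ell)}$ with $\|\nabla\bfu_n'\|_{L^\infty(0,T;\mathbf{L}^2_\ell)}$, the boundary contributions being absorbed by the trace theorem and the norms of $\bfg_\ell'$. Extracting a sign from the quadratic forms $\int_{\Omega_\ell}\beta^{ji}_\ell\,\partial_t w^i\,w^j$ and $\int_{\Omega_\ell}\kappa^{ji}_\ell\nabla w^i\cdot\nabla w^j$ is delicate, since neither coefficient matrix is symmetric and the two equations are coupled; this is exactly the role of the parabolicity and ellipticity conditions \eqref{lingen_par1}--\eqref{lingen_par2}, which I would exploit after introducing a suitable symmetrizing multiplier (equivalently, testing with an appropriate linear combination of the components) rather than a naive componentwise test.

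To reach the $L^2(0,T;\mathbf{W}^{2,2}_\ell)$ regularity of $\bfu'$ demanded by $\mathcal{X}_{\ell,T}$ -- which the energy estimates alone do not provide, giving only $\nabla\bfu'\in L^\infty(\mathbf{L}^2_\ell)$ -- I would freeze time and, for a.e.\ $t$, read the once-differentiated equation as the stationary elliptic transmission problem $-\nabla\cdot(\kappa^{ji}_\ell\nabla \partial_t u^i_\ell)=\partial_t f^j_\ell-\beta^{ji}_\ell\,\partial_t^2 u^i_\ell$ with the boundary conditions \eqref{eqlin02}--\eqref{eqlin04} for $\bfu'$, whose right-hand side already lies in $\mathbf{L}^2_\ell$ by the previous step. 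Invoking the $\mathbf{W}^{2,2}$-regularity result for elliptic transmission systems in admissible composite domains collected in Appendix \ref{regularity_stationary}, then squaring and integrating its estimate over $(0,T)$, delivers $\bfu'\in L^2(0,T;\mathbf{W}^{2,2}_\ell)$. With all bounds uniform in $n$, weak and weak-$*$ compactness let me pass to the limit $n\to\infty$ and identify the strong solution, and collecting the estimates gives \eqref{eq11a}.

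The main obstacle is not the energy method but this elliptic regularity step at the geometric singularities. A generic second-order transmission problem with coefficients jumping across $\Gamma_{m\ell}$ fails to be $\mathbf{W}^{2,2}$-regular near the corners of $\partial\Omega$ and, most critically, near the junction points $\mathcal{M}$ where an interface meets the outer boundary, where only weighted or fractional regularity is available in general. It is precisely to suppress these singular contributions that the admissibility conditions (ii)--(viii) are imposed: the angle bounds $\omega_P<\pi$, the matching $\omega_\ell=\omega_m$ with $\omega_A\le\pi$ at the junctions, and the exclusion of cross points in (v). Establishing that these geometric restrictions, combined with the isotropy and positivity of the $\kappa^{ji}_\ell$, actually restore full $\mathbf{W}^{2,2}$ regularity for the composite operator is the technical crux of the whole argument, and is the reason that step is deferred to Appendix \ref{regularity_stationary}.
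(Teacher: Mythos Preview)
Your approach is sound and shares the paper's essential ingredients: a symmetrising test to extract coercivity from \eqref{lingen_par1}--\eqref{lingen_par2}, time differentiation, and the elliptic transmission regularity of Appendix~\ref{regularity_stationary} to upgrade $\nabla\bfu'\in L^2$ to $\bfu'\in L^2(0,T;\mathbf{W}^{2,2}_\ell)$. The paper organises things slightly differently: it splits by linearity into the case $\bfg_\ell\equiv\mathbf{0}$ (Lemma~\ref{lm_sup_10}, Appendix~\ref{proof_1}) and the case $\bff_\ell\equiv\mathbf{0}$ (Lemma~\ref{lm_sup_11}, Appendix~\ref{proof_2}) and then superposes; and it discretises in time by Rothe's method rather than in space by Galerkin. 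The explicit symmetriser it uses is $[v^1,v^2]=[\kappa^{21}_\ell\varphi^1,\kappa^{12}_\ell\varphi^2]$, under which \eqref{lingen_par1} gives positivity of the parabolic form and \eqref{lingen_par2} gives ellipticity of the principal part.

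One point in your Galerkin route does need care. Conditions \eqref{lingen_par1}--\eqref{lingen_par2} do \emph{not} say that the symmetric parts of $(\beta^{ji}_\ell)$ or $(\kappa^{ji}_\ell)$ are positive definite, so testing the $n$-th Galerkin equation naively with $\bfu_n'$ or $\bfu_n''$ need not produce a sign. The symmetrised test function $[\kappa^{21}_\ell(u_n^1)',\,\kappa^{12}_\ell(u_n^2)']$ that \emph{does} yield coercivity generally fails to lie in the finite-dimensional trial space $V_n$, because the multipliers $\kappa^{pj}_\ell$ depend on $\bfx$. In Rothe this difficulty disappears: each time-discrete step is a variational problem posed over all of $\mathbf{W}^{1,2}$, so the symmetrised test is admissible and Lax--Milgram applies to the resulting bilinear form $\mathfrak{A}$. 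If you wish to keep the Galerkin framework, you should pose the approximate problem directly in terms of the symmetrised form $\mathfrak{A}(\bfu_n,\bfvarphi)$ for $\bfvarphi\in V_n$; then testing with $\bfvarphi=\bfu_n',\bfu_n''$ gives the uniform bounds, and the limit solves the original formulation since the two are equivalent at the continuous level.
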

To prove Theorem \ref{thm_lin} we prepare the following definitions
and lemmas.
\begin{dfn}[Problem ($P_f$)] Let us define Problem ($P_f$) by the
linear transmission system \eqref{eqlin01}--\eqref{eqlin05} with
$\bfg_{\ell}\equiv{\bf0}$ on $\partial\Omega_{\ell}\cap\Gamma
\times[0,T)$.
\end{dfn}
\begin{lem}\label{lm_sup_10}
Let $\bff_{\ell}\in \mathcal{Y}_{\ell,T}$. Then there exists the
unique strong solution $\bfu_{\ell}\in \mathcal{X}_{\ell,T}$ of
Problem $(P_f)$. Moreover, the following estimate holds
\begin{equation}\label{eq11b}
\|\bfu_{\ell}\|_{\mathcal{X}_{\ell,T}} \leq c
\|\bff_{\ell}\|_{\mathcal{Y}_{\ell,T}}.
\end{equation}
\end{lem}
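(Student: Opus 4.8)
The plan is to construct $\bfu_\ell$ by a Faedo--Galerkin scheme, to extract the three contributions of the $\mathcal{X}_{\ell,T}$-norm from energy estimates for the time-differentiated system, and finally to recover the top-order spatial regularity from elliptic theory on the composite domain. Throughout, the interface continuity \eqref{eqlin03} is built into the test space $V:=\{\bfv\in\mathbf{W}^{1,2}:\ v^j_\ell=v^j_m\ \text{on}\ \Gamma_{m\ell}\}$, so that the weak formulation \eqref{eq_lin01} (with $\bfg_\ell\equiv\mathbf{0}$) already encodes \eqref{eqlin03}--\eqref{eqlin04}. First I would fix a Galerkin basis $\{\bfchi_k\}$ of $V$ and seek $\bfu^n(t)=\sum_{k\le n}c_k(t)\bfchi_k$ solving \eqref{eq_lin01} tested against $\bfchi_1,\dots,\bfchi_n$. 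Since the entries of $\beta_\ell=(\beta^{ji}_\ell)$ are positive and \eqref{lingen_par1} forces $\det\beta_\ell>0$ (by the arithmetic--geometric mean inequality), the mass matrix is invertible and this is a uniquely solvable linear ODE system on $[0,T]$; the homogeneous datum \eqref{eqlin05} gives $\bfu^n(0)=\mathbf{0}$. The role of the structural conditions \eqref{lingen_par1}--\eqref{lingen_par2} is to supply a positive-definite symmetrizer (equivalently, a weighted choice of test function) under which the leading parabolic and elliptic forms in \eqref{eq_lin01} become coercive; the attendant algebra I would relegate to an appendix.

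Next comes the regularity bootstrap, the core of \eqref{eq11b}. Because $\beta^{ji}_\ell,\kappa^{ji}_\ell$ are time-independent, the functions $\bfw^n:=(\bfu^n)'$ satisfy the same system with right-hand side $\bff'_\ell\in L^2(0,T;\mathbf{L}^2_\ell)$, which is available precisely because $\bff_\ell\in\mathcal{Y}_{\ell,T}$. Evaluating the equation at $t=0$ and using $\bfu^n(0)=\mathbf{0}$ yields $\beta^{ji}_\ell w^i(0)=f^j_\ell(\cdot,0)$, hence $\bfw(0)=\beta_\ell^{-1}\bff_\ell(\cdot,0)\in\mathbf{W}^{1,2}_\ell$; this is exactly the compatibility encoded by the requirement $\bfvarphi(0)\in\mathbf{W}^{1,2}_\ell$ in the definition of $\mathcal{Y}_{\ell,T}$ (cf.\ Remark~\ref{compatibility_conditions}). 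Testing the $\bfw^n$-system successively with $\bfw^n$ and with $(\bfw^n)'$ and invoking the coercivity gives, uniformly in $n$,
\[
\|(\bfu^n)'\|_{L^\infty(0,T;\mathbf{L}^2_\ell)}+\|\nabla(\bfu^n)'\|_{L^\infty(0,T;\mathbf{L}^2_\ell)}+\|(\bfu^n)''\|_{L^2(0,T;\mathbf{L}^2_\ell)}\le c\big(\|\bff'_\ell\|_{L^2(0,T;\mathbf{L}^2_\ell)}+\|\bff_\ell(\cdot,0)\|_{\mathbf{W}^{1,2}_\ell}\big)\le c\|\bff_\ell\|_{\mathcal{Y}_{\ell,T}} .
\]
These bounds already control two of the three terms of $\|\cdot\|_{\mathcal{X}_{\ell,T}}$; by weak-$*$ and weak compactness they pass to a limit $\bfu_\ell$ with $\bfu_\ell'\in L^\infty(0,T;\mathbf{W}^{1,2}_\ell)$ and $\bfu_\ell''\in L^2(0,T;\mathbf{L}^2_\ell)$ solving \eqref{eq_lin01}.

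It remains to produce the term $\|\bfu_\ell'\|_{L^2(0,T;\mathbf{W}^{2,2}_\ell)}$. With $\bfu_\ell$ in hand, for a.e.\ fixed $t$ the function $\bfw_\ell(t):=\bfu_\ell'(t)\in\mathbf{W}^{1,2}_\ell$ solves the stationary elliptic transmission problem $-\nabla\cdot(\kappa^{ji}_\ell\nabla w^i_\ell)=(f^j_\ell)'-\beta^{ji}_\ell(w^i_\ell)'$ with homogeneous Newton and transmission data ($\bfg_\ell\equiv\mathbf{0}$) and $\mathbf{L}^2$ right-hand side. Invoking the $\mathbf{W}^{2,2}$-regularity result for such systems on admissible composite domains gives
\[
\|\bfw_\ell(t)\|_{\mathbf{W}^{2,2}_\ell}\le c\big(\|(\bff_\ell)'(t)\|_{\mathbf{L}^2_\ell}+\|(\bfw_\ell)'(t)\|_{\mathbf{L}^2_\ell}\big),
\]
and integration in $t$ together with the previous estimate closes the bound. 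Collecting the three contributions yields $\bfu_\ell\in\mathcal{X}_{\ell,T}$ and \eqref{eq11b} by weak lower semicontinuity of the norms. Uniqueness follows by applying the energy estimate to the difference of two solutions, the problem being linear with $\bff_\ell=\mathbf{0}$.

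The main obstacle is the elliptic-regularity step: the interfaces $\Gamma_{m\ell}$ and the points of $\mathcal{M}$ where they meet $\partial\Omega$ are genuine singularities of the coefficients, so $\mathbf{W}^{2,2}$-regularity up to the interfaces is not automatic. It is exactly here that the geometric hypotheses (i)--(viii) --- in particular $\omega_P<\pi$ and the symmetric-angle condition $\omega_A=2\omega_\ell\le\pi$ --- are used to exclude the corner singularities that would otherwise drop the regularity below $\mathbf{W}^{2,2}$, and carrying this out for the coupled, non-symmetric system (rather than a single scalar equation) is the technical heart of the argument. A secondary difficulty is the coercivity itself: since \eqref{lingen_par2} is strictly weaker than positive-definiteness of the symmetric part of $(\kappa^{ji}_\ell)$, the energy estimates must genuinely couple the parabolic and elliptic terms through \eqref{lingen_par1}, and this must be done with constants independent of $T$ so that the assembled estimate respects the asserted $T$-independence of $c$.
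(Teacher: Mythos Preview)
Your proposal is correct and structurally matches the paper's argument: a coercivity step via a symmetrizer, energy estimates for the time-differentiated system yielding $\bfu_\ell''\in L^2(0,T;\mathbf{L}^2_\ell)$ and $\bfu_\ell'\in L^\infty(0,T;\mathbf{L}^2_\ell)$, and then the $\mathbf{W}^{2,2}$-bound recovered by freezing $t$ and invoking the elliptic transmission regularity (Corollary~\ref{corollary_appendix}), which is where the angle conditions (vii)--(viii) enter. The one methodological difference is that you discretize in space (Faedo--Galerkin) while the paper discretizes in time (Rothe's backward differences); both routes are standard and either suffices here. The symmetrizer you defer to an appendix is made explicit in the paper as the weighted test function $[v^1,v^2]=[\kappa^{21}_\ell\varphi^1,\kappa^{12}_\ell\varphi^2]$: this renders the off-diagonal elliptic coefficients equal ($\kappa^{21}_\ell\kappa^{12}_\ell$ on both sides), so the elliptic form becomes the gradient of a convex functional $\Phi_\ell$ and testing with $\bfu_\ell'$ (resp.\ $\partial_t^{-h}\bfw_\ell$) produces a genuine time-derivative of an energy---which is precisely the mechanism you need to get $\|\nabla\bfu_\ell'\|_{L^\infty(0,T;\mathbf{L}^2_\ell)}$ from the non-symmetric system. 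Your observations that \eqref{lingen_par1} forces $\det\beta_\ell>0$ via AM--GM, that \eqref{lingen_par2} alone is weaker than coercivity and must be coupled with \eqref{lingen_par1}, and that the transmission-regularity step is the technical heart, are all on target.
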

\begin{proof}
See \ref{proof_1}. The proof relies on the results for stationary
transmission problem presented in \ref{regularity_stationary}.
\end{proof}
\begin{dfn}[Problem ($P_g$)] Let us define Problem ($P_g$) by the
linear transmission system \eqref{eqlin01}--\eqref{eqlin05} with
$\bff_{\ell}\equiv{\bf0}$ in $\Omega_{\ell} \times(0,T)$.
\end{dfn}
\begin{lem}\label{lm_sup_11}
Let $\bfg_{\ell}\in
{W^{2,2}(0,T;(\mathbf{W}^{1/2,2}_{\partial\Omega_{\ell}\cap\Gamma})^*)}\cap
{W^{1,2}(0,T;\mathbf{W}^{1/2,2}_{\partial\Omega_{\ell}\cap\Gamma})}$
and the compatibility condition $\bfg_{\ell}(\bfx,0)=\bf0$ on
$\partial\Omega$ be satisfied. Then there exists the unique strong
solution $\bfu_{\ell}\in \mathcal{X}_{\ell,T}$ of Problem $(P_g)$
and the following estimate holds
\begin{equation}\label{eq11c}
\|\bfu_{\ell}(t)\|_{\mathcal{X}_{\ell,T}}\leq c  \left( \|
\bfg_{\ell}\|_{W^{2,2}(0,T;(\mathbf{W}^{1/2,2}_{\partial\Omega_{\ell}\cap\Gamma})^*)}
+ \|
\bfg_{\ell}\|_{W^{1,2}(0,T;\mathbf{W}^{1/2,2}_{\partial\Omega_{\ell}\cap\Gamma})}
\right).
\end{equation}
\end{lem}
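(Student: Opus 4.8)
The plan is to reduce Problem $(P_g)$ to Problem $(P_f)$ by lifting the inhomogeneous Newton data with a time-parametrised family of stationary solutions. For each fixed $t\in(0,T)$ let $\bfw_{\ell}(t)=\mathcal{E}(\bfg_{\ell}(t))$ denote the weak solution of the stationary transmission problem
\[
-\nabla\cdot(\kappa^{ji}_{\ell}\nabla w^i_{\ell})=0 \ \text{ in }\ \om_{\ell},\qquad \kappa^{ji}_{\ell}\frac{\partial w^i_{\ell}}{\partial\bfn_{\ell}}+\nu^j_{\ell}w^j_{\ell}=g^j_{\ell}(t)\ \text{ on }\ \partial\om_{\ell}\cap\Gamma,
\]
supplemented by the transmission conditions \eqref{eqlin03}--\eqref{eqlin04}. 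Since $\nu^j_{\ell}>0$ on $\Gamma$ and \eqref{lingen_par2} holds, the associated bilinear form is coercive on $\mathbf{W}^{1,2}$, so $\mathcal{E}$ is well defined by the Lax--Milgram lemma; the elliptic regularity for composite domains established in \ref{regularity_stationary} (which is where the angle conditions (vi)--(viii) enter) shows that $\mathcal{E}$ is bounded both as $\mathcal{E}:(\mathbf{W}^{1/2,2}_{\partial\om_{\ell}\cap\Gamma})^*\to\mathbf{W}^{1,2}_{\ell}$ and as $\mathcal{E}:\mathbf{W}^{1/2,2}_{\partial\om_{\ell}\cap\Gamma}\to\mathbf{W}^{2,2}_{\ell}$, hence, by interpolation, as $\mathcal{E}:\mathbf{L}^{2}_{\partial\om_{\ell}\cap\Gamma}\to\mathbf{W}^{3/2,2}_{\ell}$. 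Because $\mathcal{E}$ is linear and independent of $t$, we have $\partial_t^k\bfw_{\ell}=\mathcal{E}(\partial_t^k\bfg_{\ell})$.

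Setting $\bfz_{\ell}:=\bfu_{\ell}-\bfw_{\ell}$, the Newton data cancel, so $\bfz_{\ell}$ satisfies the homogeneous boundary and transmission conditions of Problem $(P_f)$ with source $\tilde{\bff}_{\ell}:=-\beta^{ji}_{\ell}\,\partial_t w^i_{\ell}$; moreover the compatibility assumption $\bfg_{\ell}(\bfx,0)=\bf0$ gives $\bfw_{\ell}(0)=\mathcal{E}(\bf0)=\bf0$, so that $\bfz_{\ell}(0)=\bf0$ as required in $\mathcal{X}_{\ell,T}$. It remains to verify that $\tilde{\bff}_{\ell}\in\mathcal{Y}_{\ell,T}$ with norm controlled by the data. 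Since $\beta^{ji}_{\ell}$ is Lipschitz (hence a multiplier on both $\mathbf{L}^2_{\ell}$ and $\mathbf{W}^{1,2}_{\ell}$), one has $\tilde{\bff}_{\ell}'=-\beta^{ji}_{\ell}\,\partial_t^2 w^i_{\ell}$ with $\partial_t^2\bfw_{\ell}=\mathcal{E}(\partial_t^2\bfg_{\ell})$, and the low-regularity bound for $\mathcal{E}$ gives $\|\tilde{\bff}_{\ell}'\|_{L^2(0,T;\mathbf{L}^2_{\ell})}\le c\,\|\bfg_{\ell}\|_{W^{2,2}(0,T;(\mathbf{W}^{1/2,2}_{\partial\om_{\ell}\cap\Gamma})^*)}$. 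For the initial value, the standard Lions--Magenes trace theorem applied to $\partial_t\bfg_{\ell}\in L^2(0,T;\mathbf{W}^{1/2,2})$ with $\partial_t^2\bfg_{\ell}\in L^2(0,T;(\mathbf{W}^{1/2,2})^*)$ yields $\partial_t\bfg_{\ell}(0)\in[\mathbf{W}^{1/2,2},(\mathbf{W}^{1/2,2})^*]_{1/2}=\mathbf{L}^2_{\partial\om_{\ell}\cap\Gamma}$ continuously; composing with $\mathcal{E}:\mathbf{L}^2\to\mathbf{W}^{3/2,2}\hookrightarrow\mathbf{W}^{1,2}$ shows $\tilde{\bff}_{\ell}(0)=-\beta^{ji}_{\ell}\,\mathcal{E}(\partial_t\bfg_{\ell}(0))^i\in\mathbf{W}^{1,2}_{\ell}$, again with the stated bound. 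Hence $\|\tilde{\bff}_{\ell}\|_{\mathcal{Y}_{\ell,T}}\le c\,(\|\bfg_{\ell}\|_{W^{2,2}(0,T;(\mathbf{W}^{1/2,2}_{\partial\om_{\ell}\cap\Gamma})^*)}+\|\bfg_{\ell}\|_{W^{1,2}(0,T;\mathbf{W}^{1/2,2}_{\partial\om_{\ell}\cap\Gamma})})$.

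By Lemma \ref{lm_sup_10} there is a unique $\bfz_{\ell}\in\mathcal{X}_{\ell,T}$ solving Problem $(P_f)$ with source $\tilde{\bff}_{\ell}$ and $\|\bfz_{\ell}\|_{\mathcal{X}_{\ell,T}}\le c\,\|\tilde{\bff}_{\ell}\|_{\mathcal{Y}_{\ell,T}}$. The mapping properties of $\mathcal{E}$ together with the time regularity of $\bfg_{\ell}$ also give $\bfw_{\ell}\in\mathcal{X}_{\ell,T}$ (indeed $\bfw_{\ell}'=\mathcal{E}(\partial_t\bfg_{\ell})\in L^{\infty}(0,T;\mathbf{W}^{3/2,2})\cap L^2(0,T;\mathbf{W}^{2,2})$ and $\bfw_{\ell}''=\mathcal{E}(\partial_t^2\bfg_{\ell})\in L^2(0,T;\mathbf{W}^{1,2})$, with $\bfw_{\ell}(0)=\bf0$), with the same bound by the data. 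Then $\bfu_{\ell}:=\bfz_{\ell}+\bfw_{\ell}\in\mathcal{X}_{\ell,T}$ is the desired strong solution of $(P_g)$, and the triangle inequality yields \eqref{eq11c}. Uniqueness follows because any two solutions differ by a solution of $(P_g)$ with $\bfg_{\ell}\equiv\bf0$; then $\bfw_{\ell}\equiv\bf0$, the difference solves $(P_f)$ with zero source, and Lemma \ref{lm_sup_10} forces it to vanish. The principal difficulty is analytic rather than structural: it lies in the sharp mapping properties of the stationary lifting $\mathcal{E}$ on the composite domain --- in particular the interpolated gain $\mathcal{E}:\mathbf{L}^2\to\mathbf{W}^{3/2,2}$ across the material interfaces and near the boundary--interface vertices, which is exactly the content of \ref{regularity_stationary} and relies on the restriction $\omega_A\le\pi$ in (viii) --- whereas the matching of the temporal trace spaces via the Lions--Magenes theorem is routine once these elliptic estimates are in hand.
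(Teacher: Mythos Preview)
Your argument is correct and proceeds along a genuinely different route from the paper's. You homogenise the Newton data by the stationary lifting $\mathcal{E}$ and then invoke Lemma~\ref{lm_sup_10} for the remainder $\bfz_{\ell}$; the paper, by contrast, never introduces a lifting. In Appendix~\ref{proof_2} the authors work directly with the second time derivative: they pose a weak problem for $\bfxi_{\ell}:=\bfu''_{\ell}$ with right-hand side the duality pairing $\langle\bfG''_{\ell};\cdot\rangle$, run the same Rothe scheme as in Appendix~\ref{proof_1} to obtain $\bfxi_{\ell}\in L^2(0,T;\mathbf{W}^{1,2}_{\ell})$, then bootstrap the missing pieces ($\bfu'_{\ell}\in L^{\infty}(0,T;\mathbf{L}^2_{\ell})$ from standard parabolic theory, $\bfu'_{\ell}\in L^2(0,T;\mathbf{W}^{2,2}_{\ell})$ from Corollary~\ref{corollary_appendix}). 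Both proofs ultimately rest on the elliptic transmission regularity of Appendix~\ref{regularity_stationary}; your version packages that input as mapping properties of $\mathcal{E}$ and interpolates, while the paper feeds it in pointwise in $t$ after the time-discretisation.

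What your approach buys is modularity: once Lemma~\ref{lm_sup_10} is in hand, $(P_g)$ follows without repeating any Rothe machinery, and the Lions--Magenes trace identification $\partial_t\bfg_{\ell}(0)\in\mathbf{L}^2_{\partial\Omega_{\ell}\cap\Gamma}$ is a clean way to verify $\tilde{\bff}_{\ell}(0)\in\mathbf{W}^{1,2}_{\ell}$. What the paper's approach buys is that it avoids the (mild) extra ingredients you need---the interpolated mapping $\mathcal{E}:\mathbf{L}^2\to\mathbf{W}^{3/2,2}_{\ell}$ and the intermediate trace theorem---since their estimates stay at the endpoints $(\mathbf{W}^{1/2,2})^*$ and $\mathbf{W}^{1/2,2}$ throughout. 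Two small remarks: the coercivity of the stationary bilinear form underlying $\mathcal{E}$ is not quite immediate from \eqref{lingen_par2} with the naive test function $\bfv=\bfw$ (the matrix $\kappa^{ji}_{\ell}$ is not symmetric); one needs the weighted test $[\kappa^{21}_{\ell}\varphi^1,\kappa^{12}_{\ell}\varphi^2]$ used in Appendix~\ref{proof_1}, or simply the blanket well-posedness hypothesis stated at the start of Appendix~\ref{regularity_stationary}. Also, your interpolation step $\mathcal{E}:\mathbf{L}^2\to\mathbf{W}^{3/2,2}_{\ell}$ is pleasant but not strictly needed, since $\mathbf{L}^2_{\partial\Omega_{\ell}\cap\Gamma}\hookrightarrow(\mathbf{W}^{1/2,2}_{\partial\Omega_{\ell}\cap\Gamma})^*$ already gives $\mathcal{E}(\partial_t\bfg_{\ell}(0))\in\mathbf{W}^{1,2}_{\ell}$, which is all $\mathcal{Y}_{\ell,T}$ demands.
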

\begin{proof}
See \ref{proof_2}. Similarly as in the proof of Lemma
\ref{lm_sup_10}, we use the results for stationary transmission
problem presented in \ref{regularity_stationary}.
\end{proof}

\begin{proof}[Proof of Theorem \ref{thm_lin}]
The assertion follows from the superposition principle of the
solutions to the linear Problems ($P_f$) and ($P_g$).
\end{proof}

%-------------------------------------------------------------------------

\section{Solutions to the Nonlinear Parabolic System}\label{NonlinearProblem}

\begin{dfn}[Problem ($P_0$)] Let us define Problem ($P_0$) by the
initial--boundary value transmission system
\eqref{eq01}--\eqref{eq03} with data and structure conditions
 satisfying the assumptions (A1)--(A4), see Subsection
\ref{structure_conditions}.
\end{dfn}
\begin{dfn}
A function $\bfu_{\ell}$, such that $\bfu'_{\ell}(t) \in
L^2(0,T;\mathbf{W}^{2,2}_{\ell})$ and $\bfu''_{\ell}(t) \in
L^2(0,T;\mathbf{L}^2_{\ell})$, is called a strong solution of
Problem ($P_0$) on $(0,T)$ with initial data $\bfmu_{\ell}\in
\mathbf{W}^{3,2}_{\ell}$ iff
\begin{multline*}
\sum_{\ell=1}^{M} \int_{\Omega_{\ell}}
b^{ji}_{\ell}(\bfu_{\ell})\frac{\partial u^i_{\ell}}{\partial t} \,
v^j \, {\rm d}\bfx + \sum_{\ell=1}^{M} \int_{\Omega_{\ell}}
a^{ji}_{\ell}(\bfu_{\ell})\nabla u^i_{\ell}\cdot\nabla
u^j_{\ell}\;{\rm d}\bfx
\\
+ \sum_{\ell=1}^{M} \int_{\partial\Omega_{\ell}\cap\Gamma}
\alpha^j_{\ell}({u}^j_{\ell}-\sigma^j)\, v^j \; {\rm d}S =
\sum_{\ell=1}^{M}\int_{\Omega_{\ell}}f^j_{\ell}(\bfu_{\ell}) \, v^j
\, {\rm d}\bfx
\end{multline*}
holds for every $\bfv\in\mathbf{W}^{1,2}$ and almost every $t\in
(0,T)$ and
\begin{equation*}
\bfu_{\ell}(\bfx,0)=\bfmu_{\ell}(\bfx) \; \textmd{ in }
\Omega_{\ell}.
\end{equation*}
\end{dfn}

%-------------------------------------------------------------------------------

\begin{thm}[Main result]\label{main_R_1}
Let the assumptions {\rm (A1)--(A4)} be satisfied. For a given
$\bfmu_{\ell}\in \mathbf{W}^{3,2}_{\ell}$, which is supposed to be
compatible with \eqref{eq02}--\eqref{transmission_conditions_01},
there exists $T^*\in(0,T]$ and a function $\bfu_{\ell}$ such that
$\bfu_{\ell}$ is the strong solution of Problem {\rm($P_0$)} on
$(0,T^*)$.
\end{thm}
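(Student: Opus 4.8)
The plan is to reformulate Problem $(P_0)$ as a fixed-point equation governed by the linear theory of Theorem \ref{thm_lin} and to solve it by the Banach contraction principle on a sufficiently short interval $(0,T^*)$. Since membership in $\mathcal{X}_{\ell,T}$ forces a vanishing initial value, I would first reduce to homogeneous initial data: fix a reference function $\bfU_\ell$ with $\bfU_\ell(\cdot,0)=\bfmu_\ell$ and the same time-regularity as elements of $\mathcal{X}_{\ell,T}$ (realizing also the first-order compatibility data of Remark \ref{compatibility_conditions}), and seek the solution as $\bfu_\ell=\bfU_\ell+\bfz_\ell$ with $\bfz_\ell\in\mathcal{X}_{\ell,T}$.

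The essential structural choice is to freeze the leading coefficients at the initial datum $\bfmu_\ell$, which is what makes the frozen operator fall under the \emph{time-independent} linear theory of Theorem \ref{thm_lin}. Expanding the elliptic part, the nonlinear system reads $b^{ji}_\ell(\bfu_\ell)\partial_t u^i_\ell-a^{ji}_\ell(\bfu_\ell)\Delta u^i_\ell-\partial_{z^k}a^{ji}_\ell(\bfu_\ell)\nabla u^k_\ell\cdot\nabla u^i_\ell=f^j_\ell(\bfu_\ell)$, and I would define the map $\mathcal{T}\colon B_R\to\mathcal{X}_{\ell,T}$ on a closed ball $B_R\subset\mathcal{X}_{\ell,T}$ as follows: with $\bfu_\ell:=\bfU_\ell+\bfz_\ell$, let $\mathcal{T}(\bfz_\ell)$ be the solution furnished by Theorem \ref{thm_lin} of the linear system \eqref{eqlin01}--\eqref{eqlin05} (for the increment) with the time-independent coefficients $\beta^{ji}_\ell=b^{ji}_\ell(\bfmu_\ell)$, $\kappa^{ji}_\ell=a^{ji}_\ell(\bfmu_\ell)$, $\nu^j_\ell=\alpha^j_\ell$, boundary source $\alpha^j_\ell\sigma^j$, and volume right-hand side collecting the nonlinear discrepancies
\begin{multline*}
F^j_\ell[\bfz_\ell]=f^j_\ell(\bfu_\ell)+\bigl(b^{ji}_\ell(\bfmu_\ell)-b^{ji}_\ell(\bfu_\ell)\bigr)\partial_t u^i_\ell+\bigl(a^{ji}_\ell(\bfu_\ell)-a^{ji}_\ell(\bfmu_\ell)\bigr)\Delta u^i_\ell\\+\partial_{z^k}a^{ji}_\ell(\bfu_\ell)\nabla u^k_\ell\cdot\nabla u^i_\ell-\nabla a^{ji}_\ell(\bfmu_\ell)\cdot\nabla u^i_\ell,
\end{multline*}
from which the linear residual of the shift $\bfU_\ell$ has been subtracted. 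Because the structure conditions \eqref{gen_par1}--\eqref{gen_par2} are imposed exactly at $\bfmu_\ell$ throughout $\overline{\Omega}$, the frozen coefficients verify the linear parabolicity and ellipticity \eqref{lingen_par1}--\eqref{lingen_par2}, so $\mathcal{T}$ is well defined, and any fixed point $\bfz_\ell=\mathcal{T}(\bfz_\ell)$ yields a strong solution $\bfu_\ell=\bfU_\ell+\bfz_\ell$ of Problem $(P_0)$.

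The self-mapping and contraction of $\mathcal{T}$ on $B_R$ for small $T^*$ then follow from the a priori bound \eqref{eq11a}, namely $\|\mathcal{T}(\bfz_\ell)\|_{\mathcal{X}_{\ell,T}}\le c\,\|F_\ell[\bfz_\ell]\|_{\mathcal{Y}_{\ell,T}}+(\text{boundary terms})$, together with the interpolation inequalities \eqref{odh1}, \eqref{odh2}, \eqref{odh3} and the boundedness of the derivatives of $B^j_\ell,a^{ji}_\ell,f^j_\ell$ from (A1)--(A3). The required smallness is produced by two effects: the explicit vanishing factors $C=C(T)\to0_+$ entering through estimates such as $\|\bfz_\ell\|_{L^\infty(0,T;\mathbf{L}^\infty_\ell)}\le cT^{1/2}\|\bfz_\ell\|_{\mathcal{X}_{\ell,T}}$ in \eqref{odh2}, and the identities $\bfz_\ell(0)=\mathbf{0}$, $\bfu_\ell(0)=\bfmu_\ell$, which force every coefficient difference $a^{ji}_\ell(\bfu_\ell)-a^{ji}_\ell(\bfmu_\ell)$ and $b^{ji}_\ell(\bfu_\ell)-b^{ji}_\ell(\bfmu_\ell)$ to vanish at $t=0$ and hence to be of order $C(T^*)$ in $L^\infty(Q_{\ell T^*})$. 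The very same computation applied to $F_\ell[\bfz_\ell^{(1)}]-F_\ell[\bfz_\ell^{(2)}]$ gives a Lipschitz constant $L(T^*)\to0$, whence for $T^*$ small enough $\mathcal{T}$ is a contraction and the fixed point exists and is unique.

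The technical core, and the principal obstacle, is the estimate of $F_\ell[\bfz_\ell]$ in $\mathcal{Y}_{\ell,T}$, whose norm controls $\partial_t F_\ell$ in $L^2(0,T;\mathbf{L}^2_\ell)$ and the trace $F_\ell(\cdot,0)$ in $\mathbf{W}^{1,2}_\ell$. Differentiating a representative term such as $\bigl(a^{ji}_\ell(\bfu_\ell)-a^{ji}_\ell(\bfmu_\ell)\bigr)\Delta u^i_\ell$ in time produces $\partial_{z^k}a^{ji}_\ell(\bfu_\ell)\,\partial_t u^k_\ell\,\Delta u^i_\ell+\bigl(a^{ji}_\ell(\bfu_\ell)-a^{ji}_\ell(\bfmu_\ell)\bigr)\Delta\partial_t u^i_\ell$; the second summand is absorbed because $\Delta\partial_t u^i_\ell\in L^2(0,T;\mathbf{L}^2_\ell)$ is multiplied by a coefficient difference of order $C(T^*)$, while the first is handled by pairing $\partial_t u^k_\ell\in L^3(0,T;\mathbf{L}^\infty_\ell)$ (inequality \eqref{odh3}) with $\Delta u^i_\ell\in L^\infty(0,T;\mathbf{L}^2_\ell)$, which gains a further power of $T^*$. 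The trace at $t=0$, once $\bfz_\ell(0)=\mathbf{0}$ is used, reduces to quantities depending on $\bfmu_\ell$ alone, and their membership in $\mathbf{W}^{1,2}_\ell$ is exactly what $\bfmu_\ell\in\mathbf{W}^{3,2}_\ell$ and its assumed compatibility with \eqref{eq02}--\eqref{transmission_conditions_01} guarantee; the same compatibility ensures that the effective boundary source meets the hypothesis $\bfg_\ell(\cdot,0)=\mathbf{0}$ of Theorem \ref{thm_lin}. Bookkeeping all such quadratic and gradient products through \eqref{odh1}--\eqref{odh3} is where the bulk of the work lies.
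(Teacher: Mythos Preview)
Your proposal is correct and follows essentially the same route as the paper: freeze the principal coefficients at the initial datum $\bfmu_\ell$, rewrite the nonlinear problem as a fixed-point equation for the increment in $\mathcal{X}_{\ell,T}$, invoke Theorem~\ref{thm_lin} for the linear solution operator, and close via the Banach contraction principle using the interpolation inequalities \eqref{odh1}--\eqref{odh3} to extract powers of $T$. The paper organizes this through the operator $\mathscr{K}$ of Definition~\ref{operator_K} and Lemmas~\ref{lem010}--\ref{lem011}, which are exactly the self-mapping and Lipschitz estimates you outline.

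One simplification worth noting: the paper does \emph{not} introduce a time-dependent reference $\bfU_\ell$ but takes the bare shift $\bfu_\ell=\bfmu_\ell+\bfy_\ell$ with $\bfmu_\ell$ constant in time. This suffices because $\bfmu_\ell\in\mathbf{W}^{3,2}_\ell$ already yields $\nabla\cdot[a^{ji}_\ell(\bfmu_\ell)\nabla\mu^i_\ell]\in\mathbf{W}^{1,2}_\ell$ and $f^j_\ell(\bfmu_\ell)\in\mathbf{W}^{1,2}_\ell$, so the trace $\mathscr{K}(\bfy_\ell)|_{t=0}$ lands in $\mathbf{W}^{1,2}_\ell$ without further construction; the first-order compatibility of Remark~\ref{compatibility_conditions} is then absorbed into the hypothesis that $\bfmu_\ell$ is compatible with \eqref{eq02}--\eqref{transmission_conditions_01}. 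Your more general $\bfU_\ell$ would work too but adds bookkeeping (extra terms from $\bfU_\ell'$) that the paper avoids. Likewise, the paper keeps $\mathscr{K}$ in divergence form in its definition and only expands the Laplacian when proving Lemma~\ref{lem010}, whereas you expand up front; the two presentations are equivalent.
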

Proof of the main result is postponed to the end of this section. We
start from a related problem with homogeneous initial condition. To
that end, let $\bfu_{\ell}$ be the strong solution of Problem
($P_0$) on $(0,T)$, $ \bfu_{\ell}=\bfmu_{\ell}+\bfy_{\ell}$. Then
$\bfy_{\ell} \in \mathcal{X}_{\ell,T}$ and the following equation
\begin{multline*}
\sum_{\ell=1}^{M} \int_{\Omega_{\ell}}
b^{ji}_{\ell}(\bfmu_{\ell}+\bfy_{\ell})\frac{\partial
y^i_{\ell}}{\partial t} \, v^j \, {\rm d}\bfx + \sum_{\ell=1}^{M}
\int_{\Omega_{\ell}} a^{ji}_{\ell}(\bfmu_{\ell}+\bfy_{\ell})
\nabla(\mu^i_{\ell}+y^i_{\ell})\cdot\nabla v^j \;{\rm d}\bfx \\
+ \sum_{\ell=1}^{M} \int_{\partial\Omega_{\ell}\cap\Gamma}
\alpha^j_{\ell}(\mu^j_{\ell}+y^j_{\ell}-\sigma^j)\, v^j \; {\rm
d}S = \sum_{\ell=1}^{M}\int_{\Omega_{\ell}}
f^j_{\ell}(\bfmu_{\ell}+\bfy_{\ell}) \, v^j \, {\rm d}\bfx
\end{multline*}
holds for every $\bfv\in\mathbf{W}^{1,2}$ and almost every $t\in
(0,T)$. %and
%\begin{equation*}
%\bfy_{\ell}(\bfx,0)={\bf0} \; {\textmd{ in }} \Omega_{\ell}.
%\end{equation*}
This amounts to solving the problem with shifted data
\begin{eqnarray*}
\widehat{b}^{\,ji}_{\ell}(\bfx,\bfy_{\ell})&=&b^{\,ji}_{\ell}(\bfy_{\ell}+\bfmu_{\ell}),\\
\widehat{a}^{ji}_{\ell}(\bfx,\bfy_{\ell})&=&{a}^{ji}_{\ell}(\bfy_{\ell}+\bfmu_{\ell}),\\
\widehat{f}^{\,j}_{\ell}(\bfx,\bfy_{\ell})&=&f^j_{\ell}(\bfmu_{\ell}
+\bfy_{\ell}).
\end{eqnarray*}
We often omit the argument ``$\bfx$'' writing shortly
$\widehat{a}^{ji}_{\ell}(\bfy_{\ell})$ instead of
$\widehat{a}^{ji}_{\ell}(\bfx,\bfy_{\ell})$,
$\widehat{b}^{ji}_{\ell}(\bfy_{\ell})$ instead of
$\widehat{b}^{ji}_{\ell}(\bfx,\bfy_{\ell})$ and
$\widehat{f}^{j}_{\ell}(\bfy_{\ell})$ instead of
$\widehat{f}^{j}_{\ell}(\bfx,\bfy_{\ell})$.
\begin{dfn}\label{operator_K}
Define the operator $\mathscr{K}: \mathcal{X}_{\ell,T}\rightarrow
\mathcal{Y}_{\ell,T}$ given by
\begin{eqnarray}
\sum_{\ell=1}^{M}\int_{\Omega_{\ell}} \mathscr{K}(\bfphi_{\ell})
\cdot \bfv \; {\rm d}\bfx &=& \sum_{\ell=1}^{M} \int_{\Omega_{\ell}}
\left(  \widehat{b}^{\,ji}_{\ell}({\bf0}) -
\widehat{b}^{\,ji}_{\ell}(\bfphi_{\ell}) \right)\frac{\partial
\phi^i_{\ell}}{\partial t} \, v^j \, {\rm d}\bfx
\nonumber \\
&& + \sum_{\ell=1}^{M} \int_{\Omega_{\ell}} \left(
\widehat{a}^{\,ji}_{\ell}({\bf0}) -
\widehat{a}^{\,ji}_{\ell}(\bfphi_{\ell})\right) \nabla\phi^i_{\ell}
\cdot\nabla v^j \; {\rm d}\bfx
\nonumber\\
&&  - \sum_{\ell=1}^{M} \int_{\Omega_{\ell}}
\widehat{a}^{\,ji}_{\ell}(\bfphi_{\ell}) \nabla\mu^i_{\ell} \cdot
\nabla v^j \;{\rm d}\bfx
\nonumber\\
&&+ \sum_{\ell=1}^{M}\int_{\Omega_{\ell}}
\widehat{f}^{\,j}_{\ell}(\bfphi_{\ell}) \, v^j \, {\rm d}\bfx,
\label{eq230}
\end{eqnarray}
which holds for every $\bfv\in\mathbf{W}^{1,2}$ and almost every
$t\in(0,T)$.
\end{dfn}
\begin{rmk}\label{rem_homo}
Let $\bfu_{\ell}=\bfmu_{\ell}+\bfy_{\ell}$. The function
$\bfu_{\ell}$ is the strong solution of Problem $(P_0)$ on $(0,T)$
with initial data $\bfmu_{\ell}\in \mathbf{W}^{3,2}_{\ell}$ iff for
$\bfy_{\ell} \in \mathcal{X}_{\ell,T}$ the following equation
\begin{multline*}
\sum_{\ell=1}^{M} \int_{\Omega_{\ell}}
\widehat{b}^{\,ji}_{\ell}({\bf0})\frac{\partial y^i_{\ell}}{\partial
t} \, v^j \, {\rm d}\bfx + \sum_{\ell=1}^{M} \int_{\Omega_{\ell}}
\widehat{a}^{ji}_{\ell}({\bf0})\nabla y^i_{\ell}\cdot\nabla
v^j\;{\rm d}\bfx  + \sum_{\ell=1}^{M}
\int_{\partial\Omega_{\ell}\cap\Gamma}
\alpha^j_{\ell}y^j_{\ell}\, v^j \; {\rm d}S \\
+\sum_{\ell=1}^{M} \int_{\partial\Omega_{\ell}\cap\Gamma}
\alpha^j_{\ell}(\mu^j_{\ell}-\sigma^j)\, v^j \; {\rm d}S =
\sum_{\ell=1}^{M}\int_{\Omega_{\ell}} \mathscr{K}(\bfy_{\ell}) \cdot
\bfv \; {\rm d}\bfx
\end{multline*}
holds for every $\bfv\in\mathbf{W}^{1,2}$ and almost every $t\in
(0,T)$.
\end{rmk}
Before proceeding to the proof of the main result of the paper, we
prepare some auxiliary lemmas and propositions.

{ For a fixed $R>0$ define the closed ball $B_R(T) \subset
\mathcal{X}_{\ell,T}$
\begin{equation*}
B_R(T):= \left\{ \bfphi \in \mathcal{X}_{\ell,T}; \;
\|{\bfphi}\|_{\mathcal{X}_{\ell,T}} \leq R \right\} .
\end{equation*}}

\begin{lem}\label{lem010}
Let $\bfphi_{\ell} \in { B_R(T)}$. Then
\begin{equation}\label{eq600}
\left\| \mathscr{K}(\bfphi_{\ell})\right\|_{\mathcal{Y}_{\ell,T}}
\leq  c_1 C(T) \left(\|\bfphi_{\ell}\|^3_{\mathcal{X}_{\ell,T}} +
\|\bfphi_{\ell}\|^{2}_{\mathcal{X}_{\ell,T}}+\|\bfphi_{\ell}\|_{\mathcal{X}_{\ell,T}}\right)
+ c_2,
\end{equation}
where the function $C(T)\rightarrow 0_+$ for $T\rightarrow 0_+$ and
the constants $c_1,c_2>0$, both independent of $\bfphi_{\ell}$, do
not depend on $T$.
\end{lem}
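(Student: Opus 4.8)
The plan is to estimate the two constituents of the $\mathcal{Y}_{\ell,T}$-norm separately, since by definition $\|\mathscr{K}(\bfphi_{\ell})\|_{\mathcal{Y}_{\ell,T}}=\|(\mathscr{K}(\bfphi_{\ell}))'\|_{L^2(0,T;\mathbf{L}^2_{\ell})}+\|\mathscr{K}(\bfphi_{\ell})(0)\|_{\mathbf{W}^{1,2}_{\ell}}$. The initial-value term will be responsible for the additive constant $c_2$, while the time-derivative term will produce the cubic polynomial in $\|\bfphi_{\ell}\|_{\mathcal{X}_{\ell,T}}$ carrying the vanishing factor $C(T)$.

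First I would treat the initial value. Because $\bfphi_{\ell}\in\mathcal{X}_{\ell,T}$ satisfies $\bfphi_{\ell}(0)={\bf0}$, the two difference coefficients in \eqref{eq230} vanish at $t=0$ (that is, $\widehat{b}^{ji}_{\ell}({\bf0})-\widehat{b}^{ji}_{\ell}(\bfphi_{\ell}(0))=0$ and likewise for $\widehat{a}$), and moreover $\nabla\bfphi_{\ell}(0)={\bf0}$. Hence only the last two lines of \eqref{eq230} survive at $t=0$, where they read $\widehat{a}^{ji}_{\ell}({\bf0})=a^{ji}_{\ell}(\bfmu_{\ell})$ and $\widehat{f}^{j}_{\ell}({\bf0})=f^{j}_{\ell}(\bfmu_{\ell})$; thus $\mathscr{K}(\bfphi_{\ell})(0)$ depends only on the fixed datum $\bfmu_{\ell}$ and equals (weakly) $\nabla\cdot(a^{ji}_{\ell}(\bfmu_{\ell})\nabla\mu^i_{\ell})+f^j_{\ell}(\bfmu_{\ell})$. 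Using $\bfmu_{\ell}\in\mathbf{W}^{3,2}_{\ell}$, the boundedness of $\partial^{\alpha}a^{ji}_{\ell}$ and $\partial^{\alpha}f^{j}_{\ell}$ from {\rm (A2)--(A3)}, and the fact that $\mathbf{W}^{2,2}_{\ell}$ is a Banach algebra in two dimensions, one checks that this quantity lies in $\mathbf{W}^{1,2}_{\ell}$ with norm controlled by $\|\bfmu_{\ell}\|_{\mathbf{W}^{3,2}_{\ell}}$; this is exactly the $\bfphi_{\ell}$- and $T$-independent constant $c_2$.

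For the time-derivative part I would differentiate \eqref{eq230} in $t$ under the integral sign (the test function $\bfv$ is time independent), apply the chain rule to the coefficients, and integrate by parts in space in the two summands that are paired with $\nabla\bfv$, so that $(\mathscr{K}(\bfphi_{\ell}))'$ is represented as a genuine $\mathbf{L}^2_{\ell}$-function for a.e.\ $t$; this representation is legitimate because $\bfphi_{\ell}'(t)\in L^2(0,T;\mathbf{W}^{2,2}_{\ell})$ supplies the second spatial derivatives needed to give $\nabla\cdot(\widehat{a}^{ji}_{\ell}(\bfphi_{\ell})\nabla\partial_t\phi^i_{\ell})$-type quantities an $L^2$ meaning. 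Each resulting summand is a product of bounded coefficients $\partial^{\alpha}\widehat{b}^{ji}_{\ell}(\bfphi_{\ell}),\,\partial^{\alpha}\widehat{a}^{ji}_{\ell}(\bfphi_{\ell}),\,\partial^{\alpha}\widehat{f}^{j}_{\ell}(\bfphi_{\ell})$ — bounded via {\rm (A1)--(A3)}, since $\bfmu_{\ell}+\bfphi_{\ell}$ stays in a fixed bounded set thanks to \eqref{odh2} — times factors drawn from $\bfphi_{\ell},\nabla\bfphi_{\ell},\nabla^2\bfphi_{\ell},\bfphi_{\ell}',\nabla\bfphi_{\ell}',\bfphi_{\ell}''$ and the frozen data $\nabla\bfmu_{\ell},\nabla^2\bfmu_{\ell}$. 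I would bound each summand in $L^2(0,T;\mathbf{L}^2_{\ell})$ by H\"older in space, absorbing gradient products through \eqref{embedding_theorems} and the interpolation estimates \eqref{odh1} and \eqref{odh3}, and by H\"older in time, converting the surplus time integrability of $\bfphi_{\ell}'$ (which lies in $L^8(0,T;\mathbf{L}^4_{\ell})$ and $L^3(0,T;\mathbf{L}^\infty_{\ell})$) into a positive power of $T$. Grouping the summands by the number of factors that carry $\bfphi_{\ell}$ then yields the cubic, quadratic and linear contributions in $\|\bfphi_{\ell}\|_{\mathcal{X}_{\ell,T}}$.

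The delicate point, and where I expect the real work to lie, is producing the common vanishing factor $C(T)\to 0_+$ \emph{uniformly}, in particular in front of the linear term. The smallness is harvested from two sources, both rooted in $\bfphi_{\ell}(0)={\bf0}$: estimate \eqref{odh2}, together with its gradient analogue $\|\nabla\bfphi_{\ell}\|_{L^\infty(0,T;\mathbf{L}^p_{\ell})}\le cT^{1/2}\|\bfphi_{\ell}\|_{\mathcal{X}_{\ell,T}}$, shows $\bfphi_{\ell}$ and $\nabla\bfphi_{\ell}$ are $O(T^{1/2})$, while the time-H\"older step supplies the remaining powers of $T$ for summands containing $\bfphi_{\ell}'$. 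The terms demanding the most care are those linear in $\bfphi_{\ell}$ that pair a single high-order factor (such as $\nabla\bfphi_{\ell}'$) against the frozen initial gradient $\nabla\bfmu_{\ell}$ — the contribution of the third line of \eqref{eq230} — since there neither the sup-norm smallness of $\bfphi_{\ell}$ nor a naive H\"older split obviously yields a power of $T$, so the available regularity must be exploited at its sharpest (e.g.\ by splitting $\widehat{a}^{ji}_{\ell}(\bfphi_{\ell})$ about its value at ${\bf0}$). Once every summand has been shown to carry such a factor, summation and regrouping give \eqref{eq600}, with $c_1$ absorbing the coefficient and embedding constants.
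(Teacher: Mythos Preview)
Your proposal is correct and follows essentially the same route as the paper: split $\mathscr{K}(\bfphi_\ell)$ into the four summands of \eqref{eq230}, differentiate each in $t$ (with the chain rule bringing in $\partial_{\phi}\widehat a,\partial^2_{\phi}\widehat a,\partial_x\partial_{\phi}\widehat a$, etc.), integrate by parts in the divergence terms, and bound every product via H\"older in space combined with \eqref{embedding_theorems}, \eqref{odh1}, \eqref{odh2}, \eqref{odh3}. The initial-value contribution $\|\nabla\cdot[\widehat a^{ji}_\ell({\bf0})\nabla\mu^i_\ell]\|_{\mathbf W^{1,2}_\ell}+\|\widehat{\bff}_\ell({\bf0})\|_{\mathbf W^{1,2}_\ell}$ is exactly the paper's $c_2$ (see \eqref{est080} and \eqref{est090}).

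One small correction to your closing paragraph: no extra ``splitting of $\widehat a^{ji}_\ell(\bfphi_\ell)$ about ${\bf0}$'' is needed for the linear terms coupling $\nabla\bfphi_\ell'$ (or $\bfphi_\ell'$) with $\nabla\bfmu_\ell$, $\Delta\bfmu_\ell$. The paper handles these directly by H\"older \emph{in time}, upgrading the $L^2(0,T)$ integral to $L^3(0,T)$ at the cost of a $T^{1/6}$ factor and then invoking \eqref{odh3}; see \eqref{est082}--\eqref{est083b} and the linear piece of \eqref{est068b}. Since you already list \eqref{odh3} and the time-H\"older step among your tools, this is only a matter of emphasis, not a gap.
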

\begin{proof} The proof is rather technical. To derive the estimate \eqref{eq600}
we extensively use the embeddings and estimates
\eqref{embedding_theorems}--\eqref{odh3}. First, for all
$\bfphi_{\ell} \in  { B_R(T)}$ we have
\begin{multline}\label{eq:split_estimate}
\left\| \mathscr{K}(\bfphi_{\ell})\right\|_{\mathcal{Y}_{\ell,T}}
\leq \left\| \left(  \widehat{\bfb}_{\ell}({\bf0}) -
\widehat{\bfb}_{\ell}(\bfphi_{\ell}) \right) \bfphi'_{\ell}(t)
\right\|_{\mathcal{Y}_{\ell,T}} \!\!+
\left\|\nabla\!\!\cdot\!\left[\left( \widehat{a}^{ji}_{\ell}({\bf0})
- \widehat{a}^{ji}_{\ell}(\bfphi_{\ell})\right)\!\nabla\phi^i_{\ell}
\right]\right\|_{\mathcal{Y}_{\ell,T}}
\\
 + \left\|\nabla\cdot\left[ \widehat{a}^{ji}_{\ell}(\bfphi_{\ell})
\nabla \mu^i_{\ell} \right] \right\|_{\mathcal{Y}_{\ell,T}}+\left\|
\widehat{\bff}_{\ell}(\bfphi_{\ell})
 \right\|_{\mathcal{Y}_{\ell,T}}.
\end{multline}
Now, we have to estimate each term on the right-hand side of
\eqref{eq:split_estimate}. Successively, we use \eqref{odh1} and
(A1) (see Subsection \ref{structure_conditions}) to estimate the
first term:
\begin{eqnarray}\label{est015}
\left\| \left(  \widehat{\bfb}_{\ell}({\bf0}) -
\widehat{\bfb}_{\ell}(\bfphi_{\ell}) \right) \bfphi'_{\ell}(t)
\right\|_{\mathcal{Y}_{\ell,T}} &\leq&  \left\| \left(
\widehat{\bfb}_{\ell}({\bf0}) - \widehat{\bfb}_{\ell}(\bfphi_{\ell})
\right) \bfphi''_{\ell}(t) \right\|_{L^2(Q_{\ell T})^2}
\nonumber \\
&&+ \left\| \frac{\partial \widehat{b}^{ji}_{\ell}(\bfphi_{\ell})}
{\partial \phi^l_{\ell}} (\phi^l_{\ell})'(t) (\phi^i_{\ell})'(t)
\right\|_{L^2(Q_{\ell T})^2}
\nonumber \\
&\leq&  c_1\|\bfphi_{\ell}\|_{L^{\infty}(Q_{\ell
T})^2}\|\bfphi''_{\ell}(t)\|_{L^2(Q_{\ell T})^2}
\nonumber \\
&&+ c_2 \|\bfphi'_{\ell}(t)\|_{L^4(Q_{\ell T})^2}
\nonumber \\
&\leq& c_1 T^{1/2}\,\|\bfphi_{\ell}\|^2_{\mathcal{X}_{\ell,T}} +
c_2T^{1/4}\,\|\bfphi_{\ell}'(t)\|^2_{L^{8}(0,T;\mathbf{L}^{4}_{\ell})}
\nonumber \\
&\leq& c_1 T^{1/2}\,\|\bfphi_{\ell}\|^2_{\mathcal{X}_{\ell,T}} +
c_2T^{1/4}\,\|\bfphi_{\ell}\|^2_{\mathcal{X}_{\ell,T}}.
\nonumber \\
\end{eqnarray}
Similarly, estimating the second term in \eqref{eq:split_estimate}
in the norm of the space $\mathcal{Y}_{\ell,T}$ we arrive at
\begin{eqnarray}\label{ineq010}
\left\|\nabla\cdot\left[ \left( \widehat{a}^{ji}_{\ell}({\bf0}) -
\widehat{a}^{ji}_{\ell}(\bfphi_{\ell}) \right)\nabla\phi^i_{\ell}
\right]\right\|_{\mathcal{Y}_{\ell,T}} \!\!\! &\!\!\! \leq& \left\|
\frac{\partial^2 \widehat{a}^{ji}_{\ell}(\bfphi_{\ell})}{\partial
\phi^l_{\ell} \partial \phi^r_{\ell}}(\phi^r_{\ell})'(t) \nabla
\phi^l_{\ell}\cdot\nabla\phi^i_{\ell}\right\|_{L^2(Q_{\ell T})^2}
\nonumber\\
& + & \left\|\frac{\partial
\widehat{a}^{ji}_{\ell}(\bfphi_{\ell})}{\partial \phi^l_{\ell} }
\nabla [(\phi^l_{\ell})'(t)]\cdot\nabla\phi^i_{\ell}
\right\|_{L^2(Q_{\ell T})^2}
\nonumber\\
& + & \left\|\frac{\partial^2
\widehat{a}^{ji}_{\ell}(\bfx,\bfphi_{\ell})}{\partial x^k \partial
\phi^l_{\ell} } (\phi^l_{\ell})'(t)
\frac{\partial\phi^i_{\ell}}{\partial x^k} \right\|_{L^2(Q_{\ell
T})^2}
\nonumber\\
& + & \left\|\frac{\partial
\widehat{a}^{ji}_{\ell}(\bfphi_{\ell})}{\partial \phi^l_{\ell} }
(\phi^l_{\ell})'(t)\Delta\phi^i_{\ell} \right\|_{L^2(Q_{\ell T})^2}
\nonumber\\
& + & \left\| \left[ \widehat{\bfa}_{\ell}({\bf0}) -
\widehat{\bfa}_{\ell}(\bfphi_{\ell}) \right] \Delta
\bfphi'_{\ell}(t) \right\|_{L^2(Q_{\ell T})^2}
\nonumber\\
& +& \left\|\nabla\!\! \left[\widehat{a}^{ji}_{\ell}({\bf0})
-\widehat{a}^{ji}_{\ell}(\bfx,\bfphi_{\ell}) \right]\!\cdot\! \nabla
(\phi^i_{\ell})'\right\|_{L^2(Q_{\ell T})^2}.
\nonumber\\
\end{eqnarray}
The first integral on the right hand side in \eqref{ineq010} can be
estimated
\begin{eqnarray*}
\left\| \frac{\partial^2
\widehat{a}^{ji}_{\ell}(\bfphi_{\ell})}{\partial \phi^l_{\ell}
\partial \phi^r_{\ell}}(\phi^r_{\ell})'(t) \nabla
\phi^l_{\ell}\cdot\nabla\phi^i_{\ell}\right\|^2_{L^2(Q_{\ell T})^2}
& \leq & c \int_0^T\left(\int_{\Omega_{\ell}}
\left|\bfphi'_{\ell}(t)\right|^2\left|\nabla\bfphi_{\ell}\right|^4
{\rm d}\bfx \right){\rm d}t
\nonumber \\
& \leq & c \int_0^T  \left\|
\bfphi'_{\ell}(t)\right\|^2_{\mathbf{L}^{4}_{\ell}} \left\|
\bfphi_{\ell} \right\|^4_{\mathbf{W}^{1,8}_{\ell}}  \, {\rm d} t
\nonumber \\
& \leq & c\left\| \bfphi'_{\ell}(t)
\right\|^{2}_{L^{2}(0,T;\mathbf{L}^{4}_{\ell})} \left\|\bfphi_{\ell}
\right\|^4_{L^{\infty}(0,T;\mathbf{W}^{1,8}_{\ell})}
\nonumber \\
&\leq&cT^{3/4}\!\!\left\|\bfphi'_{\ell}(t)
\right\|^2_{L^{8}(0,T;\mathbf{L}^{4}_{\ell})}
 T^{1/2}\!\!\left\|\bfphi_{\ell}\right\|^4_{\mathcal{X}_{\ell,T}}
\end{eqnarray*}
and applying \eqref{odh1} we obtain
\begin{equation}\label{est065}
\left\| \frac{\partial^2
\widehat{a}^{ji}_{\ell}(\bfphi_{\ell})}{\partial \phi^l_{\ell}
\partial \phi^r_{\ell}}(\phi^r_{\ell})'(t) \nabla
\phi^l_{\ell}\cdot\nabla\phi^i_{\ell}\right\|_{L^2(Q_{\ell T})^2}
\leq
 c \, T^{5/8}\,\|\bfphi_{\ell}\|^3_{\mathcal{X}_{\ell,T}}.
\end{equation}
Similarly
\begin{eqnarray*}
\left\|\frac{\partial
\widehat{a}^{ji}_{\ell}(\bfphi_{\ell})}{\partial \phi^l_{\ell} }
\nabla [(\phi^l_{\ell})'(t)]\cdot\nabla\phi^i_{\ell}
\right\|^2_{L^2(Q_{\ell T})^2} & \leq & c
\int_0^T\left(\int_{\Omega_{\ell}}
\left|\nabla\bfphi'_{\ell}(t)\right|^2\left|\nabla\bfphi_{\ell}\right|^2
{\rm d}\bfx\right){\rm d}t
\nonumber \\
& \leq & c \int_0^T  \left\| \bfphi'_{\ell}(t)
\right\|^2_{\mathbf{W}^{1,3}_{\ell}} \left\| \bfphi_{\ell}
\right\|^2_{\mathbf{W}^{1,6}_{\ell}} {\rm d}t
\nonumber \\
& \leq & c \left\|
\bfphi'_{\ell}(t)\right\|^{2}_{L^2(0,T;\mathbf{W}^{1,3}_{\ell})}
\left\|\bfphi_{\ell}\right\|^2_{L^{\infty}(0,T;\mathbf{W}^{1,6}_{\ell})}
\nonumber \\
& \leq & c T^{1/3}\!\!\left\|
\bfphi'_{\ell}(t)\right\|^{2}_{L^3(0,T;\mathbf{W}^{1,3}_{\ell})}
 T^{1/2}\!\!\left\|\bfphi_{\ell}\right\|^2_{\mathcal{X}_{\ell,T}}
\end{eqnarray*}
and using \eqref{odh3} we get
\begin{equation}\label{est066}
\left\|\frac{\partial
\widehat{a}^{ji}_{\ell}(\bfphi_{\ell})}{\partial \phi^l_{\ell} }
\nabla [(\phi^l_{\ell})'(t)]\cdot\nabla\phi^i_{\ell}
\right\|_{L^2(Q_{\ell T})^2}  \leq
 c \, T^{5/12}\,\|\bfphi_{\ell}\|^2_{\mathcal{X}_{\ell,T}}.
\end{equation}
Similarly, the third term  in \eqref{ineq010} can be estimated as
\begin{equation}\label{est063}
\left\|\frac{\partial^2
\widehat{a}^{ji}_{\ell}(\bfx,\bfphi_{\ell})}{\partial x^k \partial
\phi^l_{\ell} } (\phi^l_{\ell})'(t)
\frac{\partial\phi^i_{\ell}}{\partial x^k} \right\|_{L^2(Q_{\ell
T})^2}  \leq
 c \, T^{5/12}\,\|\bfphi_{\ell}\|^2_{\mathcal{X}_{\ell,T}}.
\end{equation}
Further
\begin{eqnarray}\label{est060}
 \left\|\frac{\partial
\widehat{a}^{ji}_{\ell}(\bfphi_{\ell})}{\partial \phi^l_{\ell} }
(\phi^l_{\ell})'(t)\Delta\phi^i_{\ell} \right\|^2_{L^2(Q_{\ell
T})^2} & \leq & c \int_0^T\left(\int_{\Omega_{\ell}}
\left|\bfphi'_{\ell}(t)\right|^2\left|\Delta\bfphi_{\ell}\right|^2
\;{\rm d}\bfx \right) \, {\rm d} t
\nonumber \\
& \leq & c \int_0^T  \left\|
\bfphi'_{\ell}(t)\right\|^2_{\mathbf{L}^{\infty}_{\ell}} \left\|
\bfphi_{\ell} \right\|^2_{\mathbf{W}^{2,2}_{\ell}}{\rm d}t
\nonumber \\
& \leq & c\left\| \bfphi'_{\ell}(t)
\right\|^{2}_{L^{2}(0,T;\mathbf{L}^{\infty}_{\ell})}
\left\|\bfphi_{\ell}\right\|^2_{L^{\infty}(0,T;\mathbf{W}^{2,2}_{\ell})}
\nonumber \\
& \leq & c T^{1/3}\!\!\left\|
\bfphi'_{\ell}(t)\right\|^{2}_{L^3(0,T;\mathbf{L}^{\infty}_{\ell})}
 T^{1/2}\!\!\left\|\bfphi_{\ell}\right\|^2_{\mathcal{X}_{\ell,T}}.
 \quad
\end{eqnarray}
Now~\eqref{est060} and \eqref{odh3} imply
\begin{equation}\label{est067}
 \left\|\frac{\partial
\widehat{a}^{ji}_{\ell}(\bfphi_{\ell})}{\partial \phi^l_{\ell} }
(\phi^l_{\ell})'(t)\Delta\phi^i_{\ell} \right\|_{L^2(Q_{\ell T})^2}
\leq
 c \, T^{5/12}\,\|\bfphi_{\ell}\|^2_{\mathcal{X}_{\ell,T}}.
\end{equation}
\eqref{odh2} yields the estimate
\begin{eqnarray}\label{est068}
\left\| \left[ \widehat{\bfa}_{\ell}({\bf0}) -
\widehat{\bfa}_{\ell}(\bfphi_{\ell}) \right] \Delta
\bfphi'_{\ell}(t)\right\|_{L^2(Q_{\ell T})^2}
 &\leq&
 c \|\bfphi_{\ell}\|_{_{L^{\infty}(Q_{\ell T})^2}}
\|\Delta\bfphi'_{\ell}(t)\|_{L^2(Q_{\ell T})^2}
\nonumber \\
&\leq&  cT^{1/2}\|\bfphi_{\ell}\|_{\mathcal{X}_{\ell,T}}
\|\Delta\bfphi'_{\ell}(t)\|_{L^2(Q_{\ell T})^2}
\nonumber \\
&\leq& cT^{1/2}\|\bfphi_{\ell}\|^2_{\mathcal{X}_{\ell,T}},
\end{eqnarray}
where we have used the Lipschitz continuity of
$\widehat{\bfa}_{\ell}$. In the similar way one can deduce
\begin{multline}\label{est068b}
\left\|\nabla\!\! \left[\widehat{a}^{ji}_{\ell}({\bf0})
-\widehat{a}^{ji}_{\ell}(\bfphi_{\ell}) \right]\!\cdot\! \nabla
(\phi^i_{\ell})'(t)\right\|_{L^2(Q_{\ell T})^2}
\\
\leq
c_1\|\nabla\bfphi'_{\ell}(t)\|_{L^{2}(0,T;\mathbf{L}^{2}_{\ell})}+
c_2
\left\|\bfphi_{\ell}\right\|_{L^{\infty}(0,T;\mathbf{W}^{1,6}_{\ell})}
\left\| \bfphi'_{\ell}(t)\right\|_{L^2(0,T;\mathbf{W}^{1,3}_{\ell})}
\\
\leq c_1 T^{1/6}\left\|
\bfphi'_{\ell}(t)\right\|_{L^3(0,T;\mathbf{W}^{1,3}_{\ell})}  +c_2
 T^{1/4}\left\|\bfphi_{\ell}\right\|_{\mathcal{X}_{\ell,T}}
 T^{1/6}\left\|
\bfphi'_{\ell}(t)\right\|_{L^3(0,T;\mathbf{W}^{1,3}_{\ell})}
\\
\leq c_1 T^{1/6}\|\bfphi_{\ell}\|_{\mathcal{X}_{\ell,T}}+ c_2
T^{5/12}\|\bfphi_{\ell}\|^2_{\mathcal{X}_{\ell,T}}.
\end{multline}
Finally, the estimates \eqref{ineq010}--\eqref{est068b} imply
\begin{multline}\label{est070}
\left\|\nabla\cdot\left[ \left( \widehat{a}^{ji}_{\ell}({\bf0}) -
\widehat{a}^{ji}_{\ell}(\bfphi_{\ell}) \right)\nabla\phi^i_{\ell}
\right]\right\|_{\mathcal{Y}_{\ell,T}}
\\
 \leq  c C(T)
\left( \|\bfphi_{\ell}\|^3_{\mathcal{X}_{\ell,T}} +
\|\bfphi_{\ell}\|^2_{\mathcal{X}_{\ell,T}}+
\|\bfphi_{\ell}\|_{\mathcal{X}_{\ell,T}}\right),
\end{multline}
where the function $C(T)\rightarrow 0_+$ for $T\rightarrow 0_+$ and
$c$ is independent of $T$.

Further, estimating the third term on the right hand side in
\eqref{eq:split_estimate} one obtains
\begin{eqnarray}\label{est080}
\left\|\nabla\cdot\left[ \widehat{a}^{ji}_{\ell}(\bfphi_{\ell})
\nabla \mu^i_{\ell} \right] \right\|_{\mathcal{Y}_{\ell,T}} &\leq&
\left\| \frac{\partial^2
\widehat{a}^{ji}_{\ell}(\bfphi_{\ell})}{\partial \phi^l_{\ell}
\partial \phi^r_{\ell}}(\phi^r_{\ell})'(t) \nabla
\phi^l_{\ell}\cdot\nabla\mu^i_{\ell}\right\|_{L^2(Q_{\ell T})^2}
\nonumber\\
&+&\left\|\frac{\partial
\widehat{a}^{ji}_{\ell}(\bfphi_{\ell})}{\partial \phi^l_{\ell} }
\nabla [(\phi^l_{\ell})'(t)]\cdot\nabla\mu^i_{\ell}
\right\|_{L^2(Q_{\ell T})^2}
\nonumber\\
&+& \left\|\frac{\partial
\widehat{a}^{ji}_{\ell}(\bfphi_{\ell})}{\partial \phi^l_{\ell} }
(\phi^l_{\ell})'(t)\Delta\mu^i_{\ell} \right\|_{L^2(Q_{\ell T})^2}
 \nonumber\\
&+& \left\|\frac{\partial^2
\widehat{a}^{ji}_{\ell}(\bfx,\bfphi_{\ell})}{\partial x^k \partial
\phi^l_{\ell} } (\phi^l_{\ell})'(t)
\frac{\partial\mu^i_{\ell}}{\partial x^k} \right\|_{L^2(Q_{\ell T})}
\nonumber\\
&+&\left\|\nabla\cdot\left[ \widehat{a}^{ji}_{\ell}({\bf0}) \nabla
\mu^i_{\ell} \right] \right\|_{\mathbf{W}^{1,2}_{\ell}}.
\end{eqnarray}
Estimating each term on the right hand side we arrive at
\begin{multline}\label{est081}
\left\| \frac{\partial^2
\widehat{a}^{ji}_{\ell}(\bfphi_{\ell})}{\partial \phi^l_{\ell}
\partial \phi^r_{\ell}}(\phi^r_{\ell})'(t) \nabla
\phi^l_{\ell}\cdot\nabla\mu^i_{\ell}\right\|^2_{L^2(Q_{\ell T})^2}
\leq c \!\! \int_0^T \!\!\! \left\| \bfphi'_{\ell}(t)
\right\|^2_{\mathbf{L}^{4}_{\ell}}\left\| \nabla\bfphi_{\ell}
\right\|^2_{\mathbf{L}^{8}_{\ell}} \left\| \nabla\bfmu_{\ell}
\right\|^2_{\mathbf{L}^{8}_{\ell}}{\rm d}t
\\
\leq c T^{3/4}
\|\bfphi'_{\ell}(t)\|^2_{L^8(0,T;\mathbf{L}^{4}_{\ell})}
\|\bfphi_{\ell}\|^2_{L^{\infty}(0,T;\mathbf{W}^{1,8}_{\ell})}
\left\| \nabla\bfmu_{\ell} \right\|^2_{\mathbf{L}^{8}_{\ell}}
\\
\leq  c T^{5/4}\|\bfphi_{\ell}\|^4_{\mathcal{X}_{\ell,T}},
\end{multline}
where we have used the estimate
$\|\bfphi_{\ell}\|^2_{L^{\infty}(0,T;\mathbf{W}^{1,8}_{\ell})}\leq
T^{1/2}\|\bfphi_{\ell}\|^2_{\mathcal{X}_{\ell,T}}$. Further
\begin{eqnarray}\label{est082}
\left\|\frac{\partial
\widehat{a}^{ji}_{\ell}(\bfphi_{\ell})}{\partial \phi^l_{\ell} }
\nabla [(\phi^l_{\ell})'(t)]\cdot\nabla\mu^i_{\ell}
\right\|^2_{L^2(Q_{\ell T})^2} &\leq& c \int_0^T \left\|
\nabla\bfphi'_{\ell}(t) \right\|^2_{\mathbf{L}^{3}_{\ell}}\left\|
\nabla\bfmu_{\ell} \right\|^2_{\mathbf{L}^{6}_{\ell}} \, {\rm d} t
\nonumber \\
& \leq & c \, T^{1/3}
\|\bfphi'_{\ell}(t)\|^2_{L^{3}(0,T;\mathbf{W}^{1,3}_{\ell})} \left\|
\nabla\bfmu_{\ell} \right\|^2_{\mathbf{L}^{6}_{\ell}},
\nonumber \\
\end{eqnarray}
\begin{eqnarray}\label{est083}
\left\|\frac{\partial
\widehat{a}^{ji}_{\ell}(\bfphi_{\ell})}{\partial \phi^l_{\ell} }
(\phi^l_{\ell})'(t)\Delta\mu^i_{\ell} \right\|^2_{L^2(Q_{\ell T})^2}
&\leq& c \int_0^T \left\|\bfphi'_{\ell}(t)
\right\|^2_{\mathbf{L}^{\infty}_{\ell}} \left\|\Delta\bfmu_{\ell}
\right\|^2_{\mathbf{L}^{2}_{\ell}}{\rm d}t
\nonumber \\
&\leq& c \, T^{1/3}
\|\bfphi'_{\ell}(t)\|^2_{L^{3}(0,T;\mathbf{L}^{\infty}_{\ell})}
\left\| \Delta\bfmu_{\ell} \right\|^2_{\mathbf{L}^{2}_{\ell}}.
\nonumber \\
\end{eqnarray}
and finally
\begin{eqnarray}\label{est083b}
\left\|\frac{\partial^2
\widehat{a}^{ji}_{\ell}(\bfx,\bfphi_{\ell})}{\partial x^k \partial
\phi^l_{\ell} } (\phi^l_{\ell})'(t)
\frac{\partial\mu^i_{\ell}}{\partial x^k} \right\|^2_{L^2(Q_{\ell
T})^2} &\leq& c \int_0^T \left\|\bfphi'_{\ell}(t)
\right\|^2_{\mathbf{L}^{\infty}_{\ell}} \left\|\nabla\bfmu_{\ell}
\right\|^2_{\mathbf{L}^{2}_{\ell}}{\rm d}t
\nonumber \\
&\leq& c \, T^{1/3}
\|\bfphi'_{\ell}(t)\|^2_{L^{3}(0,T;\mathbf{L}^{\infty}_{\ell})}
\left\|\nabla\bfmu_{\ell}\right\|^2_{\mathbf{L}^{2}_{\ell}}.
\nonumber \\
\end{eqnarray}
The inequalities \eqref{est080}--\eqref{est083b} yield the estimate
\begin{equation}\label{est084}
\left\|\nabla\cdot\left[ \widehat{a}^{ji}_{\ell}(\bfphi_{\ell})
\nabla \mu^i_{\ell} \right] \right\|_{\mathcal{Y}_{\ell,T}} \leq
c_1T^{5/8}\,\|\bfphi_{\ell}\|^2_{\mathcal{X}_{\ell,T}}+c_2 \,
T^{1/6}\, \|\bfphi_{\ell}\|_{\mathcal{X}_{\ell,T}}+c_3,
\end{equation}
where the constant $c_3$ in \eqref{est084} bounds the last term in
\eqref{est080}. Finally, taking into account (A3), the source term
can be estimated as
\begin{eqnarray}\label{est090}
\left\| \widehat{\bff}_{\ell}(\bfphi_{\ell})
\right\|_{\mathcal{Y}_{\ell,T}} &=&  \left\|\frac{\partial
\widehat{f}^{j}_{\ell}(\bfphi_{\ell})}{\partial \phi^l_{\ell} }
(\phi^l_{\ell})'(t) \right\|_{L^2(Q_{\ell T})^2}+
\left\|\widehat{\bff}_{\ell}({\bf0})
\right\|_{\mathbf{W}^{1,2}_{\ell}}
\nonumber \\
& \leq &   c T^{1/6}
\|\bfphi'_{\ell}\|_{L^{3}(0,T;\mathbf{L}^{2}_{\ell})}+c_2
\nonumber \\
& \leq &   c T^{1/6} \|\bfphi_{\ell}\|_{\mathcal{X}_{\ell,T}}+c_2.
\end{eqnarray}
Altogether, the estimates \eqref{est015}, \eqref{est070},
\eqref{est084} and \eqref{est090} yield the inequality
\eqref{eq600}.
\end{proof}

%--------------------------------------------------------------------------------

\begin{lem}\label{lem011}
There exists a nondecreasing function $c(R)$ ($c(R)$ does not depend
on $T$, $\bfphi_{\ell}$ and $\widetilde{\bfphi}_{\ell}$) such that
for all $\bfphi_{\ell}, \widetilde{\bfphi}_{\ell} \in B_R(T)$
\begin{equation}\label{ineq030}
\| \mathscr{K}(\bfphi_{\ell})-
\mathscr{K}(\widetilde{\bfphi}_{\ell})\|_{\mathcal{Y}_{\ell,T}}\leq
 c(R) \, C(T) \|\bfphi_{\ell}-
 \widetilde{\bfphi}_{\ell}\|_{\mathcal{X}_{\ell,T}},
\end{equation}
where the function $C(T)\rightarrow 0_+$ for $T\rightarrow 0_+$.
\end{lem}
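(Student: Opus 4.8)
The plan is to mirror the proof of Lemma~\ref{lem010}, exploiting that the map $\mathscr{K}$ depends on $\bfphi_{\ell}$ only through the first two and the last two terms of \eqref{eq230}. First I would observe that the initial-trace part of the $\mathcal{Y}_{\ell,T}$-norm of the difference vanishes: since both $\bfphi_{\ell}(0)=\widetilde{\bfphi}_{\ell}(0)=\mathbf{0}$, the value $\mathscr{K}(\bfphi_{\ell})(0)$ is determined solely by the $\bfphi_{\ell}$-independent data (the terms $-\widehat{a}^{ji}_{\ell}(\mathbf{0})\nabla\mu^i_{\ell}$ and $\widehat{f}^{j}_{\ell}(\mathbf{0})$), so $\mathscr{K}(\bfphi_{\ell})(0)=\mathscr{K}(\widetilde{\bfphi}_{\ell})(0)$ and the $\mathbf{W}^{1,2}_{\ell}$-contribution to $\|\mathscr{K}(\bfphi_{\ell})-\mathscr{K}(\widetilde{\bfphi}_{\ell})\|_{\mathcal{Y}_{\ell,T}}$ is zero. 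It therefore suffices to bound $\|(\mathscr{K}(\bfphi_{\ell})-\mathscr{K}(\widetilde{\bfphi}_{\ell}))'(t)\|_{L^2(0,T;\mathbf{L}^2_{\ell})}$, which is also why the additive constants $c_2,c_3$ of \eqref{eq600} do not reappear here.

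Next I would subtract the defining identities \eqref{eq230} for $\mathscr{K}(\bfphi_{\ell})$ and $\mathscr{K}(\widetilde{\bfphi}_{\ell})$ and split the resulting expression exactly along the four groups of \eqref{eq:split_estimate}. In each group I would apply the elementary add-and-subtract device to isolate a single difference factor; for instance in the elliptic group I would write $(\widehat{a}^{ji}_{\ell}(\mathbf{0})-\widehat{a}^{ji}_{\ell}(\bfphi_{\ell}))\nabla\phi^i_{\ell}-(\widehat{a}^{ji}_{\ell}(\mathbf{0})-\widehat{a}^{ji}_{\ell}(\widetilde{\bfphi}_{\ell}))\nabla\widetilde{\phi}^i_{\ell}=-(\widehat{a}^{ji}_{\ell}(\bfphi_{\ell})-\widehat{a}^{ji}_{\ell}(\widetilde{\bfphi}_{\ell}))\nabla\phi^i_{\ell}+(\widehat{a}^{ji}_{\ell}(\mathbf{0})-\widehat{a}^{ji}_{\ell}(\widetilde{\bfphi}_{\ell}))\nabla(\phi^i_{\ell}-\widetilde{\phi}^i_{\ell})$, so that every summand carries precisely one factor of the form $\bfphi_{\ell}-\widetilde{\bfphi}_{\ell}$ (or one of its derivatives). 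Coefficient differences such as $\widehat{a}^{ji}_{\ell}(\bfphi_{\ell})-\widehat{a}^{ji}_{\ell}(\widetilde{\bfphi}_{\ell})$ would be handled by the mean value theorem together with {\rm (A1)--(A3)}, which bound the coefficients and their derivatives on bounded sets; since $\bfphi_{\ell},\widetilde{\bfphi}_{\ell}\in B_R(T)$ are uniformly bounded in $L^{\infty}(Q_{\ell T})^2$ by \eqref{odh2}, the relevant suprema are controlled by a nondecreasing function $c(R)$.

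Once the single difference factor is extracted, the remaining factors are estimated by repeating verbatim the interpolation and embedding inequalities \eqref{odh1}--\eqref{odh3} already used in Lemma~\ref{lem010}; these furnish the same positive powers of $T$ (collected into the factor $C(T)\to 0_+$), while the difference factor contributes linearly as $\|\bfphi_{\ell}-\widetilde{\bfphi}_{\ell}\|_{\mathcal{X}_{\ell,T}}$. Summing the four groups then yields \eqref{ineq030}.

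I expect the main obstacle to lie in the time-differentiated elliptic term. Differentiating $(\widehat{a}^{ji}_{\ell}(\mathbf{0})-\widehat{a}^{ji}_{\ell}(\bfphi_{\ell}))\nabla\phi^i_{\ell}$ in $t$ produces, via the chain and product rules, summands containing the second derivatives $\partial^2\widehat{a}^{ji}_{\ell}$; when I difference two such summands, some of the resulting pieces force me to estimate $\partial^2\widehat{a}^{ji}_{\ell}(\bfphi_{\ell})-\partial^2\widehat{a}^{ji}_{\ell}(\widetilde{\bfphi}_{\ell})$, which requires one further mean value step and hence the boundedness of the \emph{third-order} derivatives of $\widehat{a}^{ji}_{\ell}$ (equivalently of $a^{ji}_{\ell}$), precisely as granted by {\rm (A2)} for $|\alpha|\le 3$. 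The delicate bookkeeping is to arrange all these product-rule pieces so that exactly one difference factor survives in each, preserving linearity in $\|\bfphi_{\ell}-\widetilde{\bfphi}_{\ell}\|_{\mathcal{X}_{\ell,T}}$; once this is secured, every individual bound reduces to one already established in the proof of Lemma~\ref{lem010}.
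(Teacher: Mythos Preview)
Your proposal is correct and follows essentially the same route as the paper's (sketched) proof: the paper also splits the difference along the four groups of \eqref{eq:split_estimate}, applies add-and-subtract to isolate a single factor $\bfphi_{\ell}-\widetilde{\bfphi}_{\ell}$ in each summand (their decomposition \eqref{eq451} is a minor variant of yours, with three rather than two pieces in the parabolic and elliptic groups), and then reuses the interpolation estimates from Lemma~\ref{lem010} together with {\rm (A1)--(A3)} to obtain $c(R)=c_1(R^2+R+1)$. Your explicit remark that the $\mathbf{W}^{1,2}_{\ell}$-trace at $t=0$ cancels, and your identification of the third-order derivative bound in {\rm (A2)} as the place where the chain rule on $\partial^2\widehat{a}^{ji}_{\ell}$ is needed, are details the paper's sketch leaves implicit.
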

\begin{proof}[Sketch of the proof] Similarly to Lemma~\ref{lem010}, the proof is
rather technical. Therefore, we only sketch the procedure and omit
the detailed derivations. First, we estimate
\begin{multline}\label{eq450}
\| \mathscr{K}(\bfphi_{\ell})-
\mathscr{K}(\widetilde{\bfphi}_{\ell})\|_{\mathcal{Y}_{\ell,T}}\leq\left\|
\left(  \widehat{\bfb}_{\ell}({\bf0}) -
\widehat{\bfb}_{\ell}(\bfphi_{\ell}) \right) \bfphi'_{\ell}(t)-
\left(  \widehat{\bfb}_{\ell}({\bf0}) -
\widehat{\bfb}_{\ell}(\widetilde{\bfphi}_{\ell}) \right)
\widetilde{\bfphi}'_{\ell}(t) \right\|_{\mathcal{Y}_{\ell,T}}
\\
+ \left\|\nabla\!\!\cdot\!\left[\left(
\widehat{a}^{ji}_{\ell}({\bf0}) -
\widehat{a}^{ji}_{\ell}(\bfphi_{\ell})\right)\!\nabla\phi^i_{\ell}
\right] - \nabla\!\!\cdot\!\left[\left(
\widehat{a}^{ji}_{\ell}({\bf0}) -
\widehat{a}^{ji}_{\ell}(\widetilde{\bfphi}_{\ell})\right)\!\nabla\widetilde{\phi}^i_{\ell}
\right] \right\|_{\mathcal{Y}_{\ell,T}}
\\
+ \left\|\nabla\cdot\left[ \widehat{a}^{ji}_{\ell}(\bfphi_{\ell})
\nabla \mu^i_{\ell} \right] - \nabla\cdot\left[
\widehat{a}^{ji}_{\ell}(\widetilde{\bfphi}_{\ell}) \nabla
\mu^i_{\ell} \right] \right\|_{\mathcal{Y}_{\ell,T}} + \left\|
\widehat{\bff}_{\ell}(\bfphi_{\ell})-\widehat{\bff}_{\ell}(\widetilde{\bfphi}_{\ell})
\right\|_{\mathcal{Y}_{\ell,T}}.
\end{multline}
The right hand side in \eqref{eq450} can be further estimated by
\begin{multline}\label{eq451}
\left\|\widehat{\bfb}_{\ell}({\bf0})\left( \bfphi'_{\ell}(t)
-\widetilde{\bfphi}'_{\ell}(t)
\right)\right\|_{\mathcal{Y}_{\ell,T}} +\left\|  \left(
\widehat{\bfb}_{\ell}(\bfphi_{\ell}) -
\widehat{\bfb}_{\ell}(\widetilde{\bfphi}_{\ell}) \right)
\bfphi'_{\ell}(t)\right\|_{\mathcal{Y}_{\ell,T}}
\\
+\left\| \widehat{\bfb}_{\ell}(\widetilde{\bfphi}_{\ell})\left(
\bfphi'_{\ell}(t) -\widetilde{\bfphi}'_{\ell}(t)
\right)\right\|_{\mathcal{Y}_{\ell,T}} + \left\|
\nabla\!\cdot\!\left[
\widehat{a}^{ji}_{\ell}({\bf0})\nabla\!\left(\phi^i_{\ell}-\widetilde{\phi}^i_{\ell}\right)
\right]\right\|_{\mathcal{Y}_{\ell,T}}
\\
\left\|\nabla\!\cdot\!\left[ \widehat{a}^{ji}_{\ell}({\bfphi})
\nabla\!\left({\phi}^i_{\ell}-\widetilde{\phi}^i_{\ell}\right)
\right]\right\|_{\mathcal{Y}_{\ell,T}} + \left\|
\nabla\!\cdot\!\left[\left( \widehat{a}^{ji}_{\ell}({\bfphi}) -
\widehat{a}^{ji}_{\ell}(\widetilde{\bfphi}_{\ell})\right)\!\nabla\widetilde{\phi}^i_{\ell}
\right]\right\|_{\mathcal{Y}_{\ell,T}}
\\
+
\left\|\nabla\!\cdot\!\left[\left(\widehat{a}^{ji}_{\ell}(\bfphi_{\ell})-
\widehat{a}^{ji}_{\ell}(\widetilde{\bfphi}_{\ell}) \right) \nabla
\mu^i_{\ell} \right]\right\|_{\mathcal{Y}_{\ell,T}} + \left\|
\widehat{\bff}_{\ell}(\bfphi_{\ell})-\widehat{\bff}_{\ell}
(\widetilde{\bfphi}_{\ell})
 \right\|_{\mathcal{Y}_{\ell,T}}.
\end{multline}
Estimating each term in \eqref{eq451} using the same arguments as in
the proof of Lemma \ref{lem010} and the assumptions (A1)--(A3), see
Section \ref{structure_conditions}, one obtains the inequality
\begin{equation}\label{est120}
\| \mathscr{K}(\bfphi_{\ell})-
\mathscr{K}(\widetilde{\bfphi}_{\ell})\|_{\mathcal{Y}_{\ell,T}} \leq
\underbrace{c_1\left(
R^2+R+1\right)}_{c(R)}C(T)\|\bfphi_{\ell}-\widetilde{\bfphi}_{\ell}
\|_{\mathcal{X}_{\ell,T}}
\end{equation}
for all $\bfphi_{\ell}, \widetilde{\bfphi}_{\ell} \in B_R(T)$. Now
\eqref{est120} yields \eqref{ineq030}.
\end{proof}

Using Definition \ref{operator_K} and Theorem \ref{thm_lin} we can
formulate the following
\begin{dfn}
Let $\mathscr{L}:\mathcal{X}_{\ell,T} \rightarrow
\mathcal{X}_{\ell,T}$ be an operator such that
$\mathscr{L}(\bfphi_{\ell})=\bfy_{\ell}$, if and only if
\begin{multline*}
\sum_{\ell=1}^{M} \int_{\Omega_{\ell}}
\widehat{b}^{\,ji}_{\ell}({\bf0})\frac{\partial y^i_{\ell}}{\partial
t} \, v^j \, {\rm d}\bfx + \sum_{\ell=1}^{M} \int_{\Omega_{\ell}}
\widehat{a}^{ji}_{\ell}({\bf0})\nabla y^i_{\ell}\cdot\nabla v^j \,
{\rm d}\bfx  + \sum_{\ell=1}^{M}
\int_{\partial\Omega_{\ell}\cap\Gamma} \alpha^j_{\ell}y^j_{\ell}\,
v^j \; {\rm d}S
\\
+\sum_{\ell=1}^{M} \int_{\partial\Omega_{\ell}\cap\Gamma}
\alpha^j_{\ell}(\mu^j_{\ell}-\sigma^j)\, v^j \; {\rm d}S =
\sum_{\ell=1}^{M}\int_{\Omega_{\ell}} \mathscr{K}(\bfphi_{\ell})
\cdot \bfv \; {\rm d}\bfx
\end{multline*}
holds for every $\bfv \in \mathbf{W}^{1,2}$ and almost every $t\in
(0,T)$ and $\bfy_{\ell}(\bfx,0)={\bf0} \; {\textmd{ in }}
\Omega_{\ell}$.
\end{dfn}
\begin{proof}[Proof of Theorem \ref{main_R_1}  (Main result).]
The proof of the main result is based on the Banach fixed point
theorem. Lemma \ref{lem010} and the estimate \eqref{eq11a} imply the
inequality
\begin{eqnarray}\label{est020}
\|\mathscr{L}(\bfphi_{\ell})\|_{\mathcal{X}_{\ell,T}}  &\leq& c_1 \,
\|\mathscr{K}(\bfphi_{\ell})\|_{\mathcal{Y}_{\ell,T}}+K_1 \nonumber \\
&\leq& c_2 C(T)\left(\|\bfphi_{\ell}\|^3_{\mathcal{X}_{\ell,T}} +
\|\bfphi_{\ell}\|^{2}_{\mathcal{X}_{\ell,T}}
+\|\bfphi_{\ell}\|_{\mathcal{X}_{\ell,T}}\right) + K_2
\end{eqnarray}
for all $\bfphi_{\ell}\in { B_R(T)}$, where $K_1$ and $K_2$ are
positive nondecreasing functions with respect to $T$ and independent
of $\bfphi_{\ell}$ and the constants $c_1,c_2$ are independent of
$\bfphi_{\ell}$ and $T$.  Further, linearity of
\eqref{eqlin01}--\eqref{eqlin05}, the estimate \eqref{eq11a} and
Lemma \ref{lem011} imply
\begin{multline}\label{est021}
\|\mathscr{L}(\bfphi_{\ell})-\mathscr{L}(\widetilde{\bfphi}_{\ell})
\|_{\mathcal{X}_{\ell,T}} \leq c_1\|\mathscr{K}(\bfphi_{\ell})-
\mathscr{K}(\widetilde{\bfphi}_{\ell})\|_{\mathcal{Y}_{\ell,T}}\\
 \leq  c(R) \, C(T)
 \|\bfphi_{\ell}-\widetilde{\bfphi}_{\ell}\|_{\mathcal{X}_{\ell,T}}
 \; \textmd{ for all } \bfphi_{\ell},\, \widetilde{\bfphi}_{\ell} \in B_R(T),
\end{multline}
where $c(R)$ is some nondecreasing function and  $C(T)\rightarrow
0_+$ for $T\rightarrow 0_+$. Now \eqref{est020} and \eqref{est021}
imply that for sufficiently small $T^*\in (0,T]$ there exists $R>0$
such that $\mathscr{L}:\mathcal{X}_{\ell,T^*} \rightarrow
\mathcal{X}_{\ell,T^*}$ maps $B_R(T^*)$ into itself and
$\mathscr{L}$ is a strict contraction in $B_R(T^*)$. Hence, using
the contraction mapping principle we have the existence of a fixed
point $\bfy_{\ell} \in B_R(T^*) \subset \mathcal{X}_{\ell,T^*}$,
such that $\mathscr{L}(\bfy_{\ell})=\bfy_{\ell}$. $\bfy_{\ell}$ is
uniquely determined in the ball $B_R(T^*)$. Set
$\bfu_{\ell}=\bfmu_{\ell}+\bfy_{\ell}$. By Remark \ref{rem_homo} the
function $\bfu_{\ell}$ is the strong solution of Problem ($P_0$) on
$(0,T^*)$.
\end{proof}

%--------------------------------------------------------------------------------

\section{Applications}\label{sec:applications}
%%%%%%%%%%%%%%%%%%%%%%
In this Section, we present examples of the coefficients of the
parabolic system~\eqref{eq01} related to models of transport in
porous media. Note that for brevity, we omit the subscript $\ell$
and the dependence of all variables on $\bfx$ and $t$ in what
follows.

All available engineering models of simultaneous heat and moisture
transfer possess a common structure, derived from two balance
equations of heat and mass~\cite{Hens:2007}:
\begin{eqnarray}\label{eq:heat_mass_structure}
\frac{\de H}{\de t} = - \nabla \cdot \bfj_Q + Q, && \frac{\de M}{\de
t} = - \nabla \cdot \bfj_m,
\end{eqnarray}
where $H$~(Jm$^{-3}$) is the specific enthalpy, $M$~(kgm$^{-3}$)
denotes the partial moisture density, $Q$~(Jm$^{-3}$s$^{-1}$) stands
for the intensity of internal heat sources and
$\bfj_Q$~(Jm$^{-2}$s$^{-1}$) and $\bfj_m$~(kgm$^{-2}$s$^{-1}$) are
the heat and moisture fluxes, respectively. This structure is also
reflected in the choice of the unknowns $\bfu$, which consist of the
temperature $u^1 = \theta$~($K$) and a quantity related to the
moisture content.

Individual models are then generated by the choice of the second
state variable $u^2$ and of the individual components in
system~\eqref{eq:heat_mass_structure}. In
Sections~\ref{sec:applications_kiessl}
and~\ref{sec:applications_kunzel}, following the expositions of
Dal\'{\i}k \emph{et al.}~\cite{DalikDanecekStastnik1996} and
\v{C}ern\'{y} and Rovnan\'{\i}kov\'{a}~\cite{CernyRovnanikova2002},
we briefly introduce two such representatives due to
Kiessl~\cite{Kiessl1983} and K\"{u}nzel~\cite{Kunzel:1995:SHM},
simplified by assuming that freezing of water in pores has a
negligible effect. An interested reader is referred
to~\cite{CernyRovnanikova2002,DalikDanecekStastnik1996,KuKi1997} for
additional discussion of the models and
to~\cite{CernyRovnanikova2002} for details on the terminology used
hereafter.

\subsection{The Kiessl model}\label{sec:applications_kiessl}
%%%%%%%%%%%%%%%%%%%%%%%%%%%%%

The enthalpy term in the Kiessl model is postulated in the form
\begin{equation}\label{eq:kiessl_enthalpy}
H = \rho_0 c_0 \theta + \rho_w c_w w \theta,
\end{equation}
where $\rho_0$~(kgm$^{-3}$) and $c_0$~(Jkg$^{-1}$K$^{-1}$) denote
the partial density and the specific heat capacity of the dry porous
matrix, $\rho_w$ and $c_w$ are analogous quantities for water and
$w$~(-) is the relative moisture content by volume. The heat flux
follows from the Fourier law for isotropic materials
\begin{equation}\label{eq:kiessl_heat_flux}
\bfj_Q = -\lambda( w, \theta ) \nabla \theta,
\end{equation}
with $\lambda$~(Jm$^{-1}$s$^{-1}$K$^{-1}$) being the state-dependent
coefficient of thermal conductivity.

The moisture balance is based on the moisture density provided by
\begin{equation}
M = \rho_w w + (e - w) \varphi \rho_{p,s}( \theta )
\end{equation}
where $e \geq w$~(-) denotes the porosity, $\varphi$ is the relative
humidity and $\rho_{p,s} \leq \rho_w$~(kgm$^{-3}$) is the
material-independent partial density of the saturated vapor phase,
given as a smooth increasing function of $\theta$. Assuming again
isotropy of the material, the corresponding moisture flux is
expressed in the form
\begin{equation}
\bfj_m = -\bigl( D_w( w, \theta ) \nabla w + D_\varphi( w, \theta )
\nabla \varphi + D_\theta( w, \theta  ) \nabla \theta \bigr),
\end{equation}
where $D_w$~(kgm$^{-1}$s$^{-1}$), $D_\varphi$~(kgm$^{-1}$s$^{-1}$)
and $D_\theta$~(kgm$^{-1}$K$^{-1}$s$^{-1}$) denote material-specific
diffusion coefficients, which need to be determined experimentally.
Finally, the internal heat sources
\begin{equation}
Q = L_v \bigl( D_w( w, \theta ) \nabla w + D_\varphi( w, \theta )
\nabla \varphi - \frac{\partial}{\partial t} \left[ (e - w) \varphi
\rho_{p,s}( \theta ) \right] \bigr)
\end{equation}
quantify the influence of phase changes in pores by means of the
latent heat of evaporation of water $L_v$~(Jkg$^{-1}$).

To close the model, Kiessl in~\cite{Kiessl1983} related the
auxiliary variables $w$ and $\varphi$ to a dimensionless moisture
potential $\Phi = u^2$ via monotone, material-dependent,
transformations
\begin{eqnarray}
w = f( \Phi ), && \varphi = g( \Phi ),
\end{eqnarray}
satisfying $f(0) = g(0) = 0$ and $\frac{\de g}{\de \Phi}(0)=1$. In
particular, $f$ denotes the sorption isotherm, whereas $g$ reflects
the pore size distribution. By employing these identifies, the
individual coefficients in~\eqref{eq01} receive the
form~(cf.~\cite{DalikDanecekStastnik1996})
\begin{subequations}
\begin{eqnarray}
B^1 & = & \rho_0 c_0 \theta + \rho_w c_w g( \Phi ) \theta + L_v (e -
f( \Phi )) g( \Phi ) \rho_{p,s}( \theta ),
\\
B^2 & = & \rho_w f( \Phi) + (e - f( \Phi )) g( \Phi ) \rho_{p,s}(
\theta ),
\\
a^{11} & = & \lambda( f( \Phi ), \theta ),
\\
a^{12} & = & L_v \left( D_w( f( \Phi ), \theta ) \frac{\de f( \Phi
)}{\de \Phi} + D_\varphi( f( \Phi ), \theta ) \frac{\de g( \Phi
)}{\de \Phi} \right),
\\
a^{21} & = & D_\theta( f( \Phi ), \theta ),
\\
a^{22} & = & D_w( f( \Phi ), \theta ) \frac{\de f( \Phi )}{\de \Phi}
+ D_\varphi( f( \Phi ), \theta ) \frac{\de g( \Phi )}{\de \Phi}.
\end{eqnarray}
\end{subequations}

\subsection{The K\"{u}nzel model}\label{sec:applications_kunzel}
%%%%%%%%%%%%%%%%%%%%%%%%%%%%%%%%%
In the K\"{u}nzel framework, the heat balance is described using
identical expressions for the enthalpy~\eqref{eq:kiessl_enthalpy}
and the heat flux~\eqref{eq:kiessl_heat_flux} as previously. In
addition, the moisture density is simplified into
\begin{equation}
M = \rho_w w
\end{equation}
and the moisture flux attains a form
\begin{equation}
\bfj_m = - \bigl( \hat{D}_\varphi (\varphi, \theta) \nabla \varphi +
\frac{\delta( \theta )}{\mu} \nabla \left( \varphi p_s ( \theta
)\right) \bigr),
\end{equation}
in which $\hat{D}_\varphi$~(kgm$^{-1}$s$^{-1}$) stands for the
liquid conduction coefficient,
$\delta$~(kgm$^{-1}$s$^{-1}$Pa$^{-1}$) is the vapor diffusion
coefficient in air, $\mu$~(-) is the vapor diffusion resistance
factor of a porous material and $p_s$~(Pa) is the vapor saturation
pressure. This yields the heat source term given by
\begin{equation}
Q = L_v \nabla \cdot \left( \frac{\delta( \theta )}{\mu} \nabla
\left( \varphi p_s ( \theta )\right) \right).
\end{equation}

The relative humidity is chosen as the second unknown, $u^2 =
\varphi$, and is used to express the associated volumetric moisture
content in the form
\begin{equation}
w = h ( \varphi ),
\end{equation}
where $h$ is a monotone moisture storage function with $h(0)=0$.
Altogether, the coefficients in system~\eqref{eq01} read as %%
\begin{subequations}
\begin{eqnarray}
B^1 & = & \rho_0 c_0 \theta + \rho_w c_w h( \varphi ) \theta,
\\
B^2 & = & \rho_w h( \varphi ),
\\
a^{11} & = & \lambda( h( \varphi ), \theta ) + L_v
\frac{\delta(\theta)}{\mu} \varphi \frac{\de p_s( \theta )}{\de
\theta},
\\
a^{12} & = & L_v \frac{\delta(\theta)}{\mu} p_s( \theta ),
\\
a^{21} & = & \frac{\delta( \theta )}{\mu} \varphi \frac{p_s( \theta
)}{\de \theta},
\\
a^{22} & = & D_\varphi( \varphi, \theta ) +
\frac{\delta(\theta)}{\mu} p_s( \theta ).
\end{eqnarray}
\end{subequations}

\subsection{Structure conditions (A1) and (A2)}
%%%%%%%%%%%%%%%%%%%%%%%%%%%%%%%%%%%%%%%%%%%%%%%%
\JZtext{The structure conditions (A1)--(A2) closely reflect the physical
constraints on the underlying transport models and experimental observations.
Concretely, the model parameters (such as e.g. $f(\Phi)$, $g(\Phi)$ and
$\rho_{p,s}(\theta)$ in the Kiessl model, or $h(\varphi)$ and $p_s(\theta)$
in the K\"{u}nzel model) are obtained by fitting smooth
functions to experimental data, determined for a limited range of state variables.  The required regularity and boundedness of
coefficients $\bfB$, $a^{jk}$ and positivity of $a^{jk}$ is therefore ensured.
The increasing character of $\bfB$ is consistent with the fact that both the
specific enthalpy $H$ and the moisture density $M$ increase with an increasing
temperature and the moisture-related variable, respectively. The ellipticity
condition~\eqref{gen_par2} is satisfied due to the fact that the Soret- and
Dufour-type fluxes, quantified by $a^{12}$ and $a^{21}$, are dominated by the
diagonal contributions $a^{11}$ and $a^{22}$, see
also~\cite{DalikDanecekVala2000}. Therefore, any physically correct form of
$a^{jk}$ must meet this condition. The validity of the
assumption~\eqref{gen_par1} then follows from the same physical reasoning.}

%--------------------------------------------------------------------------------

%--------------------------------------------------------------------------------

\appendix

%--------------------------------------------------------------------------------
\section{Proof of Lemma \ref{lm_sup_10}}\label{proof_1}
Discretize \eqref{eq_lin01} in time and replace $\bfu_{\ell}'(t_n)$
by the backward difference quotient
$\partial_t^{-h}(\bfw_{\ell})_n=[(\bfw_{\ell})_n-(\bfw_{\ell})_{n-1}]/h$,
where $h>0$ is a time step. Suppose $r=T/h$ is an integer. For
simplicity, let us write $\bfw_{\ell}=(\bfw_{\ell})_n$,
$\bff_{\ell}=(\bff_{\ell})_n$. We have to solve, successively for
$n=1,\cdots,r$, the steady problems
\begin{multline}\label{app_rothe_01}
\sum_{\ell=1}^{M} \int_{\Omega_{\ell}}
\beta^{ji}_{\ell}\partial_t^{-h}w^i_{\ell} \, v^j \, {\rm d}\bfx +
\sum_{\ell=1}^{M} \int_{\Omega_{\ell}}
  \kappa^{ji}_{\ell}\nabla w^i_{\ell} \cdot \nabla v^j  \,
{\rm d}\bfx   \\
+\sum_{\ell=1}^{M}\int_{\partial\Omega_{\ell}\cap\Gamma}
\nu^j_{\ell} \; w^j_{\ell}\,v^j \, {\rm d}S =
\sum_{\ell=1}^{M}\int_{\Omega_{\ell}} f^j_{\ell} \, v^j \, {\rm
d}\bfx
\end{multline}
which hold for every $\bfv\in \mathbf{W}^{1,2}$ and
$(\bfw_{\ell})_0={\bf0} \mbox{ in } \Omega_{\ell}$. Test
\eqref{app_rothe_01} by
$[v^1,v^2]=[\kappa^{21}_{\ell}\varphi^1,\kappa^{12}_{\ell}\varphi^2]$
and define the bilinear form $\mathfrak{A}(\bfw,\bfvarphi)$;
\begin{multline}
\mathfrak{A}(\bfw_{\ell},\bfvarphi) =\frac{1}{h}\sum_{\ell=1}^{M}
\int_{\Omega_{\ell}} \kappa^{pj}_{\ell}\beta^{ji}_{\ell}w^i_{\ell}
\, \varphi^j \, {\rm d}\bfx + \sum_{\ell=1}^{M} \int_{\Omega_{\ell}}
  \kappa^{pj}_{\ell}\kappa^{ji}_{\ell}\nabla w^i_{\ell}\cdot \nabla \varphi^j  \,
{\rm d}\bfx
\nonumber \\
 + \sum_{\ell=1}^{M}\int_{\partial\Omega_{\ell}\cap\Gamma}
\kappa^{pj}_{\ell}\nu^j_{\ell} \; w^j_{\ell}\, \varphi^j \, {\rm
d}S ,\quad p=1,2,\; p\neq j,
\end{multline}
for every $\bfvarphi\in \mathbf{W}^{1,2}$. Set
$\bfvarphi=\bfw_{\ell}$. Now, \eqref{lingen_par1},
\eqref{lingen_par2} and the Friedrichs inequality yield the
$\mathbf{W}^{1,2}$-ellipticity, i.e. there exists $c>0$ such that
\begin{equation}
c\|\bfw_{\ell}\|^2_{\mathbf{W}^{1,2}_{\ell}}\leq
|\mathfrak{A}(\bfw_{\ell},\bfw_{\ell})|
\end{equation}
for all $\bfw_{\ell}\in \mathbf{W}_{\ell}^{1,2}$. Using the
H\"{o}lder inequality and the standard trace theorem one obtains the
continuity of $\mathfrak{A}$, i.e. the inequality
\begin{equation}
|\mathfrak{A}(\bfw_{\ell},\bfz_{\ell})| \leq c
\|\bfw_{\ell}\|_{\mathbf{W}^{1,2}_{\ell}}\|\bfz_{\ell}\|_{\mathbf{W}^{1,2}_{\ell}}
\end{equation}
which holds for all $\bfw_{\ell}, \bfz_{\ell}\in
\mathbf{W}_{\ell}^{1,2}$ and for some positive constant $c$. The
linearity of $\mathfrak{A}:\mathbf{W}^{1,2}_{\ell}\rightarrow
(\mathbf{W}^{1,2}_{\ell})^*$ is obvious. Hence, for every
$\bff_{\ell}\in\mathbf{L}_{\ell}^{2}\subset(\mathbf{W}^{1,2}_{\ell})^*$
the Lax-Milgram theorem yields the existence of the weak solution
$\bfw_{\ell} \in \mathbf{W}_{\ell}^{1,2}$. To get higher regularity
results (with respect to time), define $\bfw_{\ell} \in
\mathbf{W}^{1,2}_{\ell}$ by \eqref{app_rothe_01} and test
\eqref{app_rothe_01} by
$[v^1,v^2]=[\kappa^{21}_{\ell}\partial_t^{-h}w^1_{\ell}
,\kappa^{12}_{\ell}\partial_t^{-h}w^2_{\ell} ]$ to get
\begin{multline}\label{rothe_05}
\sum_{\ell=1}^{M} \int_{\Omega_{\ell}}
\kappa^{pj}_{\ell}\beta^{ji}_{\ell}\partial_t^{-h}w^i_{\ell} \,
\partial_t^{-h}w^j_{\ell} \, {\rm d}\bfx
\\
+ \sum_{\ell=1}^{M} \int_{\Omega_{\ell}}
\kappa^{pj}_{\ell}\kappa^{ji}_{\ell}\nabla w^i_{\ell} \nabla
(\partial_t^{-h}w^j_{\ell})  \, {\rm d}\bfx
 + \sum_{\ell=1}^{M}\int_{\partial\Omega_{\ell}\cap\Gamma}
\kappa^{pj}_{\ell}\nu^j_{\ell} \;
w^j_{\ell}\,\partial_t^{-h}w^j_{\ell} \, {\rm d}S
\\
= \sum_{\ell=1}^{M}\int_{\Omega_{\ell}} \kappa^{pj}_{\ell}
f^j_{\ell} \,
\partial_t^{-h}w^j_{\ell} \, {\rm d}\bfx,\quad p=1,2,\; p\neq j.
\end{multline}
Denote by
\begin{multline}
\Phi_{\ell}\left[\nabla(\bfw_{\ell})_n\right]=
\kappa^{21}_{\ell}\kappa^{11}_{\ell}\frac{1}{2}\big|\nabla
(w^1_{\ell})_n\big|^2
+\kappa^{12}_{\ell}\kappa^{22}_{\ell}\frac{1}{2}\big|\nabla
(w^2_{\ell})_n\big|^2
\\
+\kappa^{12}_{\ell}\kappa^{21}_{\ell}\nabla
(w^1_{\ell})_n\cdot\nabla (w^2_{\ell})_n, \quad n=1,\dots,r.
\end{multline}
Now we can estimate
\begin{displaymath}
\Phi'_{\ell}[(\nabla\bfw_{\ell})_n]\cdot((\nabla\bfw_{\ell})_n-(\nabla\bfw_{\ell})_{n-1})\geq
\Phi_{\ell}[(\nabla\bfw_{\ell})_n]-\Phi_{\ell}[(\nabla\bfw_{\ell})_{n-1}]
\end{displaymath}
because $\Phi_{\ell}$ is convex. Thus, using Young's inequality, one
obtains
\begin{multline}\label{rothe_06}
\sum_{\ell=1}^{M} \int_{\Omega_{\ell}}
\kappa^{pj}_{\ell}\beta^{ji}_{\ell}\partial_t^{-h}w^i_{\ell} \,
\partial_t^{-h}w^j_{\ell} \, {\rm d}\bfx + \frac{1}{h} \sum_{\ell=1}^{M}
\int_{\Omega_{\ell}}
\Phi_{\ell}[(\nabla\bfw_{\ell})_n]-\Phi_{\ell}[(\nabla\bfw_{\ell})_{n-1}]
\, {\rm d}\bfx
\\
 + \frac{1}{h} \sum_{\ell=1}^{M}
 \int_{\partial\Omega_{\ell}\cap\Gamma}
\kappa^{pj}_{\ell}\nu^j_{\ell} \left( \frac{1}{2}
\big|(w^j_{\ell})_n\big|^2-\frac{1}{2}
\big|(w^j_{\ell})_{n-1}\big|^2\right)\,{\rm d}S
\\
\leq  C(\epsilon) \sum_{\ell=1}^{M}
\|\bff_{\ell}\|^2_{\mathbf{L}^2_{\ell}} + \epsilon \, c_2
\sum_{\ell=1}^{M} \int_{\Omega_{\ell}}
\|\partial_t^{-h}\bfw_{\ell}\|^2_{\mathbf{L}^2_{\ell}} \, {\rm
d}\bfx,\quad p=1,2,\; p\neq j,
\end{multline}
with some arbitrarily small constant $\epsilon$. Note that
\eqref{lingen_par1} yields the estimate of the parabolic term
\begin{displaymath}
c \, \sum_{\ell=1}^{M} \int_{\Omega_{\ell}}
\|\partial_t^{-h}\bfw_{\ell}\|^2_{\mathbf{L}^2_{\ell}} \, {\rm
d}\bfx \leq \sum_{\ell=1}^{M} \int_{\Omega_{\ell}}
\kappa^{pj}_{\ell}\beta^{ji}_{\ell}\partial_t^{-h}w^i_{\ell} \,
\partial_t^{-h}w^j_{\ell} \, {\rm d}\bfx,\quad p=1,2,\; p\neq j,
\end{displaymath}
where $c$ depends on $\beta^{ji}_{\ell}$ and $\kappa^{ji}_{\ell}$.
Based on the estimate \eqref{rothe_06}, the same way as in
\cite[Proof of Theorem 8.16]{Roubicek2005} we can prove the
existence of the solution $\bfu_{\ell}\in
L^{\infty}(0,T;\;\mathbf{W}^{1,2}_{\ell})\hookrightarrow
L^{\infty}(0,T;\;\mathbf{L}^{2}_{\ell})$ with $\bfu'_{\ell}(t)\in
L^2(0,T;\;\mathbf{L}^{2}_{\ell})$ and the estimate
\begin{equation}\label{estimate_a0}
\|\bfu'_{\ell}(t)\|_{L^{2}(0,T;\mathbf{L}^2_{\ell})}
+\|\bfu_{\ell}\|_{L^{\infty}(0,T;\mathbf{L}^2_{\ell})} \leq c \|
\bff_{\ell}\|_{L^{2}(0,T;\mathbf{L}^2_{\ell})}.
\end{equation}
Now we can proceed as in \cite{HungAnh2008}. Rewrite the system
\eqref{eqlin01}--\eqref{eqlin05} in the form
\begin{equation}\label{linear_transmission_strong_2}
\left\{
\begin{array}{rclll}
-\nabla \cdot \left( \kappa^{ji}_{\ell}\nabla u^i_{\ell} \right) &=&
F^j_{\ell} := f^j_{\ell} - \beta^{ji}_{\ell}\frac{\partial
u^i_{\ell}}{\partial t}&  {\rm in }
& Q_{\ell T},\\
\kappa^{ji}_{\ell}\frac{\partial u^i_{\ell}}{\partial
\bfn_{\ell}}+\nu^j_{\ell} u^j_{\ell}&=&0 & {\rm on}
&S_{\ell T},\\
u^j_{\ell} &=& u^j_{m} & {\rm on } & \Gamma_{m\ell}\times(0,T),\\
\kappa^{ji}_{\ell}\frac{\partial u^i_{\ell}}{\partial
\bfn_{\ell}}+\kappa^{ji}_{m}\frac{\partial u^i_{m}}{\partial
\bfn_{m}}&=&0 & {\rm on} & \Gamma_{m\ell}\times(0,T).
\end{array}\right.
\end{equation}
Since $\bfu'(t)_{\ell}\in L^2(0,T;\mathbf{L}^{2}_{\ell})$ we have
$F^j_{\ell}(\bfx,t)\in \mathbf{L}^{2}_{\ell}$ for a.e. $t\in(0,T)$.
According to results for stationary transmission problem (see
\ref{regularity_stationary}, Corollary \ref{corollary_appendix}) we
have $\bfu_{\ell}(t)\in \mathbf{W}^{2,2}_{\ell}$ and the estimate
\begin{equation}\label{estimate_a1}
\|\bfu_{\ell}(t)\|_{\mathbf{W}^{2,2}_{\ell}} \leq c
\|\bff_{\ell}(t)\|_{\mathbf{L}_{\ell}^2}
\end{equation}
holds for almost every $t\in(0,T)$, where the constant $c$ is
independent of $t$. Raising \eqref{estimate_a1} and integrating both
sides in the inequality \eqref{estimate_a1} with respect to time, we
get $\bfu_{\ell}\in L^2(0,T;\mathbf{W}^{2,2}_{\ell})$ and taking
into account the estimate \eqref{estimate_a0} we arrive at
\begin{equation}
\|\bfu'_{\ell}(t)\|_{L^{2}(0,T;\mathbf{L}^2_{\ell})}
+\|\bfu_{\ell}\|_{L^{2}(0,T;\mathbf{W}^{2,2}_{\ell})}
+\|\bfu_{\ell}\|_{L^{\infty}(0,T;\mathbf{L}^2_{\ell})} \leq c
\|\bff_{\ell}\|_{L^{2}(0,T;\mathbf{L}^2_{\ell})}.
\end{equation}
Now taking the derivative of \eqref{eqlin01}--\eqref{eqlin05} with
respect to time, considering $\bff_{\ell}\in \mathcal{Y}_{\ell,T}$
(including the compatibility condition on $\bff_{\ell}(\bfx,0))$, we
conclude $\bfu''_{\ell}(t)\in L^{2}(0,T;\mathbf{L}^2_{\ell})$,
$\bfu'_{\ell}(t)\in
L^{2}(0,T;\mathbf{W}^{2,2}_{\ell})\cap
L^{\infty}(0,T;\mathbf{L}^2_{\ell})$
and the estimate \eqref{eq11b} follows. The linearity of Problem
$(P_f)$ and the estimate \eqref{eq11b} yield the uniqueness.

%--------------------------------------------------------------------------------

\section{Proof of Lemma \ref{lm_sup_11}}\label{proof_2}
Let $\bfg_{\ell}\in
{W^{2,2}(0,T;(\mathbf{W}^{1/2,2}_{\partial\Omega_{\ell}\cap\Gamma})^*)}$.
We are looking for the solution of the problem defined via
\begin{multline}\label{eq_app_b}
\sum_{\ell=1}^{M} \int_{\Omega_{\ell}}
\beta^{ji}_{\ell}\frac{\partial \xi^i_{\ell}}{\partial t} \, v^j \,
{\rm d}\bfx + \sum_{\ell=1}^{M} \int_{\Omega_{\ell}}
  \kappa^{ji}_{\ell}\nabla \xi^i_{\ell} \cdot \nabla v^j  \,
{\rm d}\bfx  \\
 + \sum_{\ell=1}^{M}\int_{\partial\Omega_{\ell}\cap\Gamma} \nu^j_{\ell}
\; \xi^j_{\ell}\,v^j \, {\rm d}S = \sum_{\ell=1}^{M}\langle
\bfG''_{\ell}(t) ; \bfv
\rangle_{(\mathbf{W}^{1,2}_{\ell})^*,\mathbf{W}^{1,2}_{\ell}}
\end{multline}
to be satisfied for every $\bfv\in\mathbf{W}^{1,2}$, almost every
$t\in (0,T)$ and $\bfxi_{\ell}(\bfx,0)={\bf0}$ in $\Omega_{\ell}$.
The duality of $\langle \bfG_{\ell} ; \bfv
\rangle_{(\mathbf{W}^{1,2}_{\ell})^*,\mathbf{W}^{1,2}_{\ell}}$
corresponds to
\begin{equation}
\langle \bfG_{\ell} ; \bfv
\rangle_{(\mathbf{W}^{1,2}_{\ell})^*,\mathbf{W}^{1,2}_{\ell}} =
\sum_{\ell=1}^{M} \int_{\partial\Omega_{\ell}\cap\Gamma}
g^j_{\ell}\, v^j \; {\rm d}S .
\end{equation}
Approximate \eqref{eq_app_b} in time by discretization and replace
$\bfxi_{\ell}'(t_n)$ by the backward difference quotient
$\partial_t^{-h}(\bfw_{\ell})_n=[(\bfw_{\ell})_n-(\bfw_{\ell})_{n-1}]/h$,
where $h>0$ is a time step. Suppose $r=T/h$ is an integer. Let us
write $\bfw_{\ell}=(\bfw_{\ell})_n$ and test \eqref{eq_app_b} by
$[v^1,v^2]=[\kappa^{21}_{\ell}\varphi^1,\kappa^{12}_{\ell}\varphi^2]$.
We have to solve, successively for $n=1,\cdots,r$, the steady
problems
\begin{multline}\label{app_rothe_b1}
\sum_{\ell=1}^{M} \int_{\Omega_{\ell}}\kappa^{pj}_{\ell}
\beta^{ji}_{\ell}\partial_t^{-h}w^i_{\ell} \, \varphi^j \, {\rm
d}\bfx \\
+ \sum_{\ell=1}^{M}  \int_{\Omega_{\ell}}
 \kappa^{pj}_{\ell} \kappa^{ji}_{\ell}\nabla w^i_{\ell} \nabla \varphi^j  \,
{\rm d}\bfx + \sum_{\ell=1}^{M}
\int_{\partial\Omega_{\ell}\cap\Gamma}
\kappa^{pj}_{\ell}\nu^j_{\ell} \; w^j_{\ell}\,\varphi^j \, {\rm
d}S
\\
=\sum_{\ell=1}^{M} \kappa^{pj}_{\ell}\langle (G''(t_n))^j_{\ell} ;
\varphi^j \rangle_{({W}^{1,2}_{\ell})^*,{W}^{1,2}_{\ell}},\quad
p=1,2,\; p\neq j,
\end{multline}
to hold for every $\bfvarphi\in \mathbf{W}^{1,2}$ and
$(\bfw_{\ell})_0={\bf0}$. Test \eqref{app_rothe_b1} by
$\bfvarphi=(\bfw_{\ell})_n$ to get the estimate
\begin{multline}\label{app_rothe_b1b}
c_1\sum_{\ell=1}^{M}
\left(\frac{1}{2}\|(\bfw_{\ell})_n\|_{\mathbf{L}^2}
-\frac{1}{2}\|(\bfw_{\ell})_0\|_{\mathbf{L}^2}\right)
\\
+ c_2 h \left( \sum_{\ell=1}^{M}\sum_{m=1}^{n}\left(
\int_{\Omega_{\ell}} |\nabla (\bfw_{\ell})_m|^2 {\rm d}\bfx
+\int_{\partial\Omega_{\ell}\cap\Gamma} | (\bfw_{\ell})_m|^2 {\rm
d}S \right) \right)\\
\leq  \sum_{\ell=1}^{M}\sum_{m=1}^{n} \kappa^{pj}_{\ell}\langle
(G''(t_n))^j_{\ell} ; (w^j_{\ell})_n
\rangle_{({W}^{1,2}_{\ell})^*,{W}^{1,2}_{\ell}},\quad p=1,2,\; p\neq j.
\end{multline}
Now we can proceed as in \cite[Proof of Lemma 8.6, Proof of Theorem
8.9]{Roubicek2005} to prove the existence of the weak solution
$\bfxi_{\ell}\in L^2(0,T;\mathbf{W}^{1,2}_{\ell})$ (as the limit of
Rothe sequences) with the estimate
\begin{equation}\label{estimate_b1}
\|\bfxi_{\ell}\|_{L^2(0,T;\mathbf{W}^{1,2}_{\ell})} \leq c
\|\bfG''_{\ell}(t)\|_{L^2(0,T;(\mathbf{W}^{1,2}_{\ell})^*)}.
\end{equation}
Let us note that $\bfxi_{\ell}$ stands for $\bfu''_{\ell}(t)$. Hence
$\bfu''_{\ell}(t)\in L^2(0,T;\mathbf{W}^{1,2}_{\ell})\hookrightarrow
L^2(0,T;\mathbf{L}^{2}_{\ell})$. Further, let
$\bfg_{\ell}\in{W^{1,2}(0,T;\mathbf{W}^{1/2,2}_{\partial\Omega_{\ell}\cap\Gamma})}$.
The standard theory for parabolic problems yields
$\bfu'_{\ell}(t)\in L^{\infty}(0,T;\mathbf{L}^{2}_{\ell})$. Using
the same procedure as in \ref{proof_1} and according to results for
stationary transmission problem (see \ref{regularity_stationary},
Corollary \ref{corollary_appendix}) we conclude $\bfu'_{\ell}(t) \in
L^2(0,T;\mathbf{W}^{2,2}_{\ell}) \cap
L^{\infty}(0,T;\mathbf{L}^{2}_{\ell}) $ and (combining with
\eqref{estimate_b1})
\begin{multline}
\|{ \bfu''_{\ell}(t)}\|_{L^{2}(0,T;\mathbf{L}^2_{\ell})}
+\|{ \bfu'_{\ell}(t)}\|_{L^{2}(0,T;\mathbf{W}^{2,2}_{\ell})}
+\|{ \bfu'_{\ell}(t)}\|_{L^{\infty}(0,T;\mathbf{L}^2_{\ell})}
\\
\leq c  \left( \|
\bfg_{\ell}\|_{W^{2,2}(0,T;(\mathbf{W}^{1/2,2}_{\partial\Omega_{\ell}\cap\Gamma})^*)}
+ \|
\bfg_{\ell}\|_{W^{1,2}(0,T;\mathbf{W}^{1/2,2}_{\partial\Omega_{\ell}\cap\Gamma})}
\right).
\end{multline}
The linearity of Problem $(P_g)$ and the estimate
\eqref{estimate_b1} yield the uniqueness.

%--------------------------------------------------------------------------------

\section{Transmission problem for elliptic systems in a multi-layer
structure}\label{regularity_stationary} The boundary transmission
problem for the elliptic system for the subdomain $\Omega_{\ell}$ is
formulated, in the expanded form, as
\begin{equation}\label{transm_elliptic_system}
\left\{
\begin{array}{rclll}
-\nabla\cdot\left(\varepsilon^{11}_{\ell}(\bfx)\nabla
u^1_{\ell}\right)
-\nabla\cdot\left(\varepsilon^{12}_{\ell}(\bfx)\nabla
u^2_{\ell}\right)&=& f^1_{\ell} &  {\rm in } & \Omega_{\ell},
\\
-\nabla\cdot\left(\varepsilon^{21}_{\ell}(\bfx)\nabla
u^2_{\ell}\right)
-\nabla\cdot\left(\varepsilon^{22}_{\ell}(\bfx)\nabla
u^2_{\ell}\right)&=& f^2_{\ell} &  {\rm in } & \Omega_{\ell},
\\
\varepsilon^{11}_{\ell}(\bfx)\frac{\partial u^1_{\ell}}{\partial
\bfn_{\ell}} + \varepsilon^{12}_{\ell}(\bfx)\frac{\partial
u^2_{\ell}}{\partial \bfn_{\ell}} +\alpha^{1}_{\ell}u^1_{\ell}
&=&g^1_{\ell} & {\rm on} &
\partial\Omega_{\ell}\cap\Gamma,
\\
\varepsilon^{21}_{\ell}(\bfx)\frac{\partial u^1_{\ell}}{\partial
\bfn_{\ell}} +\varepsilon^{22}_{\ell}(\bfx)\frac{\partial
u^2_{\ell}}{\partial \bfn_{\ell}} +\alpha^{2}_{\ell}u^2_{\ell}
&=&g^2_{\ell} & {\rm on} &
\partial\Omega_{\ell}\cap\Gamma,
\\
u^1_{\ell} &=& u^1_{m} & {\rm on } & \Gamma_{m\ell},
\\
u^2_{\ell} &=& u^2_{m} & {\rm on } & \Gamma_{m\ell},
\\
\varepsilon^{11}_{\ell}(\bfx)\frac{\partial u^1_{\ell}}{\partial
\bfn_{\ell}} + \varepsilon^{12}_{\ell}(\bfx)\frac{\partial
u^2_{\ell}}{\partial \bfn_{\ell}} +
\varepsilon^{11}_{m}(\bfx)\frac{\partial u^1_{m}}{\partial \bfn_{m}}
+ \varepsilon^{12}_{m}(\bfx)\frac{\partial u^2_{m}}{\partial
\bfn_{m}}&=&0 & {\rm on} & \Gamma_{m\ell},
\\
\varepsilon^{21}_{\ell}(\bfx)\frac{\partial u^2_{\ell}}{\partial
\bfn_{\ell}} + \varepsilon^{22}_{\ell}(\bfx)\frac{\partial
u^2_{\ell}}{\partial \bfn_{\ell}} +
\varepsilon^{21}_{m}(\bfx)\frac{\partial u^1_{m}}{\partial \bfn_{m}}
+ \varepsilon^{22}_{m}(\bfx)\frac{\partial u^2_{m}}{\partial
\bfn_{m}}&=&0 & {\rm on} & \Gamma_{m\ell}.
\end{array}\right.
\end{equation}
Here we assume that the problem \eqref{transm_elliptic_system} is
elliptic and has a unique weak solution $\bfu_{\ell} \in
\mathbf{W}^{1,2}_{\ell}$  for $\bff_{\ell} \in \mathbf{L}_{\ell}^2$
and  $\bfg_{\ell} \in \mathbf{W}_{\ell,\Gamma}^{1/2,2}$. Further we
consider that $\varepsilon^{ji}_{\ell}(\bfx)$ are positive Lipschitz
continuous functions and $\alpha^{j}_{\ell}$ are prescribed
constants.

Elliptic boundary value problems in cornered plane domains are
extensively investigated in the literature, see e.g.
\cite{Dauge1988,Kon1967,KozMazRoss1997,KozMazRoss2001,KufSan1987}.
The behavior of local solutions of general linear and semilinear
transmission problems is studied in
\cite{NiSa1994a,NiSa1994b,Sandig2000}. We adapt the general
framework stated in the literature to calculate the regularity of
the plane transmission problem for the elliptic system of equations
\eqref{transm_elliptic_system}.

It is known (cf. \cite{KufSan1987,Sandig2000}) that, in general, the
boundary singularities may occur near corner points at the boundary
$\partial \Omega$, the points at the boundary where the boundary
conditions change their type, the crossing points of interfaces,
corner points of inclusions or points, where the interfaces
$\Gamma_{m\ell}$ intersect the exterior boundary of the domain
$\Omega$. Taking into account the assumptions on admissible domains
introduced in Subsection \ref{Admissible domains}, only the points
where the interfaces $\Gamma_{m\ell}$ intersect the exterior
boundary are of importance in our analysis of vertex singularities.
Hence, let $\mathcal{M}$ be the set of all boundary points
$A\in\Gamma\cap\Gamma_{m\ell}$, $m$, $\ell=1,\dots,M$. As well
known, the local regularity is valid outside an arbitrarily small
neighborhood of the points $A\in\mathcal{M}$. Hence it suffices to
prove the regularity for the solution $\bfu_{\ell}$ with small
supports. For solutions with arbitrary support the assertion then
can be easily proved by means of a partition of unity on $\Omega$.
Let $A$ be an arbitrary point from the set $\mathcal{M}$ and let the
support of $\bfu_{\ell}$ be contained in a sufficiently small
neighborhood $\mathcal{{U}}(A)$ of the point $A$. Let $D$ be a
diffeomorphic mapping $\Omega \cap \mathcal{{U}}(A)$ onto
$\mathcal{K}_A\cap C_A$, where $\mathcal{K}_A$ is an angle with
vertex at the origin (shifted into $A$) and $C_A$ is a unit circle
centered at the origin. Considering a zero-extension outside $C_A$,
a new boundary value problem with ``frozen coefficients''
$\varepsilon^{ji}_{\ell}({\bf0})$ is defined in an infinite angle
$\mathcal{K}_A=\mathcal{K}_{\ell}\cup\mathcal{K}_{\ell+1}$ and the
problem of regularity of the solution coincides with the original
problem near the corner point $A$. One applies certain regularity
theorem for the boundary value problem in an infinite angle
$\mathcal{K}_A$. Hence, following \cite[Chapter 3 and 4]{NiSa1994a},
we localize the boundary value problem
\eqref{transm_elliptic_system} (see Fig. \ref{localization}), i.e.
identify the origin ${\bf0}$ of coordinates with the point $A$,
``freeze'' the coefficients $\varepsilon^{ji}_{\ell}({\bf0})$ and
multiply the weak solution by a cut-off function $\eta(|\bfx|) \in
{C}^{\infty}(\mathbb{R}^2)$, $0 \leq \eta(|\bfx|)$, given by
\begin{displaymath}
\eta(|\bfx|) =  \quad  \left\{
\begin{array}{ccl}
1 & {\rm for} &  |\bfx| < \epsilon / 2 ,
\\
0 & {\rm for} &  |\bfx| > \epsilon.
\end{array} \right.
\end{displaymath}
\begin{figure}[h]
\centering
\includegraphics[width=7.0cm]{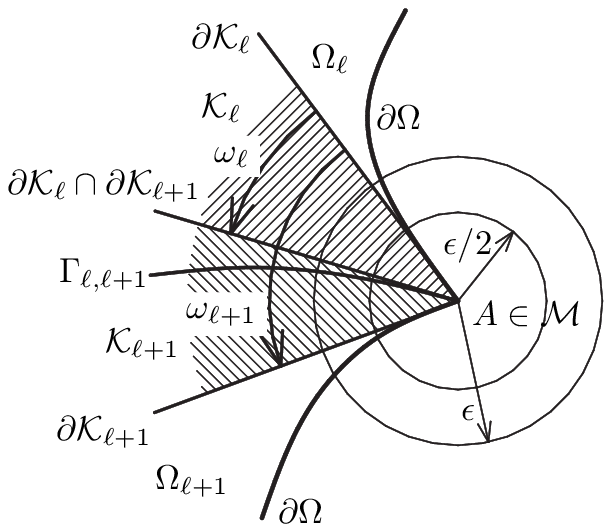}
\caption{Localization principle.}\label{localization}
\end{figure}
Denote $\bfw_{m} = \eta\bfu_{m}$, $m=\ell,\ell+1$, and consider the
following problem with frozen coefficients (now written in compact
form using the Einstein summation convention for indices $i$ and $j$
running from 1 to 2)
\begin{equation}\label{transmission_problem_angle}
\left\{
\begin{array}{rclll}
- \varepsilon^{ji}_{m}({\bf0})\Delta w^i_{m}  &=& F^j_{m} & {\rm in
} & \mathcal{K}_{m},
\\
\varepsilon^{ji}_{m}({\bf0})\frac{\partial w^i_{m}}{\partial
\bfn_{m}}&=&G^j_{m} & {\rm on} &
\partial\mathcal{K}_{m}\cap\Gamma,
\\
w^j_{\ell} &=& w^j_{\ell+1} & {\rm on } & \partial\mathcal{K}_{\ell}
\cap\partial\mathcal{K}_{\ell+1},
\\
\varepsilon^{ji}_{\ell}({\bf0})\frac{\partial w^i_{\ell}}{\partial
\bfn_{\ell}}+\varepsilon^{ji}_{\ell+1}({\bf0})\frac{\partial
w^i_{\ell+1}}{\partial \bfn_{\ell+1}}&=&0 & {\rm on} &
\partial\mathcal{K}_{\ell} \cap\partial\mathcal{K}_{\ell+1},
\end{array}\right.
\end{equation}
where
\begin{eqnarray}\label{cut_right_hand}
F^{j}_{m}&=&-u^j_{m}\Delta
\eta-2\varepsilon^{ji}_{m}({\bf0})\frac{\partial u^i_{m}}{\partial
x_k}\frac{\partial \eta}{\partial x_k}+f^j_{m}\eta,    \nonumber
\\
G^{j}_{m} &=& g^j_{m}\eta-\alpha^j_{m}u^j_{m}\eta . \nonumber
\end{eqnarray}
The regularity of $\bfu_{m}$ in a neighborhood of the point $A$ is
determined by the smoothness of $\bfw_{m}$ near $A$.
Using polar coordinates $(r,\omega)$ in
\eqref{transmission_problem_angle} we arrive at the system
\begin{equation}\label{transmission_problem_polar}
\left\{
\begin{array}{rcllll}
- \varepsilon^{ji}_{m}({\bf0})\left( \frac{\partial ^2
\overline{w}^i_{m}}{\partial r^2} + \frac{1}{r} \frac{\partial
\overline{w}^i_{m}}{\partial r} + \frac {1}{r^2} \frac {\partial ^2
\overline{w}^i_{m}}{\partial \omega ^2} \right) &=&
\overline{F}^j_{m} \; {\rm in } \; \overline{S}_{m}, \;
m=\ell,\ell+1,% &&&
\\
 \varepsilon^{ji}_{\ell}({\bf0})\frac{\partial
 \overline{w}^i_{\ell}}{\partial \omega}(r,0)&=&
\overline{G}^j_{\ell}(r,0), &&&
\\
 \varepsilon^{ji}_{\ell+1}({\bf0})\frac{\partial
 \overline{w}^i_{\ell+1}}{\partial
\omega}(r,\omega_{\ell+1})&=&
\overline{G}^j_{\ell+1}(r,\omega_{\ell+1}), &&&
\\
\overline{w}^j_{\ell}(r,\omega_{\ell})  &=&
\overline{w}^j_{\ell+1}(r,\omega_{\ell}), &&
\\
 \varepsilon^{ji}_{\ell}({\bf0})\frac{\partial \overline{w}^i_{\ell}}{\partial
\omega}(r,\omega_{\ell}) &=&
\varepsilon^{ji}_{\ell+1}({\bf0})\frac{\partial
\overline{w}^i_{\ell+1}}{\partial \omega}(r,\omega_{\ell}), &&
\end{array}\right.
\end{equation}
where $\overline{S}_{\ell}$ and $\overline{S}_{\ell+1}$,
respectively, is an infinite half-strip
\begin{eqnarray*}
\overline{S}_{\ell}&=&\left\{ (r,\omega): r \in \mathbb{R}_+, \,
0<\omega<\omega_{\ell} \right\},
\\
\overline{S}_{\ell+1}&=&\left\{ (r,\omega): r \in \mathbb{R}_+, \,
\omega_{\ell}<\omega<\omega_{\ell+1} \right\},
\end{eqnarray*}
respectively, and $\overline{\bfw}_{m}(r,\omega)=\bfw_{m}(x_1,x_2)$,
$\overline{\bfF}_{m}(r,\omega)=\bfF_{m}(x_1,x_2)$ and
$\overline{\bfG}_{m}(r,\omega)=\bfG_{m}(x_1,x_2)$, $m=\ell,\ell+1$.
Substituting $r=e^\xi$, we get the system of equations
\begin{equation}\label{transmission_problem_exp}
\left\{
\begin{array}{rcllll}
- \varepsilon^{ji}_{m}({\bf0})\left( \frac{\partial ^2
\widetilde{w}^i_{m}}{\partial \xi^2} + \frac {\partial ^2
\widetilde{w}^i_{m}}{\partial \omega ^2} \right) &=&
\widetilde{F}^j_{m} \; {\rm in }\; \widetilde{S}_{m}, \;
m=\ell,\ell+1, & &&
\\
\varepsilon^{ji}_{\ell}({\bf0})\frac{\partial
\widetilde{w}^i_{\ell}}{\partial \omega}(\xi,0)&=&
\widetilde{G}^j_{\ell}(\xi,0), &  &  &
\\
\varepsilon^{ji}_{\ell+1}({\bf0})\frac{\partial
\widetilde{w}^i_{\ell+1}}{\partial \omega}(\xi,\omega_{\ell+1})&=&
\widetilde{G}^j_{\ell+1}(\xi,\omega_{\ell+1}), &  & &
\\
\widetilde{w}^j_{\ell}(\xi,\omega_{\ell})  &=&
\widetilde{w}^j_{\ell+1}(\xi,\omega_{\ell}),  &    &
\\
\varepsilon^{ji}_{\ell}({\bf0})\frac{\partial
\widetilde{w}^i_{\ell}}{\partial
\omega}(\xi,\omega_{\ell})&=&\varepsilon^{ji}_{\ell+1}({\bf0})\frac{\partial
\widetilde{w}^i_{\ell+1}}{\partial \omega}(\xi,\omega_{\ell}),&&
\end{array}\right.
\end{equation}
where $\widetilde{S}_{\ell}$ and $\widetilde{S}_{\ell+1}$,
respectively, denotes an infinite strip
\begin{eqnarray*}
\widetilde{S}_{\ell}&=&\left\{ (\xi,\omega): \xi \in \mathbb{R}, \,
0<\omega<\omega_{\ell} \right\},\\
\widetilde{S}_{\ell+1}&=&\left\{ (\xi,\omega): \xi \in \mathbb{R},
\, \omega_{\ell}<\omega<\omega_{\ell+1} \right\},
\end{eqnarray*}
respectively, and $\widetilde{\bfw}(\xi,\omega)=\bfw(x_1,x_2)$,
$\widetilde{\bfF}_m(\xi,\omega)e^{-2\xi}=\overline{\bfF}_{m}(r,\omega)$,
$\widetilde{\bfG}_m(\xi,\omega)e^{-\xi}=\overline{\bfG}_m(r,\omega)$.
\begin{figure}[h]
\centering
\includegraphics[width=5.3cm]{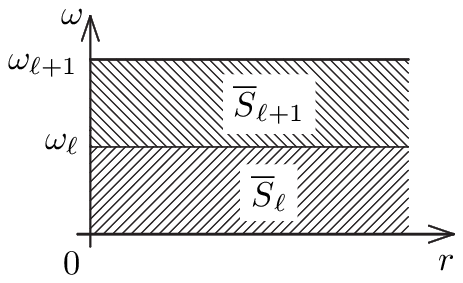}
%\caption{The admissible domain $\Omega$.}\label{composite}
\qquad \includegraphics[width=6.1cm]{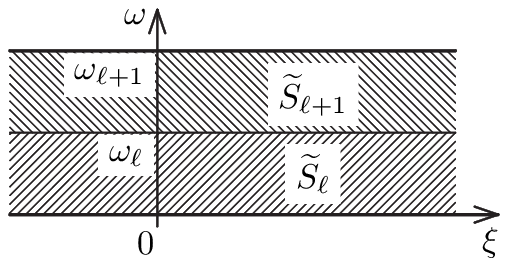} \caption{The
infinite half--strip
$\overline{S}=\overline{S}_{\ell}\cup\overline{S}_{\ell+1}$ and the
infinite strip
$\widetilde{S}=\widetilde{S}_{\ell}\cup\widetilde{S}_{\ell+1}$.}
\end{figure}
We apply the complex Fourier transform $\mathcal{F}_{\xi\rightarrow
\lambda}$ with respect to real variable $\xi \in \mathbb{R}$,
\begin{displaymath}
[\mathcal{F}_{\xi\rightarrow \lambda}\phi(\xi)](\lambda) =
\widehat{\phi}(\lambda)=\frac{1}{\sqrt{2\pi}}
\int\limits_{-\infty}^{\infty} \phi(\xi)e^{-\mathrm{i} \lambda \xi}
\;{\rm d}\xi, \quad \lambda \in \mathbb{C},
\end{displaymath}
to transform \eqref{transmission_problem_exp} to the one-dimensional
problem
\begin{equation}\label{transmission_problem_fourier}
\left\{
\begin{array}{rclll}
- \varepsilon^{ji}_{\ell}({\bf0})\left(
(\mathrm{i}\lambda)^2\widehat{w}^i_{\ell} + \frac {\partial ^2
\widehat{w}^i_{\ell}}{\partial \omega ^2} \right) &=&
\widehat{F}^j_{\ell} \textmd{ for } \omega\in(0,\omega_{\ell}),
\\
- \varepsilon^{ji}_{\ell+1}({\bf0})\left(
(\mathrm{i}\lambda)^2\widehat{w}^i_{\ell+1} + \frac {\partial ^2
\widehat{w}^i_{\ell+1}}{\partial \omega ^2} \right) &=&
\widehat{F}^j_{\ell+1} \textmd{ for }
\omega\in(\omega_{\ell},\omega_{\ell+1}),
\\
\varepsilon^{ji}_{\ell}({\bf0})\frac{\partial
\widehat{w}^i_{\ell}}{\partial \omega}(\lambda,0)&=&
\widehat{G}^j_{\ell}(\lambda,0),
\\
\varepsilon^{ji}_{\ell+1}({\bf0})\frac{\partial
\widehat{w}^i_{\ell+1}}{\partial \omega}(\lambda,\omega_{\ell+1})&=&
\widehat{G}^j_{\ell+1}(\lambda,\omega_{\ell+1}),
\\
\widehat{w}^j_{\ell}(\lambda,\omega_{\ell})  &=&
\widehat{w}^j_{\ell+1} (\lambda,\omega_{\ell}),
\\
\varepsilon^{ji}_{\ell}({\bf0})\frac{\partial
\widehat{w}^i_{\ell}}{\partial \omega}(\lambda,\omega_{\ell}) &=&
\varepsilon^{ji}_{\ell+1}({\bf0})\frac{\partial
\widehat{w}^i_{\ell+1}}{\partial\omega}(\lambda,\omega_{\ell})
\end{array}\right.
\end{equation}
with complex parameter $\lambda$. Solvability of the parameter
dependent boundary value problems were studied in \cite{KufSan1987}.
Roughly speaking, the operator pencil
$\widehat{\mathfrak{A}}_A(\lambda)$, associated with the parameter
dependent boundary value problem
\eqref{transmission_problem_fourier}, is an isomorphism for all
complex parameters $\lambda \in \mathbb{C}$ except at certain
isolated points -- the eigenvalues of
$\widehat{\mathfrak{A}}_A(\lambda)$ (for precise definition of
eigenvalues and corresponding eigensolutions we refer to monograph
\cite{KufSan1987}). As well-known, the regularity of the weak
solution $\bfu_{\ell}\in \mathbf{W}^{1,2}_{\ell}$ (the existence of
the strong solution, i.e. whether or not $\bfu_{\ell}\in
\mathbf{W}^{2,2}_{\ell}$) depends on the distribution of the
eigenvalues $\lambda$  of the operator pencil
$\widehat{\mathfrak{A}}_A(\lambda)$ in the strip ${\rm
Im}\,\lambda\in(-1,0)$. This assertion is expressed by the following
theorem, which is a classical result, see \cite{Dauge1988},
\cite{Kon1967}, \cite{KozMazRoss1997}, \cite{KozMazRoss2001},
\cite{KufSan1987}:
\begin{thm}[Regularity theorem in an infinite angle]\label{theorem pom 2}
Let  $\bfw_{\ell}\in\mathbf{W}^{1,2}_{\ell}(\mathcal{K}_{\ell})$  be
the uniquely determined weak solution of
\eqref{transmission_problem_angle}, $\bfF_{\ell} \in
\mathbf{L}_{\ell}^2(\mathcal{K}_{\ell})$,  $\bfG_{\ell} \in
\mathbf{W}_{\ell,\Gamma}^{1/2,2}(\mathcal{K}_{\ell})$. If the strip
${\rm Im}\,\lambda\in(-1,0)$ is free of eigenvalues of the operator
pencil $\widehat{\mathfrak{A}}_A(\lambda)$, then
$\bfw_{\ell}\in\mathbf{W}^{2,2}_{\ell}(\mathcal{K}_{\ell})$ and
\begin{equation*}
\|\bfw_{\ell}\|_{\mathbf{W}^{2,2}_{\ell}(\mathcal{K}_{\ell})} \leq c
 \left( \|\bfF_{\ell}\|_{\mathbf{L}_{\ell}^2(\mathcal{K}_{\ell})}+
 \|\bfG_{\ell}\|_{\mathbf{W}_{\ell,\Gamma}^{1/2,2}(\mathcal{K}_{\ell})}
 \right).
\end{equation*}
\end{thm}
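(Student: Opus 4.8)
The plan is to follow the classical Kondrat'ev scheme for elliptic boundary value problems in domains with conical points, adapted to the transmission setting as in \cite{NiSa1994a,Sandig2000,KufSan1987}. The reduction to the model problem \eqref{transmission_problem_fourier} has already been carried out in the preceding text: localization by the cut-off $\eta$, passage to polar coordinates $(r,\omega)$, the substitution $r=e^\xi$, and the complex Fourier transform $\mathcal{F}_{\xi\to\lambda}$ together amount to the Mellin transform, which converts the frozen-coefficient model problem \eqref{transmission_problem_angle} in the angle into the parameter-dependent family $\widehat{\mathfrak{A}}_A(\lambda)$ of one-dimensional boundary value / transmission problems on the cross-section $\omega\in(0,\omega_{\ell+1})$. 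The entire argument then takes place in the complex $\lambda$-plane.

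First I would record the Parseval identity for the Mellin transform, which yields, for the appropriate weighted Sobolev spaces $V^{l,2}_\beta(\mathcal{K}_A)$ in the angle, an equivalence between $\|\bfw\|_{V^{l,2}_\beta}$ and the integral of $\sum_{k\le l}|\lambda|^{2(l-k)}\|\partial_\omega^k\widehat{\bfw}(\lambda)\|_{L^2}^2$ over a horizontal line ${\rm Im}\,\lambda={\rm const}$ fixed by $\beta$. In the two-dimensional setting the spaces $\mathbf{W}^{1,2}_{\ell}$ and $\mathbf{W}^{2,2}_{\ell}$ near the vertex correspond to two such lines whose separation is exactly the strip ${\rm Im}\,\lambda\in(-1,0)$ appearing in the hypothesis.

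Next I would establish the a priori estimate for the transformed problem: using the ellipticity \eqref{lingen_par2} of the frozen-coefficient system, the angular family $\widehat{\mathfrak{A}}_A(\lambda)$ is elliptic with parameter, hence Fredholm and holomorphic in $\lambda$, and on any horizontal line free of its eigenvalues it is invertible with a bound uniform in ${\rm Re}\,\lambda$ carrying the natural powers of $|\lambda|$. This is where the transmission data enter: one must verify that the piecewise-constant angular operator, subject to the two interface conditions at $\omega=\omega_\ell$ and the two exterior conditions at $\omega=0,\omega_{\ell+1}$, still defines a well-posed holomorphic Fredholm pencil, which is the content adapted from \cite[Ch.~3--4]{NiSa1994a}.

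The decisive step is the contour-shift argument. The weak solution $\bfw\in\mathbf{W}^{1,2}_{\ell}$ is recovered by the inverse Mellin transform as an integral of $\widehat{\mathfrak{A}}_A(\lambda)^{-1}(\widehat{\bfF}_{\ell},\widehat{\bfG}_{\ell})$ along the line associated with the $\mathbf{W}^{1,2}$ weight, while the candidate $\mathbf{W}^{2,2}$ representation is the same integral along the line associated with the $\mathbf{W}^{2,2}$ weight. By Cauchy's theorem, shifting the contour across the strip ${\rm Im}\,\lambda\in(-1,0)$ alters the integral only by the residues of the integrand at the eigenvalues of $\widehat{\mathfrak{A}}_A(\lambda)$ located in that strip; since the hypothesis excludes such eigenvalues, the two representations coincide, which proves $\bfw_{\ell}\in\mathbf{W}^{2,2}_{\ell}(\mathcal{K}_{\ell})$ and yields the stated estimate. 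I expect this residue and Fredholm analysis of the transmission pencil to be the main obstacle; the remaining steps, namely undoing the transforms, removing $\eta$, treating the Lipschitz coefficients as a small perturbation of the frozen ones over the small support of the cut-off, and patching by a partition of unity, are routine and follow the cited references.
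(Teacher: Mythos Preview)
Your outline is correct and follows the classical Kondrat'ev/Mellin scheme that underlies the result. The paper, however, does not reproduce any of this: its entire proof is a direct citation of \cite[\S 7, Theorem 7.5]{KufSan1987} together with the asymptotic expansion \cite[\S 7, (7.10)]{KufSan1987}. In other words, the paper treats the theorem as a black-box consequence of known regularity theory in weighted spaces, whereas you have sketched the actual mechanism (Parseval for the Mellin transform, holomorphic Fredholm pencil on the cross-section, contour shift across the eigenvalue-free strip). Your account is therefore more informative but not a different route; it is the content of the reference the paper invokes.
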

\begin{proof}
Theorem \ref{theorem pom 2} is a consequence of \cite[\S 7, Theorem
7.5]{KufSan1987} and the expansion of the solution \cite[\S 7,
(7.10)]{KufSan1987}.
\end{proof}

%%%%%%%%%%%%%%%%%%%%%%%%%%%%%%%%%%%%%%%%%%%%%%%%%%%%%%%%%%%%%%%%%%%%%%%%%%%%%%%%

\subsubsection*{The characteristic determinants and the distribution of the
eigenvalues of $\widehat{\mathfrak{A}}_A(\lambda)$ }
Every $\lambda_0 \in \mathbb{C}$ such that ker
$\widehat{\mathfrak{A}}_A(\lambda_0)\neq \left\{ \bf0 \right\}$ is
said to be an eigenvalue of $\widehat{\mathfrak{A}}_A(\lambda)$. The
distribution of the eigenvalues of the operator
$\widehat{\mathfrak{A}}_A(\lambda)$ plays crucial role in the
regularity results of the solution, see Theorem \ref{theorem pom 2}.
In order to calculate the eigenvalues of the operator pencil
$\widehat{\mathfrak{A}}_A(\lambda)$ we look those $\lambda$, for
which there exists the nontrivial solution of the system
\eqref{transmission_problem_fourier} with the vanishing right-hand
side. The general solution $[\widehat{e}^1_{m},\widehat{e}^2_{m}]$
of the homogeneous equations
\begin{eqnarray*}
- \varepsilon^{11}_{m}({\bf0})\left(
(\mathrm{i}\lambda)^2\widehat{e}^1_{m} + \frac {\partial ^2
\widehat{e}^1_{m}}{\partial \omega ^2} \right) -
\varepsilon^{12}_{m}({\bf0})\left(
(\mathrm{i}\lambda)^2\widehat{e}^2_{m} + \frac {\partial ^2
\widehat{e}^2_{m}}{\partial \omega ^2} \right) &=& 0,
\\
- \varepsilon^{21}_{m}({\bf0})\left(
(\mathrm{i}\lambda)^2\widehat{e}^1_{m} + \frac {\partial ^2
\widehat{e}^1_{m}}{\partial \omega ^2} \right) -
\varepsilon^{22}_{m}({\bf0})\left(
(\mathrm{i}\lambda)^2\widehat{e}^2_{m} + \frac {\partial ^2
\widehat{e}^2_{m}}{\partial \omega ^2} \right) &=& 0,
\end{eqnarray*}
$m=\ell,\ell+1$, has the form (recall that $\varepsilon^{ji}_{m}$ is
a positive definite matrix)
\begin{equation}\label{general_solution}
\left\{
\begin{array}{lcl}
\widehat{e}^1_{\ell}&=& C_1 \cos(\mathrm{i}\lambda\omega)
+C_2\sin(\mathrm{i}\lambda\omega) \textmd{ for }
\omega\in(0,\omega_{\ell}),
\\
\widehat{e}^2_{\ell}&=& C_3 \cos(\mathrm{i}\lambda\omega)
+C_4\sin(\mathrm{i}\lambda\omega) \textmd{ for }
\omega\in(0,\omega_{\ell}),
\\
\widehat{e}^1_{\ell+1}&=& C_5 \cos(\mathrm{i}\lambda\omega)
+C_6\sin(\mathrm{i}\lambda\omega) \textmd{ for }
\omega\in(\omega_{\ell},\omega_{\ell+1}),
\\
\widehat{e}^2_{\ell+1}&=&C_7\cos(\mathrm{i}\lambda\omega)
+C_8\sin(\mathrm{i}\lambda\omega)\textmd{ for }
\omega\in(\omega_{\ell},\omega_{\ell+1}).
\end{array}
\right.
\end{equation}

The eigenvalues of $\widehat{\mathfrak{A}}_A(\lambda)$ are zeros of
the determinant $D_A(\lambda)$ of corresponding matrix of
coefficients $C_1,\dots,C_8$ (substituting the general solution
(\ref{general_solution}) to the corresponding boundary conditions
and transmission conditions, respectively, we get the homogeneous
linear system of eight equations with unknowns $C_1,\dots,C_8$).
Computation of $D_A(\lambda)$ %\eqref{det}
leads to the transcendent equation
\begin{equation}\label{determinant}
D_A(\lambda) =
D^{11}_A(\lambda)D^{22}_A(\lambda)-D^{12}_A(\lambda)D^{21}_A(\lambda)
=0,
\end{equation}
where
\begin{eqnarray*}
D^{11}_A(\lambda)&=&\varepsilon^{11}_{\ell}({\bf0})\sin(\mathrm{i}\lambda\omega_{\ell})
                    \cos[\mathrm{i}\lambda(\omega_{\ell+1}-\omega_{\ell})]
                    \\
                  &&+\varepsilon^{11}_{\ell+1}({\bf0})\cos(\mathrm{i}\lambda\omega_{\ell})
                  \sin[\mathrm{i}\lambda(\omega_{\ell+1}-\omega_{\ell})],
                    \\
D^{12}_A(\lambda)&=&\varepsilon^{12}_{\ell}({\bf0})\sin(\mathrm{i}\lambda\omega_{\ell})
                    \cos[\mathrm{i}\lambda(\omega_{\ell+1}-\omega_{\ell})]
                    \\
                  &&+\varepsilon^{12}_{\ell+1}({\bf0})\cos(\mathrm{i}\lambda\omega_{\ell})
                  \sin[\mathrm{i}\lambda(\omega_{\ell+1}-\omega_{\ell})],
                      \\
D^{21}_A(\lambda)&=&\varepsilon^{21}_{\ell}({\bf0})\sin(\mathrm{i}\lambda\omega_{\ell})
                    \cos[\mathrm{i}\lambda(\omega_{\ell+1}-\omega_{\ell})]
                    \\
                  &&+\varepsilon^{21}_{\ell+1}({\bf0})\cos(\mathrm{i}\lambda\omega_{\ell})
                  \sin[\mathrm{i}\lambda(\omega_{\ell+1}-\omega_{\ell})],
                      \\
D^{22}_A(\lambda)&=&\varepsilon^{22}_{\ell}({\bf0})\sin(\mathrm{i}\lambda\omega_{\ell})
                    \cos[\mathrm{i}\lambda(\omega_{\ell+1}-\omega_{\ell})]
                    \\
                  &&+\varepsilon^{22}_{\ell+1}({\bf0})\cos(\mathrm{i}\lambda\omega_{\ell})
                  \sin[\mathrm{i}\lambda(\omega_{\ell+1}-\omega_{\ell})].
\end{eqnarray*}
The roots of the equation $D_A(\lambda)=0$  are the eigenvalues of
$\widehat{\mathfrak{A}}_A(\lambda)$.

Taking into account the special type of geometry, namely
$\omega_{\ell}=\omega_{\ell+1}/2$, \eqref{determinant} simplifies into
\begin{multline}
\frac{1}{2}\left[(\varepsilon^{11}_{\ell}({\bf0})+\varepsilon^{11}_{\ell+1}({\bf0}))
(\varepsilon^{22}_{\ell}({\bf0})+\varepsilon^{22}_{\ell+1}({\bf0}))
\right.
\\
\left.
-(\varepsilon^{12}_{\ell}({\bf0})+\varepsilon^{12}_{\ell+1}({\bf0}))
(\varepsilon^{21}_{\ell}({\bf0})+\varepsilon^{21}_{\ell+1}({\bf0}))\right]
\sin(2\mathrm{i}\lambda\omega_{\ell})=0.
\end{multline}
Since both matrices, $\varepsilon^{ij}_{\ell}$ and
$\varepsilon^{ij}_{\ell+1}$, are considered to be positive definite,
we get
\begin{equation}
\sin(2\mathrm{i}\lambda\omega_{\ell})=0,
\end{equation}
from whence we obtain
$$
\mathrm{i}\lambda = \frac{k\pi}{2\omega_{\ell}}, \quad k\in
\mathbb{Z}.
$$
Now it is clear that for $\omega_{\ell}\in(0,\pi/2]$ there are no
roots of the equation $D_A(\lambda)=0$ such that ${\rm
Im}\,\lambda\in(-1,0)$.
\begin{crl}\label{corollary_appendix}
Let  $\bfu_{\ell} \in \mathbf{W}^{1,2}_{\ell}$  be the uniquely
determined weak solution of \eqref{transm_elliptic_system},
$\bff_{\ell} \in \mathbf{L}_{\ell}^2$,  $\bfg_{\ell} \in
\mathbf{W}_{\ell,\Gamma}^{1/2,2}$. Since the strip ${\rm
Im}\,\lambda\in(-1,0)$ is free of eigenvalues of the operator
$\widehat{\mathfrak{A}}_A(\lambda)$, we have
$\bfu_{\ell}\in\mathbf{W}^{2,2}_{\ell}$ and
\begin{equation*}
\|\bfu_{\ell}\|_{\mathbf{W}^{2,2}_{\ell}} \leq c
 \left( \|\bff_{\ell}\|_{\mathbf{L}_{\ell}^2}
 + \|\bfg_{\ell}\|_{\mathbf{W}_{\ell,\Gamma}^{1/2,2}} \right).
\end{equation*}
\end{crl}
\begin{proof}[Sketch of the proof]
The assertion follows from Theorem \ref{theorem pom 2} and the
determinant equation \eqref{determinant}. \eqref{determinant}
implies that for $\omega_{\ell+1}\leq\pi$,
$\omega_{\ell}=\omega_{\ell+1}/2$ there are no eigenvalues of the
operator $\widehat{\mathfrak{A}}_A(\lambda)$ in the strip ${\rm
Im}\,\lambda\in(-1,0)$. Hence
$\bfw_{\ell}\in\mathbf{W}^{2,2}_{\ell}(\mathcal{K}_{\ell})$. Now
consider the weak solution $\bfu_{\ell} \in \mathbf{W}^{1,2}_{\ell}$
of \eqref{transm_elliptic_system}.  Let $\mathcal{M}_{\ell}$ be the
set of all boundary corner points $A\in\partial\Omega_{\ell}$,
$\ell=1,\dots,M$. We have
\begin{eqnarray}
\bfu_{\ell} &=& \left(
1-\sum_{A\in\mathcal{M}_{\ell}}\eta_A\right)\bfu_{\ell}  +
\sum_{A\in\mathcal{M}_{\ell}}\eta_A\bfu_{\ell}
\\
&=&  \left( 1-\sum_{A\in\mathcal{M}_{\ell}}\eta_A\right)\bfu_{\ell}
+\sum_{A\in\mathcal{M}_{\ell}}\bfw_{\ell}.
\end{eqnarray}
The regularity of the first term on the right-hand side follows from
the interior regularity. The smoothness of the second term follows
from the regularity result in an infinite angle
$\mathcal{K}_{\ell}$, Theorem \ref{theorem pom 2}.
\end{proof}

%--------------------------------------------------------------------------------

%% The Appendices part is started with the command \appendix;
%% appendix sections are then done as normal sections
%% \appendix

%% \section{}
%% \label{}

%% References
%%
%% Following citation commands can be used in the body text:
%% Usage of \cite is as follows:
%%   \cite{key}         ==>>  [#]
%%   \cite[chap. 2]{key} ==>> [#, chap. 2]
%%

\subsection*{Acknowledgment}
This outcome has been achieved with the financial support of the
Ministry of Education, Youth and Sports of the Czech Republic,
project No. 1M0579, within activities of the CIDEAS research centre.
In addition, research of the first author was partly covered by the
grant 201/09/1544 (Czech Science Foundation). The work of the second
author was suppored by the Czech Science Foundation through projects
No.~103/08/1531 and No.~201/10/0357.
% ------------------------------------------------------------------------

%--------------------------------------------------------------------------------

% ------------------------------------------------------------------------

%% References with bibTeX database:

\bibliographystyle{elsarticle-num}
%\bibliography{<your-bib-database>}

%% Authors are advised to submit their bibtex database files. They are
%% requested to list a bibtex style file in the manuscript if they do
%% not want to use elsarticle-num.bst.

%% References without bibTeX database:

% \begin{thebibliography}{00}

%% \bibitem must have the following form:
%%   \bibitem{key}...
%%

% \bibitem{}

% \end{thebibliography}

\end{document}